\newcommand{\TT}[1]{{{\tt #1}}}
\renewcommand{\TT}[1]{}
\newtheorem{thm}{Theorem}
\newtheorem{lem}[thm]{Lemma}
\theoremstyle{definition}
\newtheorem{defn}[thm]{Definition}
\newtheorem{defns}[thm]{Definitions}
\newtheorem{example}[thm]{Example}
\theoremstyle{remark}
\newtheorem{rmk}[thm]{Remark}
\newtheorem{rmks}[thm]{Remarks}
\newcommand{\hd}{{\hat{d}}}
\newcommand{\hrho}{{\hat{\rho}}}
\newcommand{\tgamma}{{\tilde{\gamma}}}
\newcommand{\tpsi}{{\tilde{\psi}}}
\newcommand{\ttheta}{{\tilde{\theta}}}
\newcommand{\tQ}{{\widetilde{Q}}}
\newcommand{\te}{{\tilde{e}}}
\newcommand{\tp}{{\tilde{p}}}
\newcommand{\tq}{{\tilde{q}}}
\newcommand{\tv}{{\tilde{v}}}
\newcommand{\tx}{{\tilde{x}}}
\newcommand{\br}{{\bar r}}
\newcommand{\bh}{{\bar h}}
\newcommand{\bcV}{{\overline\cV}}
\newcommand{\brho}{{\bar{\rho}}}
\newcommand{\csph}{{\widehat{\C}}}
\newcommand{\I}{^{-1}}
\newcommand{\co}{\colon}
\newcommand{\llangle}{\left\langle}
\newcommand{\rrangle}{\right\rangle}
\newcommand{\sbs}{\subset}
\newcommand{\ol}[1]{\overline{#1}}
\newcommand{\interior}[1]{{\stackrel{\circ}{#1}}}
\newcommand{\opna}[1]{\operatorname{#1}}
\newcommand{\segpair}[2]{\left\langle #1,#2\right\rangle}
\newcommand{\ssegpair}[1]{\segpair{#1}{#1'}}
\newcommand{\meas}[1]{\rmm_G\left(#1\right)}
\newcommand{\er}[2]{[#1]_{#2}}
\newcommand{\erk}[1]{\er{#1}{k}}
\newcommand{\qr}{(q;r)}
\newcommand{\qt}{(q;t)}
\newcommand{\lr}{(\Lambda;r)}
\newcommand{\ql}{(q,\Lambda)}
\newcommand{\cmqr}{\cm\qr}
\newcommand{\cnqr}{\cn\qr}
\newcommand{\nccr}{\ncc(r)}
\newcommand{\NCL}{\NC(\Lambda)}
\newcommand{\PRq}{\PR(q)}
\newcommand{\CLqr}{\CL\qr}
\newcommand{\CBqr}{\CB\qr}
\newcommand{\CCqr}{\CC\qr}
\newcommand{\qrs}{(q;r,s)}
\newcommand{\qrt}{(q;r,t)}
\newcommand{\qs}{(q;s)}
\newcommand{\rmd}{\,\mathrm{d}}
\newcommand{\rmm}{\mathrm{m}}
\newcommand{\rmR}{{\mathrm{R}}}
\newcommand{\Int}{\operatorname{Int}}
\newcommand{\Area}{\operatorname{Area}}
\newcommand{\Ann}{\opna{Ann}}
\newcommand{\SL}{_\Lambda}
\newcommand{\DL}{D\SL}
\renewcommand{\mod}{\opna{mod}}
\newcommand{\diam}{\operatorname{diam}}
\newcommand{\PR}{{\opna{PR}\SL}}
\newcommand{\CB}{\operatorname{CB}\SL}
\newcommand{\CC}{\operatorname{CC}\SL}
\newcommand{\cn}{\operatorname{cn}\SL}
\newcommand{\cm}{\operatorname{cm}\SL}
\newcommand{\ncc}{\operatorname{ncc}\SL}
\newcommand{\NC}{\operatorname{NC}}
\newcommand{\CL}{\operatorname{C\Lambda}\SL}
\newcommand{\AnnL}{\Ann\SL}
\newcommand{\thor}{{\widetilde{\opna{h}}}{\opna{or}}}
\newcommand{\tHor}{{\widetilde{\opna{H}}}{\opna{or}}}
\newcommand{\tver}{{\widetilde{\opna{v}}}{\opna{er}}}
\newcommand{\tVer}{{\widetilde{\opna{V}}}{\opna{er}}}
\newcommand{\hor}{\opna{hor}}
\newcommand{\Hor}{\opna{Hor}}
\newcommand{\ver}{\opna{ver}}
\newcommand{\Ver}{\opna{Ver}}
\newcommand{\veps}{\varepsilon}
\newcommand{\vphi}{\varphi}
\newcommand{\cV}{{\mathcal{V}}}
\newcommand{\cP}{{\mathcal{P}}}
\newcommand{\R}{\mathbb{R}}
\newcommand{\N}{\mathbb{N}}
\newcommand{\C}{\mathbb{C}}
\newcommand{\pichere}[2]
{\begin{center}\includegraphics[width=#1\textwidth]{#2}\end{center}}
\newcommand{\lab}[3]{\psfrag{#1}[#3]{$\scriptstyle{#2}$}}
\author{Andr\'e de Carvalho} \address{Departamento de Matem\'atica
  Aplicada, IME - USP \\ Rua do Mat\~ao 1010, Cidade Universit\'aria
  \\ 05508-090 S\~ao Paulo, SP, Brazil}\email {andre@ime.usp.br}
\author{Toby Hall} \address{Department of Mathematical
  Sciences\\University of Liverpool\\Liverpool L69 7ZL, UK} \email
       {T.Hall@liv.ac.uk}
\title{Riemann surfaces out of paper}
\begin{document}
\maketitle

\begin{abstract}
Let~$S$ be a surface obtained from a plane polygon by identifying
infinitely many pairs of segments along its boundary. A condition is
given under which the complex structure in the interior of the polygon
extends uniquely across the quotient of its boundary to make~$S$ into
a closed Riemann surface. When this condition holds, a modulus of
continuity is obtained for a uniformizing map on~$S$.
\end{abstract}

\bibliographystyle{amsplain}

\section{Introduction}
\label{sec:introduction}

In the usual approach to their topological classification, surfaces
are constructed by making side identifications on plane polygons. Such
constructions produce surfaces with more than just a topological
structure. First, they are naturally metric spaces, with metrics that
are Riemannian and flat away from finitely many points: at the
exceptional points the metric fails to be Riemannian because there is
too much or too little angle. Thus, for example, when a genus two
surface is constructed from a regular octagon, all of the vertices of
the polygon are identified and the corresponding point on the surface
is a {\em cone} point with angle $6\pi$. Second, considering this
argument from the conformal rather than the metric standpoint, the
surfaces constructed in this way have a natural conformal structure:
at flat points this comes from the Euclidean structure, while at cone
points an appropriate fractional power can be used to introduce local
coordinates.

This idea can be generalized by constructing surfaces from plane
polygons with {\em infinitely} many pairs of segments along the
boundary identified. It is natural to ask whether the above discussion
carries through to yield a conformal structure on the quotient. The
fact that infinitely many segment pairs are identified allows the
possibility of {\em singular points} (such as accumulations of cone
points) in the quotient across which it is no longer evident that the
conformal structure extends. In addition, the set of singular points
may itself have some topological complexity -- for instance, it may be
a Cantor set. These questions are not only of interest in their own
right in the theory of Riemann surfaces, but have important
applications in dynamical systems theory, which was the original
motivation for this research. As explained below, surfaces constructed
in this manner play an important role in the study of the dynamics of
surface automorphisms, and the above questions are crucial for the
construction of limits in families of such automorphisms.

The first main theorem of the paper provides a sufficient condition
for the conformal structure to extend across the singular set, making
the quotient into a closed Riemann surface. This condition is that a
certain integral diverges at every point of the singular set. The
second main theorem uses the rate of divergence of the integral to
obtain a modulus of continuity for suitably normalized uniformizing
maps on the Riemann surfaces.

\medskip
To be more precise, let $P$ be a finite collection of disjoint
polygons in the complex plane. A {\em paper-folding scheme} is an
equivalence relation which glues together segments, possibly
infinitely many, along the boundary of $P$: the union of the segments
is required to have full measure. The image of $\partial P$ in the
quotient space $S$ is called the {\em scar\/}: it may contain {\em cone
  points}, where the total angle is not equal to~$2\pi$, and {\em
  singular points}, such as accumulations of cone points. Necessary
and sufficient conditions can be given for the quotient space to be a
surface, and the following statements summarize the main theorems of
this paper in the surface case.

\bigskip

\noindent\textbf{Conformal Structure Theorem
  (Theorem~\ref{thm:generalmain})} \,\, \emph{The natural conformal
  structure on the conic-flat part of $S$ extends uniquely to all
  of~$S$ provided that a certain integral diverges at each point of
  the singular set. }
\bigskip

It should be emphasized that this theorem only provides a sufficient
condition for the extension of the conformal structure across the
singular set. The authors do not know any specific examples of
surfaces formed in this way (with the singular set having zero
push-forward measure) for which the natural conformal structure does
not extend.

If the conditions of Theorem~\ref{thm:generalmain} are satisfied,
then~$S$ has the structure of a closed Riemann surface. In the case
where the paper-folding scheme is {\em plain}, $S$ is topologically a
sphere, and an explicit modulus of continuity of a suitably normalized
uniformizing map from~$S$ to the Riemann sphere is found. The modulus
of continuity is obtained by patching together local moduli of
continuity about each point of the surface -- these local moduli carry
over to Riemann paper surfaces other than spheres, and could be
used to construct global moduli in these cases also.

\bigskip

\noindent\textbf{Modulus of continuity of uniformizing map
  (Theorem~\ref{thm:mod-cont})} \,\, \emph{Suitably normalized
  uniformizing maps on Riemann paper spheres have moduli of continuity
  which depend only on the geometry of the polygons~$P$ and the metric
  on the scar.}

\bigskip

These results generalize and complete those set forth in~\cite{O1},
where only finite singular sets were considered.  There the
topological and metric structures on the quotients of paper foldings
were studied and theorems were proved which guarantee that, under
appropriate conditions, the quotient space is indeed a closed surface.
In~\cite{O1} the singular points, being isolated, could be considered
independently; the extension of the conformal structure and the local
modulus of continuity at a singular point were obtained from an
analysis of the geometry of a system of nested annuli zooming down to
the point in question. This approach is no longer viable in the case
of general singular sets, and is replaced in this paper with the
construction and analysis of more complicated systems of annuli: the
main tool used is a criterion of McMullen~\cite{Curt} for a set to
have absolute area zero (Theorem~\ref{thm:absareazero} below).

Thurston's pseudo-Anosov maps~\cite{Thu} provide a natural class of
examples of surface automorphisms which are defined on (finite) paper
surfaces: since every pseudo-Anosov admits a Markov partition, its
surface of definition can be constructed from the disjoint union of
Markov rectangles by identifying segments along their boundaries. The
results of this paper give a means to construct limits of sequences
of pseudo-Anosovs defined on the same underlying topological surface,
but with puncture sets of varying cardinality. 

In one case which motivated this work, the pseudo-Anosovs are the
canonical Thurston representatives of the isotopy classes of Smale's
horseshoe map~\cite{Smale} of the sphere relative to certain periodic
orbits. Provided that the conditions of Theorem~\ref{thm:mod-cont}
apply uniformly along the sequence, the suitably normalized
uniformizing maps from the abstract spheres of definition of the
pseudo-Anosovs to the Riemann sphere have a uniform modulus of
continuity. This enables Arzel\`a-Ascoli type arguments to be used to
constuct limiting automorphisms of the Riemann sphere. In such
examples the number of identifications along the boundaries of the
polygons is finite, although unbounded along the sequence of
pseudo-Anosovs: similar techniques can also be applied to the {\em
  generalized pseudo-Anosovs} of~\cite{ugpa}, for which an infinite
number of identifications is required.

\medskip
Section~\ref{sec:background} contains a brief summary of necessary
definitions and results from metric geometry and geometric function
theory. The definition of paper surfaces, together with some of their
key properties, is given in Section~\ref{sec:paper-surfaces}.

In Section~\ref{sec:conf-struct-paper}, a sufficient condition for the
conformal structure on the set of non-singular points of a paper
surface to extend over an arbitrary singular set is given
(Theorem~\ref{thm:generalmain}).

In Section~\ref{sec:modulus-continuity}, attention is restricted to
Riemann paper spheres, and the modulus of continuity of a suitably
normalized isomorphism to the Riemann sphere is studied. An explicit
modulus of continuity is given at each point of the paper sphere
(Theorem~\ref{thm:local-modulus-continuity}), and it is shown that
these give rise to a global modulus of continuity
(Theorem~\ref{thm:mod-cont}).

\bigskip\noindent {\bf Acknowledgements:} The authors would like to
thank Lasse Rempe for his very helpful comments on a draft of this
paper. They gratefully acknowledge the support of FAPESP
grant 2006/03829-2. The first author would also like to
acknowledge the support of CNPq 
grant 151449/2008-2 and the hospitality
of IMPA,
where part of this work was developed.

\section{Background}
\label{sec:background}

In this section some necessary background from metric geometry and
geometric function theory is briefly summarised. The book of Burago,
Burago, and Ivanov~\cite{BBI} is an excellent reference for the
former; and the texts of Ahlfors~\cite{Ah}, Ahlfors-Sario~\cite{AhSa},
and Lehto-Virtanen~\cite{LeVi} are the classic references for the
latter.

\subsection{Metric spaces: quotient metrics and intrinsic metrics}
\label{sec:metr-spac-quot}

Let~$(X,d)$ be a metric space: it is convenient to take $d\co X\times
X\to\R_{\ge 0}\cup\{\infty\}$, so that two points can infinitely
distant from each other. If $x\in X$ and~$\Lambda$ is a compact
subset of~$X$, then the notation~$d(x,\Lambda)$ is used to denote the
distance from~$x$ to the closest point of~$\Lambda$. The open and
closed balls of radius~$r$ about~$\Lambda$ are denoted
\begin{eqnarray*}
B_X(\Lambda;r) &=& \{y\in X\,:\, d(y,\Lambda)<r\} \qquad\text{and}\\
\ol{B}_X(\Lambda;r) &=& \{y\in X\,:\, d(y,\Lambda)\le r\}.
\end{eqnarray*}

Let~$\rmR$ be a relation on~$X$ (which in this paper will always be
reflexive and symmetric). The {\em
  quotient metric space of~$X$ under~$\rmR$} is the
quotient~$(X/d^{\rmR}, d^{\rmR})$ of~$X$ by the greatest
semi-metric~$d^{\rmR}$ on~$X$ for which $d^{\rmR}(x,y)=0$ whenever
$x\rmR y$, and $d^{\rmR}(x,y)\le d(x,y)$ for all~$x,y\in X$
(so~$d^{\rmR}$ is the supremum of all semi-metrics which satisfy these
conditions).

The length of a path $\gamma\co[a,b]\to X$ is defined to be
\[|\gamma|_X = \sup \left\{
\sum_{i=1}^{k-1} d(\gamma(t_i), \gamma(t_{i+1}))
\right\} \in\R_{\ge 0}\cup\{\infty\},
\]
where the supremum is over all finite increasing
sequences~$t_1\le t_2\le \cdots\le t_k$
in~$[a,b]$. If~$|\gamma|_X<\infty$ then~$\gamma$ is said to be {\em
  rectifiable}. 

The metric~$d$ on~$X$ is said to be {\em intrinsic} if the distance
between any two points is arbitrarily well approximated by the lengths
of paths joining them; and to be {\em strictly intrinsic} if the
distance between any two points is equal to the length of a path
joining them. It can be shown that a compact intrinsic metric is
strictly intrinsic.

If~$d$ is not an intrinsic metric, then there is an induced intrinsic
metric $\hd$ on~$X$ defined by
\[\hd(x,y) = \inf\{
|\gamma|_X\,:\,\gamma \text{ is a path from~$x$ to~$y$}
\}
\]
(and $\hd(x,y)=\infty$ if $x$ and $y$ lie in different path components
of~$X$). 
For example, if~$P\sbs\C$ is a polygon, then there is an intrinsic
metric~$d_P$ on~$P$ induced by the Euclidean metric on~$\C$, which
does not in general agree with the subspace metric on~$P$.

A proof of the following result can be found on pp. 62\,--\,63
of~\cite{BBI}.

\begin{lem}
\label{lem:intrinsic-quotient is intrinsic}
Let~$(X,d)$ be an intrinsic metric space, and~$\rmR$ be a relation
on~$X$. Then the quotient metric space~$(X/d^{\rmR}, d^{\rmR})$ is
also intrinsic.
\end{lem}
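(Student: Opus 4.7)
\emph{Proof plan.} The plan is to exploit an explicit chain description of $d^{\rmR}$, and then to lift intrinsic approximating paths from $X$ up to the quotient by concatenation.

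First, I would establish the formula
\[
d^{\rmR}(x,y) \;=\; \inf \sum_{i=1}^n d(p_i, q_i),
\]
where the infimum ranges over all finite chains $x = p_1, q_1, p_2, q_2, \ldots, p_n, q_n = y$ with $q_i \,\rmR\, p_{i+1}$ for each $1 \le i \le n-1$. The right-hand side is symmetric (by symmetry of $\rmR$), satisfies the triangle inequality (by chain concatenation), is dominated by $d$ (take $n=1$), and vanishes on $\rmR$-related pairs (use the chain $x, x, y, y$). Conversely, any semi-metric $\sigma$ with these properties satisfies $\sigma(x,y) \le \sum_i \sigma(p_i, q_i) \le \sum_i d(p_i, q_i)$ along every such chain, so the chain infimum is itself the maximal such semi-metric, and hence equals $d^{\rmR}$.

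Next, fix $[x], [y]$ in the quotient with $d^{\rmR}([x],[y]) < \infty$ (the infinite case is automatic) and $\varepsilon > 0$. Choose representatives and a chain $x = p_1, q_1, \ldots, p_n, q_n = y$ whose total length is less than $d^{\rmR}([x],[y]) + \varepsilon/2$. Since $(X,d)$ is intrinsic, for each $i$ select a path $\gamma_i$ in $X$ from $p_i$ to $q_i$ with $|\gamma_i|_X < d(p_i, q_i) + \varepsilon/(2n)$. The quotient projection $\pi\co X \to X/d^{\rmR}$ is $1$-Lipschitz, so each image satisfies $|\pi \circ \gamma_i|_{X/d^{\rmR}} \le |\gamma_i|_X$. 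Because $q_i \,\rmR\, p_{i+1}$ forces $\pi(q_i) = \pi(p_{i+1})$, the paths $\pi \circ \gamma_i$ concatenate into a single path $\Gamma$ in $X/d^{\rmR}$ from $[x]$ to $[y]$ satisfying $|\Gamma|_{X/d^{\rmR}} < d^{\rmR}([x],[y]) + \varepsilon$. Since $\varepsilon$ is arbitrary, $d^{\rmR}$ is intrinsic.

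The chief obstacle is the careful justification of the chain formula for $d^{\rmR}$ through the maximality property that characterises it; once this is in place, the concatenation argument is routine, and in particular the $1$-Lipschitz property of $\pi$ makes the translation of path lengths across the quotient essentially automatic.
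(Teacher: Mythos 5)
Your proof is correct, and it is essentially the argument the paper points to: the paper gives no proof of its own but cites Burago--Burago--Ivanov (pp.~62--63), where the quotient semi-metric is identified with exactly this chain infimum and intrinsicness is obtained by projecting and concatenating approximating paths, using that $\pi$ is $1$-Lipschitz and that $\rmR$-related chain points have the same image. So your proposal reproduces the cited proof faithfully, including the key step of characterising $d^{\rmR}$ via its maximality property.
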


\subsection{Background geometric function theory}

\begin{defns}[Module of an annular region, concentric and nested annular regions]
\label{defns:annulus-stuff}
An {\em annular region} in a surface is a subset homeomorphic to an
open round annulus
\[A(r_1,r_2) = \{z\in\C\,:\, r_1<|z|<r_2\},\]
where $0\le r_1<r_2\le\infty$. If~$A\sbs\C$ is an annular region,
then there is a conformal map taking~$A$ onto some round
annulus~$A(r_1,r_2)$, and this map is unique up to postcomposition
with a homothety. The ratio $r_2/r_1$ is therefore a conformal
invariant of~$A$, and the {\em module} $\mod A$ of~$A$ is defined by
\[
\mod A = 
\left\{
\begin{array}{ll}
\infty & \text{ if }r_1=0 \text{ or }r_2=\infty,\\
\frac{1}{2\pi} \ln\frac{r_2}{r_1} \quad & \text{ otherwise.}
\end{array}
\right.
\]

Suppose that~$D$ is a closed topological disk and that~$A\sbs D$ is an
annular region. The {\em bounded component} of~$D\setminus A$ is the
one which is disjoint from~$\partial D$.  

Annular regions $A_0,A_1\sbs D$ are {\em concentric} if the bounded
complementary component of one is contained in the bounded
complementary component of the other. They are {\em nested} if one is
entirely contained in the bounded complementary component of the
other.
\end{defns}

The following fundamental inequality will be important:
\begin{thm}[Additivity of the module]
\label{thm:additivity-module}
If~$A_n$ is a finite or countable family of nested annular regions all
contained in and concentric with the annular region~$A$, then
\[\mod A \ge \sum \mod A_n.\]
\end{thm}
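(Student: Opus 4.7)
The plan is to invoke the classical extremal-length characterization of the module: for any annular region~$B$,
\[
\mod B \;=\; \inf_\rho \int_B \rho^2\,dA,
\]
the infimum being taken over non-negative Borel functions~$\rho$ on~$B$ for which $\int_\gamma \rho\,|dz|\ge 1$ for every locally rectifiable curve $\gamma\sbs B$ that separates the two boundary components of~$B$ (equivalently, is homotopic to the core of~$B$). This characterization is conformally invariant, and a direct calculation with $\rho = 1/(2\pi|z|)$ on the round model $A(r_1,r_2)$ verifies that the infimum equals $(1/2\pi)\log(r_2/r_1)$.

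With this in hand, the proof rests on two topological observations. First, because the family $\{A_n\}$ is nested, the annuli are pairwise disjoint: each $A_n$ lies in the bounded complementary component of another, which by definition misses that other annulus. Second, each $A_n$ being contained in and concentric with~$A$ forces the bounded complementary component of~$A_n$ in~$D$ to contain that of~$A$; consequently, any loop in~$A_n$ separating the two boundary components of~$A_n$ also encircles the bounded complementary component of~$A$, and so separates the two boundary components of~$A$ as well.

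The length-area estimate is then immediate. Let~$\rho$ be any admissible metric for~$A$. By the second observation, the restriction $\rho|_{A_n}$ is admissible for~$A_n$, and hence $\int_{A_n}\rho^2\,dA \ge \mod A_n$. Summing, and using disjointness (the first observation) in the first inequality,
\[
\int_A \rho^2\,dA \;\ge\; \sum_n \int_{A_n}\rho^2\,dA \;\ge\; \sum_n \mod A_n.
\]
Taking the infimum over admissible~$\rho$ yields $\mod A\ge\sum_n \mod A_n$. The countable case reduces to the finite one by truncation, since the partial sums are increasing and bounded above by the left-hand side.

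There is no serious obstacle: this is a classical inequality. The one point demanding care is the inheritance of admissibility under restriction to a sub-annulus, which is precisely where \emph{concentricity}---rather than mere containment---enters, since without it a separating curve in $A_n$ need not separate the two boundary components of~$A$.
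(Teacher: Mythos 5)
The paper gives no proof of Theorem~\ref{thm:additivity-module}: it is quoted as a classical fact (the Gr\"otzsch superadditivity, or ``serial rule''), so there is no in-paper argument to compare yours against. Your proof is the standard extremal-length argument and it is correct: the paper's definition of \emph{nested} gives pairwise disjointness of the $A_n$; concentricity together with $A_n\sbs A$ forces the bounded complementary component of~$A$ to lie inside that of each~$A_n$, so every closed curve in~$A_n$ separating its boundary components also separates those of~$A$; hence the restriction of any admissible metric for~$A$ is admissible for each~$A_n$, and summing the area integrals over the disjoint $A_n$ gives the inequality, with the countable case following from the finite one (or directly from countable additivity of the integral). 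Two small remarks. First, exhibiting $\rho=1/(2\pi|z|)$ on the round model only bounds the infimum from above; the identification of the infimum with $\frac{1}{2\pi}\ln(r_2/r_1)$ also requires the Cauchy--Schwarz length--area lower bound, which is part of the classical characterization you invoke, so nothing is broken, but the phrase ``a direct calculation verifies that the infimum equals'' overstates what that single computation shows. Second, note that the inequality~(\ref{eq:modest}) displayed in the paper is the dual formulation, via curves \emph{joining} the two boundary components; your choice of the \emph{separating}-curve family is the right one for superadditivity, precisely because (as you observe) admissibility survives restriction to a concentric sub-annulus, which is where concentricity rather than mere containment is used.
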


A {\em conformal metric} on $\C$ is a metric obtained defining the
length of arcs by $|\gamma|_\nu:=\int_\gamma\nu(z)|\!\rmd z|$, where
$\nu$ is a nonnegative Borel measurable function defined on $\C$. If
$A$ is an annular region in~$\C$ whose boundary components are
$C_1,C_2$ then
\[\mod A= \sup\left\lbrace\dfrac{d_\nu(C_1,C_2)^2}{\Area_\nu(A)}\,:\, 
\nu(z)|\!\rmd z| \text{ is a conformal metric}\right\rbrace,\] where
$d_\nu(C_1,C_2)$ is the $\nu$-distance between the boundary components
of $A$ (i.e., the minimum $\nu$-length of an arc with endpoints
in~$C_1$ and~$C_2$) and $\Area_\nu(A)=\iint_A\nu(z)^2\rmd x\rmd y$ is
the $\nu$-area of~$A$.  In particular, given any conformal metric
$\nu(z)|\!\rmd z|$, 
\begin{equation}\label{eq:modest}
\mod A\geq \dfrac{d_\nu(C_1,C_2)^2}{\Area_\nu(A)}.
\end{equation}

The following theorem provides an upper bound on the module of an
annulus. 

\begin{thm}[Gr\"otzsch Annulus Theorem]
\label{thm:GAT}
Let~$A$ be an annular region contained in the disk~$|z|\le R$ in~$\C$,
and suppose that the points~$z_1$ and~$z_2$ are contained in the
bounded complementary component of~$A$. Then
\[\mod A \le \frac{1}{2\pi} \ln\frac{8R}{|z_1-z_2|}.\]
\end{thm}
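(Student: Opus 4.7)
The statement is the classical Gr\"otzsch Annulus Theorem. The plan is a two-step argument: first reduce by normalization and circular symmetrization to the extremal Gr\"otzsch annulus in the unit disk, then bound the module of that annulus.

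For the reduction, note first that since the unbounded component of $\C\setminus A$ contains $\{|z|>R\}$, the bounded component $K\ni z_1,z_2$ lies in $\ol{B}(0,R)$. Translating by $-z_1$ puts $K$ inside $\ol{B}(0,2R)$, with $0$ and a point of modulus $|z_1-z_2|$ in it; and rescaling by $1/(2R)$, a module-preserving conformal change, reduces to the case $A\sbs\{|w|<1\}$ whose bounded complementary component contains $0$ and a point of modulus $t:=|z_1-z_2|/(2R)$, with the desired inequality now taking the form $\mod A\le\tfrac{1}{2\pi}\log(4/t)$. A standard circular symmetrization argument (P\'olya--Szeg\H{o}) then shows that among all such annuli the module is maximized by the \emph{Gr\"otzsch annulus} $G_t:=\{|w|<1\}\setminus[0,t]$, whose module I denote $\mu(t)$, so $\mod A\le\mu(t)$.

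It then remains to prove $\mu(t)\le\tfrac{1}{2\pi}\log(4/t)$ uniformly in $t\in(0,1)$. The length-area inequality~\eqref{eq:modest} yields only \emph{lower} bounds on annular modules, so this upper bound must come either from extremal length applied to the family of separating curves in $G_t$ (using a conformal metric under which every curve winding once around $[0,t]$ is long while the total $\nu$-area stays controlled), or from an explicit uniformization. The cleanest route is the latter: a Schwarz--Christoffel integral uniformizing $G_t$ identifies $\mu(t)$ with a ratio of complete elliptic integrals of the first kind in an appropriate modular parameter $k=k(t)$, and the classical asymptotic $\mu(t)=\tfrac{1}{2\pi}\log(4/t)+o(1)$ as $t\to 0^+$ is in fact a uniform upper bound on $(0,1)$. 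Combining with the reduction yields $\mod A\le\tfrac{1}{2\pi}\log\frac{8R}{|z_1-z_2|}$.

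The main obstacle is this last step: upper-bounding an annular module is intrinsically harder than lower-bounding it, because the natural length-area argument runs the wrong way. The sharp constant $4$ (equivalently the $8R$ in the statement) is tight, and its cleanest derivation uses the explicit elliptic-integral identity together with the extremal property of $G_t$. In practice one simply cites this form of the theorem from the standard references -- for instance Ahlfors~\cite{Ah} or Lehto--Virtanen~\cite{LeVi} -- rather than re-deriving it here.
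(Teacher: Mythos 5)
Your proposal is correct: the normalization to the unit disk, the reduction to the Gr\"otzsch annulus by symmetrization/extremality, and the classical bound $\mu(t)\le\log(4/t)$ (giving the constant $8R$) is the standard derivation, and your minor omissions (a rotation to put the second point on the positive axis, and that the $\log(4/t)$ bound is a genuine inequality on all of $(0,1)$, not just an asymptotic) are harmless. This matches the paper's treatment, which states Theorem~\ref{thm:GAT} as classical background with no proof, deferring exactly as you do to the standard references such as Ahlfors and Lehto--Virtanen.
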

\qed

A proof of the following straightforward distortion theorem can be
found in~\cite{O1}. 

\begin{thm}\label{thm:Koebedistn}
Let $P\sbs\C$ be a closed topological disk, $p\in\Int(P)$ and
$\Phi\co \Int(P)\to\C$ be a univalent (i.e.\ injective and
holomorphic) function with $\Phi(p)=0$ and $\Phi'(p)=1$. Let
$Q(h)$ denote the closed interior collar neighborhood of $\partial
P$ of $d_\C$-width~$h>0$ and set $P_h:=P\setminus Q(h)$. Assume that
$h$ is small enough that $P_h$ is path connected and $p\in P_h$, and
set
\[\kappa:=\exp\left(\frac{8\diam_{P_h}P_h}{h}\right),\]
where $\diam_{P_h}P_h$ denotes the diameter of $P_h$ in the intrinsic
metric $d_{P_h}$ on $P_h$ induced by~$d_\C$.
Then $\Phi$ is $\kappa$-biLipschitz in $P_h$.
\end{thm}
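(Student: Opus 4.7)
The plan is to reduce the biLipschitz assertion to a uniform two-sided pointwise bound on $|\Phi'|$ throughout $P_h$, obtained by combining the classical Bieberbach coefficient estimate on Euclidean disks of radius $h$ with a chaining argument along paths in $P_h$.

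For any $z_0\in P_h$, the open disk $D(z_0,h)$ is contained in $\Int(P)$ by the definition of the collar $Q(h)$, so $\Phi$ restricts there to a univalent function. Rescaling and normalizing via
\[
\zeta\longmapsto\frac{\Phi(z_0+h\zeta)-\Phi(z_0)}{h\,\Phi'(z_0)},
\]
the Bieberbach bound $|a_2|\le 2$ for univalent functions on the unit disk translates into the pointwise logarithmic derivative estimate $|\Phi''(z_0)/\Phi'(z_0)|\le C/h$ at every $z_0\in P_h$, with $C$ an absolute constant (its sharp value being $4$). Since $P_h$ is path connected and $p\in P_h$, for any $z\in P_h$ one can choose a rectifiable path $\gamma\sbs P_h$ from $p$ to $z$ of length arbitrarily close to $d_{P_h}(p,z)\le\diam_{P_h}P_h$. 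The normalization $\Phi'(p)=1$ combined with
\[
\log\Phi'(z)=\int_\gamma\frac{\Phi''(w)}{\Phi'(w)}\,dw
\]
then pins $|\Phi'|$ between $\exp(\pm C\diam_{P_h}P_h/h)$ throughout $P_h$, on taking real parts.

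Given this two-sided bound on $|\Phi'|$, for arbitrary $z_1,z_2\in P_h$ the upper Lipschitz estimate follows by integrating $|\Phi'|$ along a near-geodesic path in $P_h$, and the matching lower estimate follows by the symmetric argument applied to $\Phi\I$ on $\Phi(P_h)$, whose derivative is bounded by the same constant. Combining the two one-sided estimates and reconciling the intrinsic metrics on $P_h$ and on $\Phi(P_h)$ yields the claimed $\kappa=\exp(8\diam_{P_h}P_h/h)$ once all constants have been tracked. The analytic content is entirely classical; the only mild obstacle is the bookkeeping needed to extract the exact exponent $8$ from the metric conventions and the rescaled Bieberbach inequality.
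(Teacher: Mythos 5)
The paper itself does not prove this theorem (it defers to~\cite{O1}), so I can only assess your argument on its own terms. The first half is fine: for $z_0\in P_h$ the disk $D(z_0,h)$ lies in $\Int(P)$, the rescaled Bieberbach bound gives $|\Phi''(z_0)/\Phi'(z_0)|\le 4/h$, and chaining from $p$ along near-geodesics of $P_h$ (using $\Phi'(p)=1$ and $d_{P_h}(p,z)\le\diam_{P_h}P_h$) pins $|\Phi'|$ between $\exp(\pm 4\diam_{P_h}P_h/h)$; integrating $|\Phi'|$ along near-geodesics then gives the upper Lipschitz bound with constant well inside $\kappa$.

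The gap is in the lower bound. You propose ``the symmetric argument applied to $\Phi^{-1}$ on $\Phi(P_h)$,'' but the symmetry is broken: to convert the bound $|(\Phi^{-1})'|=1/|\Phi'|\le e^{4\diam_{P_h}P_h/h}$ on $\Phi(P_h)$ into the estimate $|z_1-z_2|\le \kappa\,|\Phi(z_1)-\Phi(z_2)|$ you must integrate $(\Phi^{-1})'$ along a path joining $\Phi(z_1)$ to $\Phi(z_2)$ whose length is comparable to the \emph{Euclidean} distance $|\Phi(z_1)-\Phi(z_2)|$ and which stays where the derivative bound holds. The straight segment may leave $\Phi(P_h)$, indeed leave $\Phi(\Int(P))$ altogether, and the intrinsic distance in $\Phi(P_h)$ is not a priori comparable to the Euclidean distance there -- $\Phi(P_h)$ is an uncontrolled domain, and no collar-width information is available in the image to rerun the Bieberbach/chaining step for $\Phi^{-1}$. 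So ``reconciling the intrinsic metrics on $P_h$ and on $\Phi(P_h)$'' is not bookkeeping; it is essentially the statement being proved. The standard repair avoids $\Phi^{-1}$ entirely: for $|z_1-z_2|<h$ apply the Koebe growth/distortion theorem to $\Phi$ on $D(z_1,h)$, giving $|\Phi(z_1)-\Phi(z_2)|\ge c\,|\Phi'(z_1)|\,|z_1-z_2|$; for $|z_1-z_2|\ge h$ use injectivity together with the Koebe one-quarter theorem, which forces $\Phi(z_2)$ outside $\Phi(D(z_1,h))\supset D\bigl(\Phi(z_1),\tfrac{h}{4}|\Phi'(z_1)|\bigr)$, so $|\Phi(z_1)-\Phi(z_2)|\ge \tfrac{h}{4}e^{-4\diam_{P_h}P_h/h}$ while $d_{P_h}(z_1,z_2)\le\diam_{P_h}P_h$; elementary inequalities then absorb the factor $h/(4\diam_{P_h}P_h)$ into the exponent, which is where the stated constant $8$ (rather than $4$) comes from. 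With this replacement of the lower-bound step your outline becomes a complete proof.
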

\qed

A compact subset $E\sbs\C$ whose complement $W_E:=\C\setminus E$ is
connected has {\em absolute area zero} if, for any univalent
map $f\co W_E\to\C$, the area of $\C\setminus f(W_E)$ is
zero. In terms of the classification of Riemann surfaces, this is
equivalent to saying that $W_E\in O_{AD}$, the class of Riemann
surfaces whose only analytic functions with bounded Dirichlet integral
are the constants (Theorem IV.2B, p.\ 199 of~\cite{AhSa}). This
implies, in particular, the following {\em removability} theorem (see
Theorem IV.4B, p.\ 201 of~\cite{AhSa}):

\begin{thm}\label{thm:removability1} 
Let $\Omega\sbs\C$ be a domain and $E\sbs\Omega$ be a subset with
absolute area zero such that $\Omega\setminus E$ is also a
domain. Then any bounded univalent map $g\co\Omega\setminus E\to\C$
extends uniquely to all of~$\Omega$ as a bounded univalent map.
\end{thm}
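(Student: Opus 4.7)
The argument splits into uniqueness, which is immediate, and existence, which is where the absolute area zero hypothesis does the work. For uniqueness, any two bounded univalent extensions $\tilde g_1,\tilde g_2\co\Omega\to\C$ are holomorphic, hence continuous, and agree on the open dense subset $\Omega\setminus E$; by continuity they therefore agree on all of $\Omega$.

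For existence, my first move is to localise. By uniqueness, if $g$ extends across each compact set $E\cap\ol D$ for $D$ a small closed topological disk in $\Omega$, then the local extensions agree on overlaps and patch to a global extension. After shrinking, I may assume that $\Omega$ is a topological disk and that $E$ is a compact subset with $\C\setminus E$ connected, placing me precisely in the setting in which the definition of absolute area zero applies.

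The crucial step is to show that $g$ extends continuously across $E$. For each $p\in E$, consider the cluster set $C(p)\sbs\ol{g(\Omega\setminus E)}$. If some $C(p)$ contained more than one point, I would exploit the boundedness and univalence of $g$ to manufacture a univalent function on $W_E=\C\setminus E$ whose image omits a set of positive area: essentially, one arranges the complementary component of $g(\Omega\setminus E)$ associated to $E$ to have positive area, which contradicts $W_E\in O_{AD}$ through the very definition of absolute area zero. Hence each $C(p)$ is a singleton and $g$ extends continuously to a map $\tilde g\co\Omega\to\C$.

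To finish, I would upgrade $\tilde g$ from continuous to holomorphic and univalent. Applying the absolute area zero property to the identity shows that $E$ itself has planar Lebesgue measure zero, and a bounded continuous map that is holomorphic off such a set is holomorphic everywhere by the classical Morera/Painlev\'e removability argument. Univalence of $\tilde g$ then follows from univalence of $g$ together with the singleton property of the cluster sets, because if $\tilde g(p)=\tilde g(q)$ for distinct $p,q\in\Omega$ one could perturb into $\Omega\setminus E$ and contradict injectivity of $g$. The main obstacle is the cluster-set step: converting the hypothetical nondegeneracy of some $C(p)$ into an explicit violation of $W_E\in O_{AD}$ requires a careful use of univalence, and this is precisely where the full force of the absolute area zero hypothesis is brought to bear.
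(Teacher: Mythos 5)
The paper does not actually prove this statement: it is quoted from Ahlfors--Sario \cite{AhSa} (Theorem IV.4B, via the equivalence of absolute area zero with $W_E\in O_{AD}$, Theorem IV.2B), so your proposal has to be measured against that standard argument, and it has a genuine gap exactly where you flag "the main obstacle". The cluster-set step is not a technical refinement to be supplied later; it is the entire content of the theorem, and the strategy you sketch cannot close it as stated. Absolute area zero is, by definition, a statement about univalent maps defined on \emph{all} of $W_E=\C\setminus E$, whereas $g$ lives only on $\Omega\setminus E$; "arranging the complementary component of $g(\Omega\setminus E)$ associated to $E$ to have positive area" contradicts nothing, because you never manufacture from $g$ a univalent map on $\C\setminus E$, and there is no reason such an extension of $g$ beyond $\Omega$ should exist. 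The standard proof sidesteps this by working with the Dirichlet integral rather than cluster sets: since $g$ is bounded and univalent, $\iint_{\Omega\setminus E}|g'|^2\,\rmd x\,\rmd y=\Area(g(\Omega\setminus E))<\infty$, so $g$ is an $AD$ function; compact sets whose complement belongs to $O_{AD}$ are removable singularities for analytic functions of finite Dirichlet integral (this is the substance of \cite{AhSa}, IV.4B), and univalence of the extension then follows from an argument-principle/Hurwitz argument using that $E$ is totally disconnected with empty interior.

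Your final step is also unjustified as stated: "a bounded continuous map holomorphic off a set of zero area is holomorphic everywhere" is not a classical Morera/Painlev\'e fact. Painlev\'e's theorem concerns sets of zero \emph{length} and bounded functions; a Morera-type contour argument fails because covering $E$ by squares of small total area gives no control on the total boundary length; and the statement itself is false for general zero-area sets -- there are compact sets of planar measure zero (even Jordan curves of Hausdorff dimension one, by Bishop's constructions) supporting non-M\"obius homeomorphisms of the sphere that are conformal off the set, hence non-removable even for continuous functions. So even granting singleton cluster sets, you would still need the full force of the $O_{AD}$ hypothesis, not merely $|E|=0$. The easy parts of your proposal -- uniqueness by density, and the localization to a compact $E$ with connected complement (absolute area zero passes to compact subsets and forces $E$ to be totally disconnected with measure zero) -- are fine, but they are not where the difficulty lies.
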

\qed

The following criterion for a set to have absolute area zero
is due to McMullen~\cite{Curt}:
\begin{thm}\label{thm:absareazero}
Let $U_1,U_2,\ldots$ be a sequence of disjoint open sets in $\C$
satisfying the following conditions:
\begin{enumerate}[a)]
\item $U_n$ is a finite union of disjoint unnested annular regions of
  finite module;
\item any component of $U_{n+1}$ is nested inside some component of
  $U_n$;
\item if $\{A_n\}$ is any nested sequence of annular regions,
  where $A_n$ is a component of $U_n$, then $\sum_n\mod(A_n)=\infty$.
\end{enumerate}
Let $E_n$ denote the union of the bounded components of $\C\setminus
U_n$ and set $E:=\bigcap_n E_n$. Then $E$ is a totally disconnected
compact set of absolute area zero. 
\end{thm}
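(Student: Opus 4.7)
The plan is to verify compactness, total disconnectedness, and absolute area zero in turn, with the combination of additivity of the module (Theorem~\ref{thm:additivity-module}) and the Gr\"otzsch Annulus Theorem (Theorem~\ref{thm:GAT}) as the key tool at every step. \emph{Compactness} is immediate: by condition~(a) each $E_n$ is a finite union of closed topological disks, and by~(b) the sequence is nested, so $E=\bigcap_n E_n$ is compact as a nested intersection of compact sets.

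\emph{Total disconnectedness.} For $x\in E$ let $B_n(x)$ denote the component of $E_n$ containing~$x$, bounded by an annular region $A_n(x)\subseteq U_n$. Suppose, for contradiction, that distinct $x,y\in E$ satisfy $y\in B_n(x)$ for every~$n$; then $K:=\bigcap_n B_n(x)$ is a compact connected subset of $B_1(x)$ containing both~$x$ and~$y$, and its ``simply-connected hull'' $\hat K$ in $B_1(x)$---namely $K$ together with the bounded components of $B_1(x)\setminus K$---yields an annular region $A:=\operatorname{int}(B_1(x))\setminus\hat K$ containing every $A_n(x)$ with $n\ge 2$ as a nested concentric sub-annulus. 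Additivity then forces $\mod A\ge\sum_{n\ge 2}\mod A_n(x)=\infty$ by~(c), while Gr\"otzsch applied with $z_1=x$, $z_2=y$ and $R=\operatorname{diam}(B_1(x))$ gives $\mod A\le\frac{1}{2\pi}\log(8R/|x-y|)<\infty$, a contradiction.

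\emph{Absolute area zero.} Given univalent $f\co W_E\to\C$ (taken, after normalisation, to have a pole at $\infty$), set $\tilde A_n^{(i)}:=f(A_n^{(i)})$, an annular region of modulus $\mod A_n^{(i)}$ with ``inner'' bounded complementary component $\tilde B_n^{(i)}$ distinguished by continuity of $f$ on the domain side; these are nested by~(b). The same additivity/Gr\"otzsch combination in the target, applied to $\operatorname{int}(\tilde B_{n_0}^{(i_{n_0})})\setminus\tilde B_n^{(i_n)}$ (which contains $\tilde A_{n_0+1}^{(i_{n_0+1})},\ldots,\tilde A_n^{(i_n)}$ as nested concentric sub-annuli), yields
\[
\operatorname{diam}\tilde B_n^{(i_n)}\;\le\;8\operatorname{diam}\tilde B_{n_0}^{(i_{n_0})}\exp\!\Bigl(-2\pi\!\sum_{k=n_0+1}^{n}\mod A_k^{(i_k)}\Bigr)
\]
along any nested chain. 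Condition~(c) forces these diameters to tend to $0$, so every $x\in E$ determines a single point $g(x):=\bigcap_n\tilde B_n(x)$, and $\hat f\co\C\to\C$ defined by $\hat f|_{W_E}=f$ and $\hat f|_E=g$ is continuous. A degree argument (using $\hat f(\infty)=\infty$ to extend to a degree-one self-map of the Riemann sphere) then makes $\hat f$ surjective, placing $\C\setminus f(W_E)\subseteq g(E)\subseteq\bigcap_n\tilde E_n$ where $\tilde E_n:=\bigcup_i\tilde B_n^{(i)}$. \emph{The hardest step} will be proving the area estimate $\operatorname{area}(\tilde E_n)\to 0$, since chain-wise shrinkage does not by itself control aggregate area (the number of level-$n$ components can proliferate); I expect to strengthen the shrinkage to hold uniformly in~$i$ via a K\"onig's-lemma compactness argument using the finite branching of~(a)---a hypothetical sequence of large-diameter components at arbitrarily high levels would, through the finite-branching tree, yield an infinite nested chain violating the shrinkage bound---and then to iterate the additivity/Gr\"otzsch bound between parent and child disks to get a parent-to-child area ratio estimate that, combined with the nesting, yields the required aggregate decay.
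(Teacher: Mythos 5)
A preliminary remark: the paper does not prove Theorem~\ref{thm:absareazero} at all --- it is quoted from McMullen~\cite{Curt} and stated with a \qed --- so there is no internal proof to compare with, and your attempt has to be judged on its own terms. Your compactness argument is fine (though the bounded complementary components are full continua, not necessarily closed topological disks), and your total-disconnectedness argument is essentially correct up to one repair: $\operatorname{int}(B_1(x))$ need not be connected, so $\operatorname{int}(B_1(x))\setminus\hat K$ need not be an annular region. Replace it by the component of $\operatorname{int}(B_1(x))$ containing $A_2(x)$ (each such component is a simply connected domain), or simply by a round disk containing $E_1\cup U_1$, with $\hat K$ removed; then each $A_n(x)$, $n\ge 2$, is contained in and concentric with this annulus, and the additivity/Gr\"otzsch contradiction goes through as you describe.

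The genuine gap is exactly the step you flag, and the repair you propose would not close it. Gr\"otzsch's theorem controls the \emph{diameter} of the enclosed continuum, not its area: even with your K\"onig's-lemma observation (which is correct and is indeed needed to convert hypothesis~(c) into a statement uniform over the level-$n$ components), uniform diameter shrinkage of finitely many disjoint pieces does not force their total area to zero --- fat-Cantor-type configurations have arbitrarily small pieces of positive total area --- and no ``parent-to-child area ratio'' can be extracted from additivity plus Gr\"otzsch alone, because $\operatorname{area}(D)$ of a filled region $D$ is in general not comparable to $\operatorname{diam}(D)^2$, so a diameter bound on the child gives no bound relative to the parent's area. The missing ingredient is an area--modulus inequality: if $A$ is an annular region with bounded complementary component $B$ and filled region $D=A\cup B$, then $\operatorname{area}(B)\le e^{-4\pi\operatorname{mod}A}\,\operatorname{area}(D)$ (sharp for round annuli; it can be proved via the Koebe area theorem, or with a multiplicative constant by locating an essential round annulus of comparable modulus inside $A$, the constant then being absorbed by using additivity to group consecutive annuli of a chain into blocks of modulus at least $1$). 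Granting also that the image annuli reproduce the domain nesting pattern and that $\C\setminus f(W_E)\sbs\tilde E_n$ --- both of which you assert but which themselves require a winding-number/degree argument --- the bookkeeping then closes: sibling filled regions are disjoint subsets of the parent hole, so the weighted sum $\sum_i e^{4\pi\sigma_n(i)}\operatorname{area}(\tilde B_n^{(i)})$, where $\sigma_n(i)$ is the modulus sum along the ancestor chain of the $i$th level-$n$ component, is nonincreasing in $n$, giving $\operatorname{area}(\tilde E_n)\le e^{-4\pi\min_i\sigma_n(i)}\operatorname{area}(\tilde E_1)\to 0$ by your K\"onig argument. Without an inequality of this kind the decisive estimate, and hence the theorem, is not reached, so the proposal is incomplete at its crux.
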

\qed

\section{Paper surfaces}
\label{sec:paper-surfaces}
In this section the main objects of study -- paper-folding schemes and
paper surfaces -- are introduced. Their topological and metric
structures are discussed, and some pertinent results from~\cite{O1}
are stated without proof: the emphasis in this paper will be on the
conformal structure of paper surfaces, which is discussed in
Section~\ref{sec:conf-struct-paper}. 

\begin{defns}[Arc, segment, polygon, multipolygon]
An {\em arc} in a metric space~$X$ is a homeomorphic image $\gamma\sbs
X$ of the interval $[0,1]$.  Its {\em endpoints} are the images of $0$
and $1$ and its {\em interior}~$\interior{\gamma}$ is the image of
$(0,1)$. A {\em segment} is an arc in $\C$ which is a subset of a
straight line. The length of a segment $\alpha$ is denoted $|\alpha|$.
A {\em simple closed curve} in $X$ is a homeomorphic image of the unit
circle.

An arc or simple closed curve in~$\C$ is called {\em polygonal} if
it is the concatenation of finitely many segments. The maximal
segments are called the {\em edges} of the arc or simple closed curve,
and their endpoints are its {\em vertices}.

 A {\em polygon} is a closed topological disk in $\C$ whose boundary
 is a polygonal simple closed curve. Its {\em vertices} are the same
 as its boundary's vertices and its {\em sides} are the edges forming
 its boundary.

A {\em multipolygon} is a disjoint union of finitely many polygons.  A
{\em polygonal multicurve} is a disjoint union of finitely many
polygonal simple closed curves. 
\end{defns}

\begin{defns}[Segment pairing, interior pair, full collection, fold]
Let $C\sbs\C$ be an oriented polygonal multicurve and
$\alpha,\alpha'\sbs C$ be segments (not necessarily edges) of the same
length with disjoint interiors. The {\em segment pairing}
$\left\langle\alpha,\alpha'\right\rangle$ is the relation which
identifies pairs of points of $\alpha$ and $\alpha'$ in a
length-preserving and orientation-reversing way. The segments
$\alpha,\alpha'$ and any two points which are identified under the
pairing are said to be {\em paired}. Two paired points which lie in
the interior of a segment pairing form an {\em interior pair}. The
notation for a pairing is not ordered, so that
$\left\langle\alpha,\alpha'\right\rangle$ and
$\left\langle\alpha',\alpha\right\rangle$ represent the same pairing.

A collection $\{\left\langle\alpha_i,\alpha_i'\right\rangle\}$ of
segment pairings is {\em interior disjoint} if the interiors of all of
the segments $\alpha_i$ and $\alpha_i'$ are pairwise disjoint.

The {\em length} of a segment pairing
$\llangle\alpha,\alpha'\rrangle$, denoted
$|\llangle\alpha,\alpha'\rrangle|$, is the length of one of the arcs
in the pairing, i.e.,
$|\llangle\alpha,\alpha'\rrangle|:=|\alpha|=|\alpha'|$.

A (necessarily countable) interior disjoint collection
$\cP=\{\llangle\alpha_i,\alpha'_i\rrangle\}$ of segment pairings
on~$C$ is {\em full} if $\sum_i|\llangle\alpha_i,\alpha'_i\rrangle|$
equals half the length of~$C$, so that the pairings in $\cP$ cover $C$
up to a set of Lebesgue 1-dimensional measure zero.

A pairing of two segments which have an endpoint in common is called a
{\em fold} and the common endpoint is called its {\em folding
  point}. The folding point of a fold is therefore not paired with
any other point.

An interior disjoint collection~$\cP$ of segment pairings induces a
reflexive and symmetric {\em pairing relation}, also denoted~$\cP$, so
$\cP = \{(x,x'):\text{ $x,x'$ are paired or $ x=x'$}\}$.
\end{defns}

\begin{defns}[Paper-folding scheme]
\label{defn:origami}
A {\em paper-folding scheme} is a pair $(P,\cP)$ where $P\sbs\C$ is
a multipolygon with the intrinsic metric $d_P$ induced from $\C$,
and $\cP$ is a full interior disjoint collection of segment pairings
on $\partial P$ (positively oriented). The metric quotient
$S:=P/d_P^\cP$ of $P$ under the semi-metric $d_P^\cP$ induced by the
pairing relation $\cP$ is the associated {\em paper space}. When $S$
is a closed (compact without boundary) topological surface, then
$(P,\cP)$ is called a {\em surface paper-folding scheme} and $S$ is
the associated {\em paper surface}.

The projection map is denoted $\pi\co P\to S$ and the quotient
$G=\pi(\partial P)\sbs S$ of the boundary is the {\em scar}. Notice
that the restriction $\pi:\Int(P)\to S\setminus G$ is a homeomorphism.

The (quotient) metric on~$S$ is denoted~$d_S$. The metric~$d_G$ on~$G$
is defined to be its intrinsic metric as a subset of~$S$: this is
equal to the quotient metric on~$G$ as a quotient of~$\partial P$,
where $\partial P$ is endowed with its intrinsic metric as a subset
of~$P$ (Lemma~\ref{lem:metricstructureofG}).

The measure $\rmm_G$ on~$G$ is defined to be the push-forward of
Lebesgue $1$-dimensional measure $\rmm_{\partial P}$ on $\partial
P$. Hausdorff $1$-dimensional measure on~$G$ is denoted~$\mu^1_G$ ---
Lemma~\ref{lem:metricstructureofG} states that $\mu^1_G=
\frac{1}{2}\rmm_G$.
\end{defns}

\medskip

 The next definitions distinguish different types of points in a paper
 space.

\begin{defns}[Vertex, edge, singular point, planar point]
\label{defn:origamipts}
For $k\in\N\cup\{\infty\}$, a point $x\in G$ is a {\em vertex of
  valence~$k$}, or a {\em $k$-vertex}, if either (i) $\#\pi\I(x)=k\neq
2$; or (ii) $\#\pi\I(x)=k=2$ and $\pi\I(x)$ contains a vertex of
$P$. Let~$\cV$ denote the set of all vertices of~$G$.

The points of the paper space~$S$ are divided into three types:
\begin{description}
\item[Singular points] vertices of valence~$\infty$ and accumulations
  of vertices. Let~$\cV^s$ denote the set of singular points.
\item[Regular vertices] vertices which are not singular (that is,
  isolated vertices of finite valence).
\item[Planar points] all other points of~$S$: that is, the points of
  $S\setminus \bcV$.
\end{description}

The closures of the connected components of $G\setminus\bcV$ are
called {\em edges} of the scar~$G$.
\end{defns}

The next lemma summarises the properties of the metric and measure on
the scar~$G$ which will be used here.

\begin{lem}[\cite{O1}]
\label{lem:metricstructureofG}
Let~$G$ be the scar of the paper-folding scheme~$(P,\cP)$. Then
\begin{enumerate}[a)]
\item The set of planar points is open and dense in the scar~$G$, while the
set~$\bcV$ of vertices and singular points is a closed nowhere dense subset
of~$G$ with zero $\rmm_G$-measure.
\item The intrinsic metric on~$G$ induced by the inclusion $G\subset
  S$ agrees with the quotient metric on $G=\partial P / d_{\partial
    P}^{\cP}$ induced by the intrinsic metric~$d_{\partial P}$
  on~$\partial P$.
\item $G$ has Hausdorff dimension 1, and Hausdorff $1$-dimensional
  measure $\mu_G^1$ on~$G$ is equal to~$\frac{1}{2}\rmm_G$.
\item Every arc~$\gamma$ in~$G$ is rectifiable, and $|\gamma|_G =
  \frac{1}{2}\rmm_G(\gamma)$. 

\end{enumerate}
\end{lem}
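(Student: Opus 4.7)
The plan is to prove the four statements in order, working with the decomposition $\partial P = E\sqcup (\partial P\setminus E)$, where $E$ is the union of the interiors of the paired segments $\alpha_i,\alpha_i'$; fullness of $\cP$ guarantees that $\partial P\setminus E$ has Lebesgue $1$-dimensional measure zero. For (a), I would first observe that by definition $\cV^s$ contains the accumulation points of the countable set $\cV$ of vertices, so $\bcV=\cV\cup\cV^s$ is closed and the set of planar points is its open complement. Each point of $\pi(E)$ has a two-point preimage in the interior of some paired segment and is therefore planar, so $\bcV\sbs\pi(\partial P\setminus E)\cup\pi(V)$, where $V$ is the countable set of segment endpoints; pushing Lebesgue measure forward via $\pi$ then yields $\rmm_G(\bcV)=0$. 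Density of planar points follows because, by the argument used below for (d), any subarc of $G$ has positive $\rmm_G$-measure and hence cannot lie inside the $\rmm_G$-null set $\bcV$.

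For (b), write $d_G^{\text{int}}$ for the intrinsic metric on $G$ induced by $G\sbs S$ and $d_G^{\text{quot}}$ for the quotient of $d_{\partial P}$ under $\cP$. The inequality $d_G^{\text{int}}\le d_G^{\text{quot}}$ is immediate: any path with jumps in $\partial P$ realising $d_G^{\text{quot}}(x,y)$ projects to a path in $G\sbs S$ of the same length, because the jumps occur between paired points, which are identified in $S$. For the opposite inequality, given a rectifiable path $\gamma\sbs G$ from $x$ to $y$, I would approximate $\gamma$ by subpaths lying in the complement of a shrinking neighbourhood of $\cV^s$, split each such subpath at its meetings with $\cV$ into finitely many subarcs contained in single edges of $G$, lift each subarc uniquely into the interior of some segment of $\cP$, and concatenate these lifts with zero-length jumps across paired endpoints to obtain a path with jumps in $\partial P$ whose total length equals that of the approximation. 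Using the finiteness of $|\gamma|_S$ to control the contribution of jumps occurring at points of $\cV^s$, one passes to the limit to conclude $d_G^{\text{quot}}(x,y)\le|\gamma|_S$, and hence $d_G^{\text{quot}}\le d_G^{\text{int}}$.

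Statements (c) and (d) I would handle together. Covering each $\pi(\alpha_i)$ by short intervals shows $\mu_G^1(\pi(\alpha_i))\le|\alpha_i|$; since $\sum_i|\alpha_i|$ equals half the length of $\partial P$, $\mu_G^1(G)$ is finite and $\dim_H G\le 1$, while the existence of a planar subarc of $G$ isometric to a segment in $\C$ gives the reverse inequality. For the identity $\mu_G^1=\tfrac12\rmm_G$, observe that on $\pi(\interior{\alpha_i})$ the map $\pi$ is a two-to-one isometric identification of $\interior{\alpha_i}$ with $\interior{\alpha'_i}$, so $\mu_G^1(\pi(\interior{\alpha_i}))=|\alpha_i|=\tfrac12\rmm_G(\pi(\interior{\alpha_i}))$; additivity over the countable disjoint family $\{\pi(\interior{\alpha_i})\}$, whose complement in $G$ is contained in $\bcV\cup\pi(V)$ and has both measures zero, extends the identity to all of $G$. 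The length formula in (d) then follows from the standard identification of intrinsic length with Hausdorff $1$-measure for arcs in a $1$-dimensional rectifiable metric space, while rectifiability itself is implied by $\mu_G^1(G)<\infty$.

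The principal obstacle I anticipate is the lift construction in (b): an arbitrary rectifiable path in $G$ may accumulate on $\cV^s$ from both sides and cross infinitely many edges, so the limit argument must carefully control the positions and total contribution of the countably many jumps in the lifted broken path in $\partial P$. A closely related issue underlies the length formula in (d), where one must similarly justify decomposing a general arc in $G$ along its possibly infinitely many edge crossings; both steps will rely on the measure-theoretic content of $\mu_G^1(G)<\infty$ rather than any combinatorial finiteness of the paper-folding scheme.
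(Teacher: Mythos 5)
A preliminary point: the paper does not prove this lemma --- it is imported from~\cite{O1} and stated with no argument --- so there is no internal proof to compare yours against; what follows assesses your proposal on its own terms. Your skeleton for (a), (c), (d) (work with the full-measure open set $E\sbs\partial P$ of interiors of paired segments, use that $\pi$ identifies $\interior{\alpha_i}$ with $\interior{\alpha_i'}$ two-to-one and isometrically, and push measures forward) is the natural one, but two steps are asserted exactly where the work lies. First, in (a) you pass from ``$x\in\pi(E)$ has two preimages interior to a pairing'' to ``$x$ is planar''. Planar means $x\notin\bcV=\ol\cV$, i.e.\ no vertices accumulate at $x$ \emph{in the quotient metric}, and this does not follow from $x$ merely failing to be a vertex: closeness in $G$ is measured downstairs, not in $\partial P$. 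What is needed --- and what also underwrites the ``two-to-one isometric identification'' you invoke in (c), since a priori quotient distances could be strictly smaller than upstream distances --- is the short-chain argument: a finite chain of total length smaller than the distance from the preimages of $x$ to the endpoints of $\alpha_i,\alpha_i'$ can only jump between the two partner intervals, so $\pi$ is a local isometry on $E$ and no vertex is $d_G$-close to $x$. This is true and fixable, but it is the essential geometric input and your write-up skips it. Second, in (c) you dispose of $G\setminus\bigcup_i\pi(\interior{\alpha_i})$ by claiming it lies in $\bcV\cup\pi(V)$ and ``has both measures zero''. The containment is doubtful (images of segment endpoints need not be vertices, and uncovered boundary points need not lie in $\ol\cV$), and, more seriously, at that stage you know $\rmm_G(\bcV)=0$ but not $\mu^1_G(\bcV)=0$ --- the latter is essentially what (c) is proving. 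The correct and easy repair is that $\pi\co(\partial P,d_{\partial P})\to(G,d_G)$ is $1$-Lipschitz, hence $\mu^1_G(\pi(\partial P\setminus E))\le \rmm_{\partial P}(\partial P\setminus E)=0$; as written, this step is a gap.

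Part (b) is where the proposal genuinely falls short, as you yourself flag. The hard inequality is $d^{\cP}_{\partial P}\le$ (intrinsic metric from $G\sbs S$), and your plan leaves its crux undone. A subarc of an edge need not lie in the image of the interior of a single segment (edges are in general concatenations of images of many pairings through $2$-valent identified endpoints), so the ``unique lift into the interior of some segment'' does not exist as stated; to compare lengths you must also know that $\pi$ is length-preserving along edges and that $d_S$ agrees with $d_G$ locally at planar points, both of which are again instances of the unproved local isometry statement. More seriously, the quotient semi-metric is an infimum over \emph{finite} chains, while your construction produces countably many lifted pieces spliced at visits to $\cV^s$, where the set of visit times can be a Cantor set and where the lift may have to change sheets between different preimages of the same singular point; truncating to finite chains and bounding the total splicing cost is precisely the difficulty, and ``using the finiteness of $|\gamma|_S$'' names the problem rather than solving it. (A plausible repair is to note that identified points have quotient distance zero, so each splice can be realised by a finite chain of arbitrarily small cost, and then sum a geometric series --- but that argument, together with the local comparisons above, is the actual proof, and it is absent.) So parts (a), (c), (d) are essentially right modulo the local-isometry and $\mu^1$-nullity justifications, while part (b) remains a sketch rather than a proof.
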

\qed

 For clarity of exposition, attention will be concentrated on {\em
   plain} folding schemes: these are both the most common and the
 simplest type of paper foldings and for them the paper space is
 always a sphere and the scar is always a dendrite
 (Theorem~\ref{thm:plain-top-structure}).

\begin{defns}[Unlinked pairing, plain paper-folding scheme]
\label{defns:linked-plain}
Let~$\gamma$ be a polygonal arc or polygonal simple closed curve.

Two pairs of (not necessarily distinct) points $\{x,x'\}$ and
$\{y,y'\}$ of~$\gamma$ are {\em unlinked} if one pair is contained in
the closure of a connected component of the complement of the
other. Otherwise they are {\em linked}.

A symmetric and reflexive relation~$\rmR$ on $\gamma$ is {\em
  unlinked} if any two unrelated pairs of related points are unlinked:
that is, if $x\,\rmR\,x'$, $y\,\rmR\,y'$, and neither $x$ nor~$x'$ is
related to either~$y$ or~$y'$, then $\{x,x'\}$ and $\{y,y'\}$ are
unlinked. 

An interior disjoint collection~$\cP$ of segment pairings on~$\gamma$
is {\em unlinked} if the corresponding relation~$\cP$ is unlinked.

A paper-folding scheme~$(P,\cP)$ is {\em plain} if~$P$ is a single
polygon and~$\cP$ is unlinked.
\end{defns}

\begin{thm}[Topological structure of a plain paper folding~\cite{O1}]
\label{thm:plain-top-structure}
The quotient $S$ of a plain paper-folding scheme is a topological
sphere, and its scar~$G$ is a dendrite.
\end{thm}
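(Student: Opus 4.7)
The plan is to establish the two conclusions separately, with the unlinked hypothesis being the crucial input in each. First I would show that $G$ is a dendrite: $G$ is automatically a Peano continuum (compact, connected, locally connected), since it is a continuous image of the circle $\partial P$ and therefore falls under the Hahn--Mazurkiewicz theorem. To rule out simple closed curves in $G$, suppose for contradiction that $C\subset G$ were one. Parametrising $C$, I would lift it to a cyclic sequence of arcs in $\partial P$ whose consecutive endpoints are paired under $\cP$. The resulting cyclic pattern of paired points on the circle $\partial P$ would necessarily contain at least one linked pair, contradicting the unlinked hypothesis; the idea is that a non-trivial cycle of identifications on a circle is combinatorially incompatible with all pairs being pairwise unlinked.

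Next I would show $S$ is a sphere, beginning with the finite case and proceeding by induction on the number $n$ of pairings. The unlinked hypothesis guarantees an \emph{outermost} pairing $\langle\alpha,\alpha'\rangle$: one that can be folded first without interfering with the other pairings (corresponding to a leaf chord in the associated non-crossing chord diagram). Performing this identification first reduces the scheme to one with $n-1$ pairings, still unlinked and full, on a new polygon that is topologically a disk; the inductive hypothesis yields a sphere, and restoring the outermost pairing merely attaches a pendant edge to the dendrite scar without disturbing the sphere topology. The base case $n=1$ is the classical identification of the two halves of the boundary circle of a disk, which produces $S^2$. For the general case, I would approximate $\cP$ by finite full unlinked subschemes $\widetilde{\cP}_N$ (the first $N$ pairings of $\cP$, augmented by midpoint folds on each unpaired remnant segment) and realise $S$ as the limit under further identifications of the corresponding spheres $\widetilde{S}_N$. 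Equivalently, one could extend $\cP$ trivially to a decomposition of $S^2$ by attaching an exterior disk to $P$, and invoke Moore's decomposition theorem: every non-degenerate equivalence class lies on the circle $\partial P\subset S^2$ and so does not separate $S^2$, while the construction of $d_P^{\cP}$ guarantees upper semi-continuity.

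The hardest part will be the treatment of singular points of infinite valence, whose fibres in $\partial P$ may be infinite, totally disconnected, Cantor-like closed sets. Standard Moore-type theorems assume connected non-degenerate fibres, so this case requires either a refinement of Moore's theorem applicable to totally disconnected fibres tamely embedded in an arc, or a careful limit argument transferring the $2$-manifold structure from the finite approximations $\widetilde{S}_N$ to $S$. The approximation route is conceptually cleaner but requires verifying uniformity: that the segments collapsed near each singular point remain controlled enough in the limit that a genuine disk-neighbourhood structure persists, rather than merely a Peano-continuum one.
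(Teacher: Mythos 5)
Your proposal is sound where the problem is easy and stops short where it is hard; note also that this paper does not prove Theorem~\ref{thm:plain-top-structure} at all but quotes it from~\cite{O1}, so the only meaningful comparison is with a complete argument. Your induction for finitely many pairings is fine (fullness forces an outermost pairing to be a fold, and folding an adjacent pair returns a disk), and you correctly diagnose that classical Moore does not apply because the non-degenerate fibres over singular points are totally disconnected closed sets. But you then explicitly leave that case open (``requires either a refinement of Moore's theorem \dots or a careful limit argument''), which is exactly the content of the theorem: with infinitely many pairings the whole difficulty is at the singular set. The standard repair is not a refinement of Moore for disconnected fibres but a change of decomposition: cap $P$ off with an exterior disk to obtain $S^2$, and for each non-degenerate equivalence class collapse its \emph{hull} in the exterior disk (the union of disjoint chords/geodesics spanning the class). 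Unlinkedness is precisely what makes these hulls pairwise disjoint, non-separating continua, and collapsing the hulls induces the same identification on $\partial P$ as the boundary relation. The genuine work, absent from your sketch, is (i) that the relation to be collapsed is a \emph{closed} equivalence relation containing $\cP$ (classes at singular points are obtained by taking limits of pairings, not by $\cP$ alone), and (ii) upper semicontinuity of the hull decomposition, which is where fullness and the way pairings accumulate at singular points enter. Your alternative route via finite approximations $\widetilde{S}_N$ has the same hole: a limit of spheres under further identifications need not be a sphere, and no uniform control is supplied.

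A second, independent gap: $S$ is by Definition~\ref{defn:origami} the \emph{metric} quotient $P/d_P^{\cP}$, whose fibres are the zero-$d_P^{\cP}$-distance classes; these can a priori be strictly coarser than the equivalence relation generated by the pairings, and the discrepancy would occur exactly at singular points. Both of your constructions (Moore-type collapse and the limit of the $\widetilde{S}_N$) produce the topological quotient by the generated (or closed) relation, so you must additionally show that the quotient semimetric is a genuine metric on that quotient -- distinct classes lie at positive $d_P^{\cP}$-distance -- before you may conclude that the object you built is the paper space of the theorem. Finally, in the dendrite half, your lift of a simple closed curve in $G$ to a finite cyclic sequence of arcs with paired endpoints assumes the curve meets only finitely many paired segments; a curve in $G$ may pass through infinitely many edges and through singular points, so the combinatorial contradiction needs a compactness/limit argument -- or the dendrite statement can simply be deduced after the sphere statement, since a simple closed curve in $G$ would separate $S$ while $S\setminus G=\pi(\Int(P))$ is connected and dense, so one complementary component would have to lie in the nowhere dense set $G$.
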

\qed

\begin{rmks}
\label{rmk:dendrites}
Recall that a {\em dendrite}~$G$ is a locally connected continuum
which doesn't contain any simple closed curve. The following
properties of dendrites will be used later~\cite{KuraII,Whyburn}:
\begin{enumerate}[a)]
\item Any two distinct points of~$G$ are separated by a third point
  of~$G$. Conversely, any continuum with this property is a dendrite.
\item If~$x$ and~$y$ are distinct points of~$G$ then there is a unique
  arc in~$G$, denoted~$[x,y]_G$, with endpoints~$x$ and~$y$. The
  notation~$(x,y)_G$ will be used to denote~$[x,y]_G\setminus\{x,y\}$.
\item Every subcontinuum of~$G$ is also a dendrite.
\end{enumerate}
\end{rmks}

Necessary and sufficient conditions can also be given for a
paper-folding scheme to be a surface paper folding. The only
constraint is the obvious one: linked pairings create handles, and if
there are infinitely many handles then the quotient space cannot be a
compact surface -- the paper-folding scheme must therefore be
``finitely linked''. However, all that will be needed here is the
following description of the scar of a surface paper folding.

\begin{thm}[\cite{O1}]
\label{thm:top-structure}
Necessary and sufficient conditions can be given for a paper-folding
scheme~$(P,\cP)$ to be a surface paper folding
scheme. In this case, the
scar~$G$ is a local dendrite, which can be written as $G=C\cup\Gamma$,
where
\begin{itemize}
\item $C$ is a finite connected graph in~$S$ with the property that
  any simple closed curve in~$C$ is homotopically non-trivial in~$S$; and
\item $\Gamma$ is a union of finitely or countably many disjoint
  dendrites, with diameters decreasing to~$0$, each of which
  intersects~$C$ in exactly one point.
\end{itemize}
\end{thm}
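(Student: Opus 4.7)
The proof naturally splits into two parts: identifying the necessary and sufficient conditions for $(P,\cP)$ to produce a closed surface, and then analysing the structure of the scar under these conditions.

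For the first part, call two pairings $\llangle\alpha_1,\alpha_1'\rrangle$ and $\llangle\alpha_2,\alpha_2'\rrangle$ of $\cP$ \emph{linked} if the unordered pairs of segments $\{\alpha_1,\alpha_1'\}$ and $\{\alpha_2,\alpha_2'\}$ interleave on the oriented polygonal multicurve $\partial P$. Let $\cP_0\subseteq\cP$ be the sub-collection of pairings which are linked to at least one other pairing of $\cP$. The proposed condition is that \emph{$\cP_0$ is finite and the combinatorics of $\cP_0$, together with the component structure of $P$, produces a connected closed surface under identification}. Necessity of finiteness of $\cP_0$ comes from the standard observation that each linked pair of pairings contributes a non-separating handle; infinitely many such would force the quotient to fail local compactness or to have infinite genus, precluding a closed surface. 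Sufficiency is obtained by performing the finitely many $\cP_0$-identifications first, yielding a compact surface with polygonal boundary, then verifying that the remaining unlinked identifications of $\cP_1:=\cP\setminus\cP_0$ fill in the boundary in a dendritic manner (using the plain case below), producing a closed surface.

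For the second part, set $C:=\pi\bigl(\bigcup_{\llangle\alpha,\alpha'\rrangle\in\cP_0}(\alpha\cup\alpha')\bigr)$. Since $\cP_0$ is finite, $C$ is a finite graph in $S$. One may assume $\cP_0$ is chosen minimally among such finite sub-collections guaranteeing that the remaining pairings are pairwise unlinked; minimality then forces every simple closed curve in $C$ to be essential in $S$, since a null-homotopic one would indicate a redundant identification that could be absorbed into $\cP_1$. To obtain $\Gamma$, observe that removing from $\partial P$ the (finitely many) segments involved in $\cP_0$ leaves a disjoint union of arcs in $\partial P$. Group these arcs by the equivalence relation generated by $\cP_1$: because $\cP_1$ is pairwise unlinked, each equivalence class, together with its restricted pairings, is combinatorially a \emph{plain} sub-scheme (after reconnecting endpoints appropriately). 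By Theorem~\ref{thm:plain-top-structure} applied within each such sub-scheme, its image in $S$ is a dendrite; distinct classes give disjoint dendrites; and each such dendrite touches $C$ in exactly one point, namely the image of the (coalesced) endpoints of its constituent arcs — if it touched $C$ at two points, one could form a simple closed curve in $G$ passing through $C$, creating an inessential handle and contradicting minimality (or the surface property).

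Finally, the dendrites' diameters tend to zero. By Lemma~\ref{lem:metricstructureofG}, the intrinsic length of any arc in a dendrite $D\subseteq\Gamma$ is $\tfrac12\rmm_G(D)$, which is bounded above by half the total Lebesgue length of the arcs of $\partial P$ generating that component. These lengths partition a subset of $\partial P$ of total length at most $|\partial P|_\C<\infty$, so only finitely many can exceed any prescribed $\veps>0$; hence $\diam_G D\to 0$ as the countable family of dendrites is exhausted.

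The main obstacle is the verification in the middle paragraph that each component of $G\setminus C$ attaches to $C$ at a single point and has dendritic closure: this requires a careful bookkeeping argument showing that the unlinkedness of $\cP_1$ forces the identifications within a single equivalence class of arcs to be combinatorially equivalent to a plain scheme on a disk, so that Theorem~\ref{thm:plain-top-structure} applies. Ruling out multi-point attachments requires invoking minimality of $\cP_0$ together with the local surface structure near $C$.
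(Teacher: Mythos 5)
First, a point of comparison: the paper does not prove this theorem at all --- it is quoted from \cite{O1}, and only the structural description of the scar is used later --- so there is no in-paper argument to measure your proposal against; it has to stand on its own, and as it stands it has genuine gaps.

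The most serious gap is the step you yourself flag: that after performing the finitely many linked identifications, each equivalence class of the remaining pairings is ``combinatorially a plain sub-scheme'', so that Theorem~\ref{thm:plain-top-structure} yields a dendrite attached to $C$ at a single point. Theorem~\ref{thm:plain-top-structure} applies to a \emph{full} unlinked collection on the boundary of a \emph{single} polygon; your classes are supported on arcs distributed over possibly several boundary components, and the reduction hidden in ``after reconnecting endpoints appropriately'' is exactly the content that needs proof. Worse, the claim fails as stated in the multipolygon case: take $P$ to be two disjoint squares and pair each side of one with the corresponding side of the other (the pillowcase sphere). No two pairings interleave on $\partial P$, so your $\cP_0$ is empty, $C=\emptyset$, and your argument would conclude that the scar is a disjoint union of dendrites; in fact the scar is a circle. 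So the graph part $C$ cannot simply be taken to be $\pi$ of the union of the linked pairings, and linking along $\partial P$ alone does not control which simple closed curves appear in the scar. Relatedly, the ``minimality of $\cP_0$'' device is not available: the set of pairings linked to at least one other pairing is canonical rather than minimal (removing one linked pairing from it still leaves the complement pairwise unlinked), and the inference from a null-homotopic simple closed curve in $C$ to a ``redundant identification absorbable into $\cP_1$'' is asserted, not proved.

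Two lesser points. The condition you propose is partly circular --- ``the combinatorics of $\cP_0$, together with the component structure of $P$, produces a connected closed surface'' is what was to be characterized --- so necessity and sufficiency carry little content as formulated. And the necessity argument for finiteness of the linked part is off target: the quotient is automatically compact, being the image of the compact multipolygon under the distance non-increasing projection, so ``failure of local compactness'' is not the obstruction; one must actually show that infinitely many linked pairs force unbounded genus (or non-manifold points), e.g.\ via homological independence of the glued curves, uniformly in the presence of accumulating pairings, and this is not supplied. The final diameter-to-zero argument via Lemma~\ref{lem:metricstructureofG} is fine.
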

\qed

\section{Conformal structures on paper surfaces}
\label{sec:conf-struct-paper}

If~$(P,\cP)$ is a surface paper-folding scheme with quotient paper
surface~$S$, then there is a natural complex structure on the set
$S\setminus\ol\cV$ of planar points of~$S$ induced by the local
Euclidean structure. This complex structure extends readily across
regular (isolated) vertices of~$G$: at such a vertex around which the
total angle is~$\theta$, maps of the form $z\mapsto z^{2\pi/\theta}$
can be used as charts. 

The question addressed in this section is whether this conformal
structure on $S\setminus\cV^s$ extends uniquely across~$\cV^s$ to
endow~$S$ with a unique natural Riemann surface structure. When this
is the case, $S$ is said to be a {\em Riemann paper surface}. 

In fact, it can be useful to ask whether the conformal structure
extends across some subset~$\Lambda$ of the singular set. Throughout
this section, $(P,\cP)$ will be a surface paper-folding scheme with
scar~$G$, and~$\Lambda$ will denote a clopen (i.e.\ open and closed)
subset of the set~$\cV^s\sbs\ol\cV$ of singular points of~$G$. By
Lemma~\ref{lem:metricstructureofG}, any such set~$\Lambda$ is totally
disconnected with $\meas{\Lambda}=0$.

\subsection{Statement of results}
Theorem~\ref{thm:generalmain} below is the first main result of this
paper: it gives sufficient conditions for the complex structure
on~$S\setminus\cV^s$ to extend uniquely across the set~$\Lambda$. The
theorem will be proved in Section~\ref{subsec:proof-generalmain} after
some examples have been discussed in Section~\ref{subsec:examples}.

\begin{defns}[$\CBqr$, $\CCqr$, $\cmqr$, $\cnqr$]
\label{defn:G-circles-etc}
Let~$q\in\Lambda$. Denote by $\CBqr$ the connected component of
$B_G\lr$ which contains~$q$; by $\CCqr$ the boundary in~$G$ of
$\CBqr$; by $\cmqr=\meas{\CBqr}$ the measure of $\CBqr$; and
by~$\cnqr\in\N\cup\{\infty\}$ the cardinality of~$\CCqr$.
\end{defns}

\begin{rmk}
\label{rmk:chains-in-Lambda}
If $q_1,q_2,\ldots,q_n\in\Lambda$ are such that
  $d_G(q_i,q_{i+1})<2r$ for $1\le i<n$, then all of the sets
  $\CB(q_i;r)$ are equal.
\end{rmk}

\begin{thm}
\label{thm:generalmain}
Let~$(P,\cP)$ be a surface paper-folding scheme with associated paper
surface~$S$ and scar~$G$, and let~$\Lambda\sbs\cV^s$ be a
clopen subset of~$\cV^s$. Then the complex structure
on~$S\setminus\cV^s$ extends uniquely across~$\Lambda$ provided that,
for every~$q\in\Lambda$,
\begin{equation}
\label{eq:generaldivint1}
\int_0 \frac{\rmd r}
{\cmqr + r\cdot \cnqr} = \infty.
\end{equation}
In particular, if~$\Lambda=\cV^s$ and~(\ref{eq:generaldivint1}) holds
for every~$q\in\Lambda$, then~$S$ is a Riemann paper surface.
\end{thm}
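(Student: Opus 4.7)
The plan is to apply McMullen's criterion for absolute area zero (Theorem~\ref{thm:absareazero}) together with the removability theorem (Theorem~\ref{thm:removability1}) in local coordinates around each $q\in\Lambda$. Since fractional-power charts extend the conformal structure canonically from $S\setminus\ol\cV$ to $S\setminus\cV^s$, only the extension across $\Lambda$ remains. Given $q\in\Lambda$, I choose a closed topological disk $D\sbs S$ containing $q$ in its interior, bounded by a piecewise smooth Jordan curve disjoint from the closed nowhere-dense set $\ol\cV$ (cf.\ Lemma~\ref{lem:metricstructureofG}). Uniformize the planar Riemann surface $\Int(D)\setminus\Lambda$ by a conformal map $\psi\co\Int(D)\setminus\Lambda\to\D\setminus E$, where $\D$ is the unit disk and $E\sbs\D$ is a compact set, with $\psi$ extending continuously to send $\partial D$ homeomorphically onto $\partial\D$. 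The goal is to show that $E$ has absolute area zero: Theorem~\ref{thm:removability1}, applied locally to the composition of $\psi^{-1}$ with existing charts on $S\setminus\cV^s$, will then furnish the chart on $\Int(D)$ extending the conformal structure across $\Lambda\cap D$, and its uniqueness clause delivers both global uniqueness and compatibility of overlapping charts.

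To verify the hypotheses of Theorem~\ref{thm:absareazero}, fix a decreasing sequence $r_n\searrow 0$. For each $q'\in\Lambda\cap D$ let $V_n(q')\sbs S$ be a thin open topological-disk neighborhood of $\CB(q';r_n)$ with Jordan boundary in $S\setminus\ol\cV$ (available once $r_n$ is so small that $\CB(q';r_n)$ is a subdendrite of $G$ and the chosen thickness avoids $\ol\cV$), and set $A_n(q'):=V_n(q')\setminus\ol{V_{n+1}(q')}$, an annular region in $S\setminus\ol\cV$. By Remark~\ref{rmk:chains-in-Lambda}, any two points of $\Lambda\cap D$ at $d_G$-distance less than $2r_n$ share the same $\CB(\,\cdot\,;r_n)$; with a consistent choice of thickenings, $A_n(\,\cdot\,)$ therefore depends only on the equivalence class of $q'$ at level $n$, yielding a finite or countable family of pairwise disjoint annular regions $A_n^1,A_n^2,\ldots$ at each level. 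The sets $U_n:=\bigcup_j\psi(A_n^j)\sbs\D$ then satisfy conditions (a) and (b) of Theorem~\ref{thm:absareazero}, and the nested-chain intersection recovers precisely~$E$.

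The crux is condition (c): for every nested chain $(A_n(q'))_n$ the moduli must sum to infinity. Applying the length-area inequality~\eqref{eq:modest} with the Euclidean conformal metric $\nu\equiv 1$ pulled back from $P$, any crossing path in $A_n(q')$ has $\nu$-length bounded below by a constant multiple of $r_n-r_{n+1}$ (it must traverse a scar-distance $\ge r_n-r_{n+1}$), while the $\nu$-area of $A_n(q')$ can be bounded above by a thin band of width $\sim(r_n-r_{n+1})$ about the one-dimensional set $\CB(q';r_n)$ (contributing $\sim\cm(q';r_n)(r_n-r_{n+1})$) together with small disks near the $\cn(q';r_n)$ endpoints in $\CC(q';r_n)$ (contributing $\sim\cn(q';r_n)(r_n-r_{n+1})^2$). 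This gives
\[
\mod A_n(q')\;\gtrsim\;\frac{r_n-r_{n+1}}{\cm(q';r_n)+(r_n-r_{n+1})\,\cn(q';r_n)}.
\]
Choosing $(r_n)$ as a sufficiently fine decreasing partition of $(0,r_1]$, the right-hand side bounds from below a constant multiple of a lower Riemann sum for the integral in~\eqref{eq:generaldivint1}; its assumed divergence therefore forces $\sum_n\mod A_n(q')=\infty$. Theorem~\ref{thm:absareazero} yields that $E$ has absolute area zero, and Theorem~\ref{thm:removability1} completes the proof.

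The principal obstacle is the modulus estimate above: faithfully translating the combinatorial scar data $\cm(q';r_n)$ and $\cn(q';r_n)$ into sharp length and area bounds for $A_n(q')$ in spite of the potentially complicated local geometry of the scar near a singular point. The technical work is in showing that the thickness of $V_n(q')$ can be chosen small enough that the Euclidean area of $A_n(q')$ is dominated by the two contributions identified above, and that crossing paths cannot evade the length bound by shortcutting through regions far from the scar.
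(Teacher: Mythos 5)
Your overall strategy (local planar uniformization, nested annuli around thickened neighborhoods of the sets $\CB(\cdot;r)$, length--area lower bounds for their moduli, McMullen's criterion, then removability) is the same as the paper's, but the construction of the annuli has a genuine structural flaw. You set $A_n(q')=V_n(q')\setminus\ol{V_{n+1}(q')}$, where $V_n(q')$ thickens the level-$n$ component $\CB(q';r_n)$ and $V_{n+1}(q')$ thickens the level-$(n+1)$ component $\CB(q';r_{n+1})$. A level-$n$ component typically contains several level-$(n+1)$ components, so for $q',q''$ with $\CB(q';r_n)=\CB(q'';r_n)$ but $\CB(q';r_{n+1})\neq\CB(q'';r_{n+1})$ you get $V_n(q')=V_n(q'')$ with two different (disjoint) inner disks removed: $A_n(q')$ and $A_n(q'')$ overlap, your claim that $A_n(\cdot)$ depends only on the level-$n$ class is false, and each such $A_n(q')$ actually contains the sibling level-$(n+1)$ thickening together with its points of $\Lambda$, so it is not contained in $S\setminus\ol\cV$. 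Consequently $U_n$ is not a finite union of disjoint unnested annular regions and hypotheses a) and b) of Theorem~\ref{thm:absareazero} fail as written. The paper's device repairs exactly this: both radii of the $k$-th annulus are taken inside the same complementary interval of $\NCL$, namely $\AnnL(q;r_{k+1}+\veps_k,\,r_k-\veps_k)$, so inner and outer disks are neighborhoods of the \emph{same} component, the annulus depends only on the class $\erk{q}$, the annuli at each level are disjoint, and the whole class lies in the bounded complementary component. You also need to check that every nested chain of components of the $U_n$ shrinks to an actual point of $\Lambda$ (so that condition c) can be tested against hypothesis~(\ref{eq:generaldivint1}) at that point), and you never use the clopenness of $\Lambda$ in $\cV^s$, which is needed to choose $D$ with $\Int(D)\cap\cV^s\sbs\Lambda$; without it $\Int(D)\setminus\Lambda$ need not carry the given conformal structure.

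The divergence step is also not justified as stated. Your per-annulus bound samples the data at the single radius $r_n$, and you invoke a ``lower Riemann sum'': but a lower Riemann sum is bounded \emph{above} by the integral, so divergence of the integral in~(\ref{eq:generaldivint1}) does not by itself force divergence of your sum for a given partition. Moreover at a carelessly chosen $r_n$ the quantity $\cn(q';r_n)$ can be infinite (non-planar radii), making the bound vacuous. What rescues the argument is precisely the structure the paper establishes in Lemma~\ref{lem:generalplanarradius}: the planar radii $\PRq$ form an open full-measure set on which $\cn$ is finite and locally constant and $\cm$ varies regularly, so the integrand is bounded by $M/2r$ on compact subsets of $(0,\br)$ and continuous off a compact null set; only then do suitably chosen partitions (planar for all of the finitely many classes at each scale, since one sequence $(r_n)$ must serve every chain) capture the integral. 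The paper sidesteps this entirely by integrating the length--area estimate over intervals of planar radii (Theorem~\ref{thm:module-bound}) and then trimming the fixed partition $\NCL$ by controlled amounts $\veps_k$. Together with the collar foliation that underlies the crossing-length and area bounds you defer as ``technical work'', this is where the substance of the proof lies, and your proposal leaves it open.
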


\begin{rmk}
\label{rmk:single-point}
If~$\Lambda=\{q\}$ is a single point, then $\cmqr=\meas{B_G\qr}$ and
$\cnqr$ is the number of points of~$G$ at distance~$r$ from~$q$: the
theorem thus includes the corresponding result of~\cite{O1} as a
special case.
\end{rmk}

\subsection{Examples}
\label{subsec:examples}

\begin{example}
\label{ex:simple}
In the plain paper-folding scheme of this example, there is a
countable set of singularities accumulating on a single
point. Theorem~\ref{thm:generalmain} will be used to show that the
conformal structure on the set of planar points extends uniquely to
the whole of the paper sphere~$S$.

Let~$P$ be the unit square $\{x+iy\co x,y\in[0,1]\}$ in~$\C$. The
two vertical sides of the square are paired together, and the top side
is folded in half. The segment pairings on the bottom
side of~$P$ are depicted in Figure~\ref{fig:seq-example}: in
each of the shaded regions, the pairings are shown on the
interval on the right of the figure.

\begin{figure}[htbp]
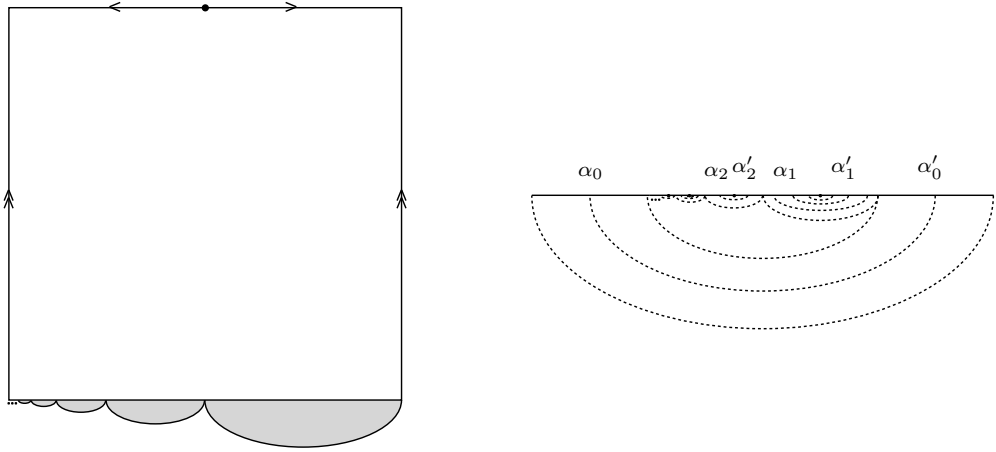

\lab{1}{\alpha_0}{b} \lab{2}{\alpha_0'}{b}
\lab{3}{\alpha_1}{b} \lab{4}{\alpha_1'}{b}
\lab{5}{\alpha_2}{b} \lab{6}{\alpha_2'}{b}
\begin{center}
\pichere{0.8}{seq-example}
\end{center}
\caption{A paper-folding scheme with a sequence of singularities}
\label{fig:seq-example}
\end{figure}

To describe the pairings on that interval, define
subintervals~$\alpha_k$ and $\alpha'_k$ of~$[0,1]$ for $k\ge 0$ by
$\alpha_0=[0,1/4]$, $\alpha_0'=[3/4,1]$, and 
\[\alpha_k=\left[\frac14+\frac{1}{2^{k+1}}, \frac14+\frac{3}{2^{k+2}}\right], \quad
\alpha_k'=\left[\frac14+\frac{3}{2^{k+2}}, \frac14+\frac{1}{2^k}\right] \qquad
(k\ge 1),
\]
so that $|\alpha_k|=|\alpha_k'|=1/2^{k+2}$ and $\sum_{k\ge
  0}|\alpha_k|=1/2$. For~$k\ge 1$, write
$\xi_k=\frac14+\frac{3}{2^{k+2}}$, the common endpoint of~$\alpha_k$
and~$\alpha_k'$. 

Now define embeddings $\vphi_j\co[0,1]\to\C$ for $j\ge 0$ by
$\vphi_j(x)=(x+1)/2^{j+1}$, so that $\vphi_j([0,1])=[1/2^{j+1},
  1/2^{j}]$. The segment pairings on the bottom side of~$P$ are then
given by $\ssegpair{\alpha_{j,k}}$ for $j,k\ge0$, where
$\alpha_{j,k}=\vphi_j(\alpha_k)$ and
$\alpha_{j,k}'=\vphi_j(\alpha_k')$. Observe that
$|\alpha_{j,k}|=|\alpha_{j,k}'|=1/2^{j+k+3}$, so that $\sum_{j,k\ge
  0}|\alpha_{j,k}|=1/2$.

Let~$S$ be the paper sphere corresponding to this paper-folding
scheme~$(P,\cP)$ (which is clearly plain), $\pi\co P\to S$ be the
projection, and~$G$ be the scar. Write
$a_{j,k}=\pi(\alpha_{j,k})=\pi(\alpha_{j,k}')$ for $j,k\ge 0$, and
$a$ for the projection of the vertical edges of~$P$. The
scar is depicted in Figure~\ref{fig:seq-example-scar}. Its non-planar
points are as follows:
\begin{itemize}
\item A valence 1 vertex at $y=\pi(1/2+i)$ and a valence 2 vertex at
  $z=\pi(i)=\pi(1+i)$;
\item Valence 1 vertices at $x_{j,k}=\pi(\vphi_j(\xi_k))$ for $j\ge 0$
  and $k\ge 1$;
\item Singularities at $s_j = \pi(\vphi_j(1/4))$ for $j\ge 0$; and
\item A singularity at $s = \pi(0)=\pi(1)=\pi(1/2)=\pi(1/4)=\cdots$.

\begin{figure}[htbp]
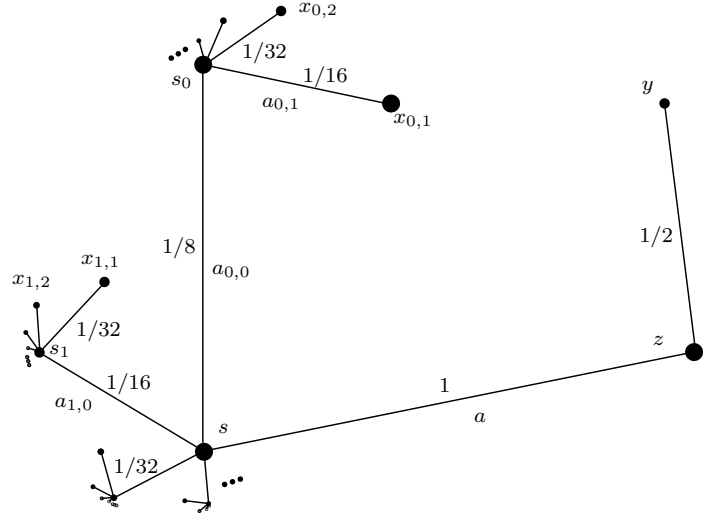

\lab{1}{1}{} \lab{2}{1/2}{} \lab{4}{1/8}{} \lab{5}{1/16}{}
\lab{6}{1/32}{}
\lab{s}{s}{} \lab{t}{s_0}{} \lab{u}{s_1}{}
\lab{a}{a_{0,0}}{} \lab{b}{a_{0,1}}{} \lab{c}{a_{1,0}}{} \lab{d}{a}{}
\lab{x}{x_{0,1}}{} \lab{y}{x_{0,2}}{} \lab{z}{x_{1,1}}{}
\lab{w}{x_{1,2}}{} 
\lab{p}{y}{} \lab{r}{z}{}
\begin{center}
\pichere{0.55}{seq-example-scar}
\end{center}
\caption{The scar of the paper-folding scheme of
  Figure~\ref{fig:seq-example}}
\label{fig:seq-example-scar}
\end{figure}
\end{itemize}

Theorem~\ref{thm:generalmain} will now be applied with
$\Lambda=\cV^s=\{s\}\cup\{s_j\,:\,j\ge 0\}$. In
order to do this, it is necessary to understand the
sets~$\CBqr$ and the cardinality $\cnqr$ of
$\CCqr$ for $q\in\Lambda$.

Observe first, using Remark~\ref{rmk:chains-in-Lambda}, that if
$1/16<r\le1/8$ then~$\CBqr$ is independent
of~$q\in\Lambda$, and consists of the union of all of the
edges~$a_{j,k}$ together with a segment of the edge~$a$ of
length~$r$. Hence $\cmqr =1+2r$
and~$\cnqr=1$.

If $1/32<r\le1/16$ then there are two possibilities:
\begin{enumerate}[1)]
\item If $q=s_0$ then $\CBqr$ is the union of the edges~$a_{0,k}$ for
  $k\ge 1$, together with a segment of the edge~$a_{0,0}$ of
  length~$r$. Hence $\cmqr =1/4+2r$ and~$\cnqr=1$.
\item Otherwise, $\CBqr$ is the union of the
  edges~$a_{j,k}$ for $j\ge 1$ and $k\ge 0$, together with segments
  of~$a_{0,0}$ and~$a$ of length~$r$. Hence
  $\cmqr =1/2+4r$ and $\cnqr=2$.
\end{enumerate}
Therefore if $1/32<r\le 1/16$ then $\cmqr  +
r\cdot\cnqr \le 1/2 + 6r$.

By a similar argument, if $1/2^{j+4}<r\le 1/2^{j+3}$ for some~$j\ge 0$,
then both $\cmqr $ and $r\cdot\cnqr$ are maximised
when $s\in\CBqr$; in this case,
$\cmqr =1/2^j + 2(j+1)r$ and
$\cnqr=j+1$. Therefore
\[\cmqr +r\cdot\cnqr \le
\frac{1}{2^j}+3(j+1)r \qquad \left(\frac{1}{2^{j+4}} < r \le
\frac{1}{2^{j+3}}\right). \]

Hence, for all~$q\in\Lambda$,
\begin{eqnarray*}
\int_{1/2^{j+4}}^{1/2^{j+3}} \frac
{\rmd r}
{\cmqr +r\cdot\cnqr}
&\ge&
\int_{1/2^{j+4}}^{1/2^{j+3}} \frac
{\rmd r}
{\frac{1}{2^j}+3(j+1)r} \\
&=&
\frac{1}{3(j+1)}\ln\left(\frac{22+6j}{19+3j}\right)\\
&\ge& \frac{\ln(22/19)}{3(j+1)},
\end{eqnarray*}
so that $\displaystyle{\int_0 \frac
{\rmd r}
{\cmqr +r\cdot\cn\qr}}$ diverges.

\medskip\medskip

It follows from Theorem~\ref{thm:generalmain} that there is a unique
conformal structure on the paper sphere~$S$ which agrees with that
induced by the Euclidean structure on the set of planar points.

\end{example}

\begin{rmk}
If the sequence of lengths in the above example decreases polynomially
rather than exponentially, then the integral converges. The authors do
not know whether or not the conformal structure extends uniquely
across $\cV^s$ in this case.
\end{rmk}

\begin{example}
\label{ex:cantor}
An example is now sketched in which the paper sphere~$S$ has a Cantor
set of singularities across which the conformal structure extends
uniquely. 

As in Example~\ref{ex:simple}, the polygon~$P$ is the unit
square~$\{x+iy\,:\,x,y\in[0,1]\}$ in~$\C$, the two vertical sides
of~$P$ are paired together, and the top side is folded in half. 

To describe the pairings on the bottom side~$[0,1]\sbs\C$ of~$P$,
first set~$E_0=[2/3,1]$, and then let $E_{i,j}$ for $i\ge 1$ and $0\le
j <2^{i-1}$ be a family of disjoint closed intervals with
$|E_{i,j}|=2/3^{i+1}$ chosen as in the construction of the standard
middle-thirds Cantor set in~$[0,2/3]$: thus $E_{1,0}=[2/9,4/9]$,
$E_{2,0}=[2/27,4/27]$, $E_{2,1}=[14/27, 16/27]$, and so on: the
intervals $E_{i,j}$ are the middle thirds of the complementary
components in $[0,2/3]$ of the union of all intervals $E_{i',j'}$ with
$i'<i$. (See the top section of Figure~\ref{fig:cantor-example}.)

Now subdivide each~$E_{i,j}$ into four subintervals of equal
length~$1/(2\cdot 3^{i+1})$, denoted $\beta_{i,j}$, $\alpha_{i,j}$,
$\alpha'_{i,j}$, and $\gamma_{i,j}$ from left to right. 

The pairings along the bottom side of~$P$ can be described succintly
as follows: $\alpha_{i,j}$, $\beta_{i,j}$ and $\gamma_{i,j}$ are
paired with $\alpha'_{i,j}$, $\beta'_{i,j}$ and $\gamma'_{i,j}$, where
$\beta'_{i,j}$ and $\gamma'_{i,j}$ are subintervals of~$E_0$ chosen in
such a way that the paper-folding scheme is plain.

To be more precise, write $\beta_{i,j}=[\xi_{i,j},\zeta_{i,j}]$,
$\alpha_{i,j}=[\zeta_{i,j}, \lambda_{i,j}]$,
$\alpha'_{i,j}=[\lambda_{i,j}, \zeta'_{i,j}]$, and
$\gamma_{i,j}=[\zeta'_{i,j}, \eta_{i,j}]$. Then
$\beta'_{i,j}:=[\zeta''_{i,j},\xi'_{i,j}]=[1-\lambda_{i,j}/2,
  1-\xi_{i,j}/2]$, and $\gamma'_{i,j}:=[\eta'_{i,j},
  \zeta''_{i,j}]=[1-\eta_{i,j}/2, 1-\lambda_{i,j}/2]$. The pairings
along the bottom side of~$P$ are then given by
$\ssegpair{\alpha_{i,j}}$, $\ssegpair{\beta_{i,j}}$, and
$\ssegpair{\gamma_{i,j}}$ for $i\ge 1$ and $0\le j < 2^{i-1}$ (see
Figure~\ref{fig:cantor-example}).

\begin{figure}[htbp]
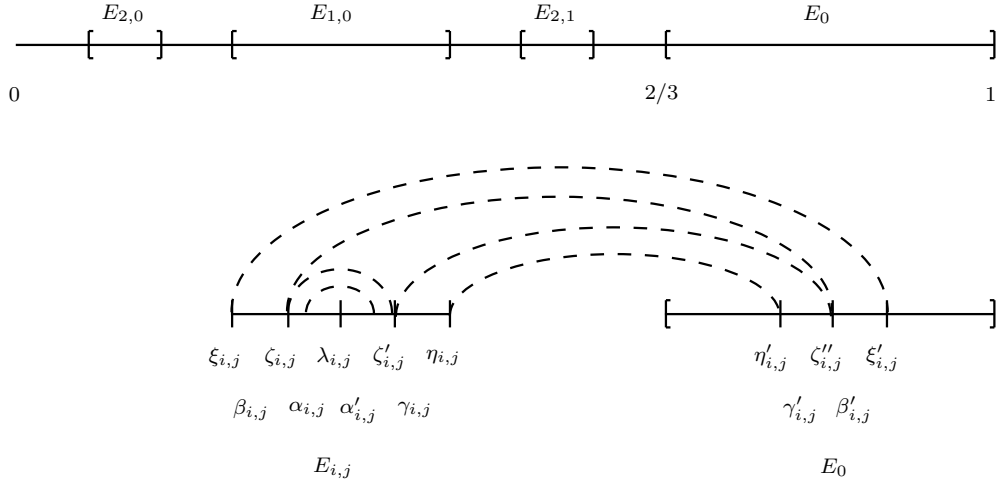

\lab{0}{0}{}
\lab{1}{1}{}
\lab{2}{2/3}{}
\lab{3}{E_0}{}
\lab{4}{E_{1,0}}{}
\lab{5}{E_{2,0}}{}
\lab{6}{E_{2,1}}{}
\lab{7}{E_{i,j}}{}
\lab{8}{E_0}{}
\lab{9}{\xi_{i,j}}{}
\lab{a}{\zeta_{i,j}}{}
\lab{b}{\lambda_{i,j}}{}
\lab{c}{\zeta'_{i,j}}{}
\lab{d}{\eta_{i,j}}{}
\lab{e}{\eta'_{i,j}}{}
\lab{f}{\zeta''_{i,j}}{}
\lab{g}{\xi'_{i,j}}{}
\lab{h}{\beta_{i,j}}{}
\lab{i}{\alpha_{i,j}}{}
\lab{j}{\alpha'_{i,j}}{}
\lab{k}{\gamma_{i,j}}{}
\lab{l}{\gamma'_{i,j}}{}
\lab{m}{\beta'_{i,j}}{}

\begin{center}
\pichere{0.8}{cantor-example}
\end{center}
\caption{Construction of a paper sphere with a Cantor set of singularities}
\label{fig:cantor-example}
\end{figure}

Let~$S$ be the paper sphere corresponding to this (plain)
paper-folding scheme, $\pi\co P\to S$ be the projection and $G$ be the
scar.  The vertices of~$G$ are all regular points, and are as follows:
\begin{itemize}
\item A valence 1 vertex at $\pi(1/2+i)$ and valence two vertices at
  $\pi(i)=\pi(1+i)$ and $\pi(0)=\pi(1)$;
\item Valence 1 vertices at $w_{i,j}:=\pi(\lambda_{i,j})$ for $i\ge 1$
  and $0\le j<2^{i-1}$; and 
\item Valence 3 vertices at
  $z_{i,j}:=\pi(\zeta_{i,j})=\pi(\zeta'_{i,j})=\pi(\zeta''_{i,j})$ for
  $i\ge 1$ and $0\le j<2^{i-1}$.
\end{itemize}
For each~$i\ge 1$ and $0\le j<2^{i-1}$, $\pi(E_{i,j})$ is a triod with
central vertex~$z_{i,j}$, and ends $w_{i,j}$,
$x_{i,j}:=\pi(\xi_{i,j})=\pi(\xi'_{i,j})$, and
$y_{i,j}:=\pi(\eta_{i,j})=\pi(\eta'_{i,j})$. 
The singular set~$\cV^s$ consists of all of the accumulation points of
vertices: that is,
\[\cV^s = \overline{\bigcup_{i,j}\{x_{i,j}, y_{i,j}\}}.\]
$\pi\vert_{E_0}$ is an isometric embedding of $E_0$ into~$G$, and
$\cV^s$ is a standard middle-thirds Cantor set in~$\pi(E_0)$. 

Let~$\Lambda=\cV^s$. A similar argument to that of
Example~\ref{ex:simple} can be used to show that the complex structure
on $S\setminus\cV^s$ extends uniquely across~$\Lambda$: if $1/(2\cdot
3^{k+2}) < r \le 1/(2\cdot 3^{k+1})$ for $k\ge 0$, then $B_G\lr$ has
$2^k$ connected components, each of which has measure bounded above by
$1/3^k + 4r$ and at most three boundary points in~$G$. Therefore, for
all~$q\in\Lambda$, 
\[\cmqr + r\cdot \cnqr \le \frac{1}{3^k} + 7r \qquad
\left(\frac{1}{2\cdot 3^{k+2}} < r \le \frac{1}{2\cdot 3^{k+1}}
\right),\] so that
\[\int_{1/2\cdot3^{k+2}}^{1/2\cdot3^{k+1}} \frac
{\rmd r}
{\cmqr +r\cdot\cnqr} \ge \frac{\ln(39/25)}{7}\]
for all $k\ge 0$, and~(\ref{eq:generaldivint1}) follows.

\end{example}

\begin{rmk}
In both of the examples, the identifications respect the horizontal
and vertical structure of the square~$P$. Therefore the quadratic
differential $\mathrm{d}z^2$ on~$P$ projects to an $L^1$ quadratic differential
on the quotient (with $L^1$ norm $1$) which is holomorphic away from
the singular set and has simple poles at the fold points.
\end{rmk}

\subsection{Proof of Theorem~\ref{thm:generalmain}}
\label{subsec:proof-generalmain}

\subsubsection{Preliminaries}
In this section the technical tools needed for the proof are
developed: the main focus is on the way in which the number and
structure of the components of $B_G(\Lambda;r)$ vary with~$r$.

\begin{defn}[Injectivity radius]
\label{defn:inj-radius}
A number~$\br>0$ is called an {\em injectivity radius} for~$\Lambda$
if the closure $\ol\CBqr$ of $\CBqr$ is a dendrite, and a
proper subset of~$G$, for every~$q\in\Lambda$ and $r\in(0,\br)$.
\end{defn}

\begin{lem}
\label{lem:injectivity-radius}
An injectivity radius for~$\Lambda$ exists.
\end{lem}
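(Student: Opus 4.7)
I would obtain $\bar r$ as the minimum of two thresholds, one for each property required by Definition~\ref{defn:inj-radius}. The arguments rest on two facts: $\Lambda$ is a closed subset of $G$ with $\meas{\Lambda}=0$ (since $\Lambda$ is clopen in $\cV^s$, which is closed with zero $\rmm_G$-measure by Lemma~\ref{lem:metricstructureofG}(a)), and $G$ is a compact local dendrite (Theorem~\ref{thm:top-structure}).

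For properness ($\ol{\CBqr}\subsetneq G$), I would use continuity of measure. Since $\bigcap_{r>0}B_G(\Lambda;r)=\Lambda$ (because $\Lambda$ is closed) and $\rmm_G$ is a finite measure on $G$, continuity from above gives $\meas{B_G(\Lambda;r)}\to 0$ as $r\to 0$. Choose $r_0>0$ with $\meas{B_G(\Lambda;r_0)}<\meas{G}$; then some $p\in G$ satisfies $d_G(p,\Lambda)\geq r_0$. For any $q\in\Lambda$ and $r<r_0$, the inclusion $\ol{\CBqr}\sbs\{x:d_G(x,\Lambda)\leq r\}$ shows $p\notin\ol{\CBqr}$, giving $\ol{\CBqr}\subsetneq G$.

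For the dendrite property, I would first combine the local dendrite structure with compactness to produce a uniform $\rho>0$ such that, for every $x\in G$, the ball $B_G(x;\rho)$ is contained in some sub-dendrite of $G$ (standard finite-subcover / Lebesgue-number argument applied to an open cover of $G$ by balls lying in dendrite neighborhoods). The crucial step is then to show that for all sufficiently small $r$, every component of $B_G(\Lambda;r)$ has $d_G$-diameter less than $\rho$. Supposing not, one has $r_n\to 0$ and components $A_n$ of $B_G(\Lambda;r_n)$ with $\diam_{d_G}A_n\geq\rho$. Each $A_n$ is open and connected in the locally path-connected space $G$, hence arcwise connected, so it contains an arc $\gamma_n$ with $|\gamma_n|_G\geq\rho/2$. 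By Lemma~\ref{lem:metricstructureofG}(c,d), $\mu^1_G(\gamma_n)=|\gamma_n|_G\geq\rho/2$, so $\meas{B_G(\Lambda;r_n)}\geq\rho$, contradicting measure-continuity. Once the diameter bound holds, $\CBqr\sbs B_G(q;\rho)$ lies in a sub-dendrite $D$, and $\ol{\CBqr}$ is a subcontinuum of $D$, hence itself a dendrite by Remark~\ref{rmk:dendrites}(c).

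The main obstacle is that $\CBqr$ is a component of $B_G(\Lambda;r)$ rather than of the single ball $B_G(q;r)$, and so could a priori stretch far from $q$ along a chain of mutually close singular points — precisely the phenomenon in Example~\ref{ex:simple}. A purely topological diameter bound is awkward; the cleanest route is to convert diameter into arc length (using that arcs in $G$ have $\mu^1_G$-measure equal to their $d_G$-length) and then into $\rmm_G$-measure, so that an overlarge component would force the thin neighborhood $B_G(\Lambda;r)$ of the null set $\Lambda$ to carry too much measure.
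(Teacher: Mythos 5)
Your proposal is correct, but it reaches the conclusion by a genuinely different route than the paper. The paper's proof is structural and essentially constructive: it invokes the decomposition $G=C\cup\Gamma$ of Theorem~\ref{thm:top-structure}, chooses finitely many arcs $\gamma_i$ in the edges of $C$ (or a single arc if $C=\emptyset$) disjoint from the closed, totally disconnected set $\ol\cV$, observes that the components of $G\setminus\bigcup_i\interior{\gamma_i}$ are dendrites by the separation criterion of Remarks~\ref{rmk:dendrites}~a), and takes $\br=\min_i d_G(\gamma_i,\ol\cV)$; then every point of $\ol\CBqr$ is within $r<\br$ of $\Lambda\sbs\ol\cV$, so $\ol\CBqr$ avoids the cut arcs and sits inside one of these dendrites, which also gives properness for free. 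You instead work at the level of the local-dendrite property plus measure: a Lebesgue-number argument produces a uniform scale $\rho$ at which balls lie in dendrite neighborhoods, and the key quantitative input is Lemma~\ref{lem:metricstructureofG}~c),d) (arcs have $\rmm_G$-measure twice their length), which converts a hypothetical large-diameter component of $B_G(\Lambda;r)$ into a lower bound on $\meas{B_G(\Lambda;r)}$, contradicting $\meas{\Lambda}=0$ and continuity of the finite measure $\rmm_G$ from above; properness is handled by the same measure continuity. Your argument needs two standard facts the paper does not use here — that every point of a local dendrite has a dendrite neighborhood (this is just the definition, supplied by Theorem~\ref{thm:top-structure}) and that open connected subsets of a Peano continuum are arcwise connected (available from the cited Kuratowski/Whyburn references) — whereas the paper trades these for its global structure theorem. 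What your route buys is independence from the specific decomposition $G=C\cup\Gamma$ and a quantitative mechanism (small measure of $B_G(\Lambda;r)$ forces small components) that is in the spirit of the rest of the paper; what it loses is the explicit value $\br=\min_i d_G(\gamma_i,\ol\cV)$, since your threshold comes from a compactness/contradiction argument. One small point to make explicit when writing it up: after the diameter bound, note that $\diam\ol\CBqr=\diam\CBqr<\rho$, so the closure (not just $\CBqr$ itself) lies in $B_G(q;\rho)$ and hence in the ambient dendrite, before applying Remarks~\ref{rmk:dendrites}~c).
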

\begin{proof}
By Theorem~\ref{thm:top-structure}, $G$ can be written as
$G=C\cup\Gamma$, where~$C$ is a finite connected graph and~$\Gamma$ is
a union of countably many dendrites, each attached to~$C$ at a single
point, which is an element of~$\ol\cV$.

If~$C\not=\emptyset$ then let $e_1,\ldots,e_N$ denote the edges
of~$C$, and choose arcs $\gamma_i\sbs e_i$ which are disjoint
from~$\ol\cV$ (this is possible since~$\ol\cV$ is a closed and totally
disconnected subset of~$G$). If~$C=\emptyset$, then let~$N=1$ and
$\gamma_1$ be any arc in~$G\setminus\ol\cV$.

Each connected component of $G\setminus\bigcup_{i=1}^N \interior{\gamma_i}$
is a continuum in which any two points are separated by a third point,
and is therefore a dendrite by Remarks~\ref{rmk:dendrites}~a).

Let~$\br =\min_{1\le i\le N}d_G(\gamma_i, \ol\cV)>0$. Then if
$r\in (0,\br)$ and $q\in\Lambda$, the set $\ol\CBqr$ is a subcontinuum
of the component of $G\setminus\bigcup_{i=1}^N\interior{\gamma_i}$
containing~$q$, and is therefore a dendrite by
Remarks~\ref{rmk:dendrites}~c) as required.
\end{proof}

Throughout the remainder of the section, $\br>0$ denotes a fixed choice
of injectivity radius for~$\Lambda$.

\begin{defns}[$\nccr$, $\NCL$, $\CLqr$]
For~$r\in(0,\br)$, let~$\nccr$ denote the number of
connected components of~$B_G\lr$. Let~$\NCL\sbs(0,\br)$ denote the set
of values~$r>0$ at which~$\nccr$ is discontinuous (i.e.\ not locally
constant).

Given~$q\in\Lambda$ and $r\in(0,\br)$,
write~$\CLqr=\Lambda\cap\CBqr$, the set of points of~$\Lambda$ in the
connected component of~$B_G\lr$ which contains~$q$.
\end{defns}

\begin{rmk}
\label{rmk:CLambda}
Fix~$r\in(0,\br)$. Each $\CLqr$ is an open subset of~$\Lambda$ since
$\CBqr$ is open in~$G$. If~$q_1,q_2\in\Lambda$, then $\CL(q_1;r)$ and
$\CL(q_2;r)$ are either equal or disjoint. The sets $\CLqr$ with
$q\in\Lambda$ therefore constitute a partition of~$\Lambda$ by clopen
subsets: in particular, each $\CLqr$ is compact.
\end{rmk}

\begin{lem}\mbox{}
\label{lem:ncc}
\begin{enumerate}[a)]
\item $\nccr$ is finite, and is a decreasing function of~$r$.
\item $\NCL$ is finite if~$\Lambda$ is finite, and otherwise consists
  of a decreasing sequence converging to~$0$.
\item Let~$(r_1,r_2)$ be a complementary component of~$\NCL$ and
  let~$q\in\Lambda$. Then the set $\CLqr$ is constant for
  $r\in(r_1,r_2)$.
\item $\CCqr$ is the set of points of~$G$ which are exactly
  distance~$r$ from~$\CLqr$.
\end{enumerate}
\end{lem}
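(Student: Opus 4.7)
All four statements are topological and follow from the facts that $\Lambda$ is closed in $G$ (hence compact), is totally disconnected, and that $G$ is a compact strictly intrinsic metric space whose balls $\CBqr$ are dendrites by the choice of $\br$.

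For part (a), the plan is to use compactness of $\Lambda$ to cover it by finitely many open balls $B_G(q_i;r/2)$ with $q_i\in\Lambda$, $1\le i\le n$. Every connected component of $B_G(\Lambda;r)$ contains some $q\in\Lambda$ together with all of $B_G(q;r)$, hence contains some $q_i$; so $\nccr\le n<\infty$. For monotonicity, the inclusion $B_G(\Lambda;r)\subset B_G(\Lambda;r')$ (for $r<r'$) induces a map from components of the former to components of the latter, which is surjective because every component of $B_G(\Lambda;r')$ meets $\Lambda$ and so meets at least one component of $B_G(\Lambda;r)$. Thus $\nccr\ge\opna{ncc}(r')$. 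In particular $\NCL$ is a discontinuity set of a nonincreasing integer-valued function, so is at most countable and accumulates only at $0$; if $\Lambda$ is finite then $\nccr\le|\Lambda|$ takes finitely many values and $\NCL$ is finite, giving half of part (b). For the converse direction in (b), when $\Lambda$ is infinite the goal is to show $\nccr\to\infty$ as $r\to 0$: since $\Lambda$ is compact, infinite, and totally disconnected, a short induction (two points of a compact totally disconnected Hausdorff space are separated by a clopen set, so one may repeatedly split an infinite piece) produces, for each $N\in\N$, a clopen partition $\Lambda=U_1\sqcup\cdots\sqcup U_N$. Setting $\delta=\min_{i\ne j}d_G(U_i,U_j)>0$, the triangle inequality implies that $B_G(U_i;r)$ and $B_G(U_j;r)$ are disjoint whenever $r<\delta/2$, whence $\nccr\ge N$.

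Part (c) is a bookkeeping consequence of the component map of part (a). For $r_1<r<r'<r_2$ the surjection of components is in fact a bijection, because $\nccr=\opna{ncc}(r')$. Consequently the component $\CB(q;r')$ contains $\CBqr$ but no other component of $B_G(\Lambda;r)$. Every point of $\Lambda$ lies in some component of $B_G(\Lambda;r)$, so $\Lambda\cap\CB(q;r')=\Lambda\cap\CBqr$, that is $\CL(q;r')=\CLqr$.

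Part (d) will be proved by two inclusions. If $x\in\CCqr=\partial_G\CBqr$, take $y_n\in\CBqr$ with $y_n\to x$. Each $y_n$ lies within $r$ of some $p_n\in\Lambda$, and a nearly length-minimising path from $y_n$ to $p_n$ (exists since $G$ is compact and intrinsic, hence strictly intrinsic) stays in $B_G(\Lambda;r)$ and joins $p_n$ to $y_n\in\CBqr$, forcing $p_n\in\CLqr$. Therefore $d_G(x,\CLqr)=\lim d_G(y_n,\CLqr)\le r$. If $x$ were in $B_G(\Lambda;r)$ it would lie in some component, which has to be $\CBqr$ itself because two disjoint open sets cannot share a limit point of one lying in the other; but $x\notin\CBqr$, so $d_G(x,\Lambda)\ge r$ and equality holds. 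Conversely, suppose $d_G(x,\CLqr)=r$; pick $p\in\CLqr$ realising the distance (the compact set $\CLqr$ is closed since $\Lambda$ is), and a path $\gamma$ of length $r$ from $x$ to $p$. For $t\in(0,r]$, $\gamma(t)$ lies within $r-t<r$ of $p\in\Lambda$ and is joined to $p\in\CBqr$ by a subpath inside $B_G(\Lambda;r)$, hence $\gamma(t)\in\CBqr$; letting $t\to 0$ gives $x\in\ol{\CBqr}$. The same open-component argument precludes $x\in\CBqr$, so $x\in\CCqr$.

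The only nontrivial step I anticipate is the infinite case of part (b), where compactness and total disconnectedness of $\Lambda$ must be combined to produce arbitrarily large clopen partitions with positive inter-piece distance; once that is in hand, parts (a), (c) and (d) are essentially exercises in the definitions together with the ``limit point of one disjoint open set cannot lie in another'' principle.
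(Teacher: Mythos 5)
Your argument is correct, but in two of the four parts it takes a genuinely different route from the paper. For the finiteness in a), the paper shows that every component $K$ of $B_G\lr$ has $\meas{K}\ge 2r$ (using an arc of length $r$ leaving $B_G\qr$, which exists because $r<\br$) and then uses finiteness of $\meas{G}$; you instead cover the compact set $\Lambda$ by finitely many balls $B_G(q_i;r/2)$ and observe that each component swallows a whole ball $B_G(q;r)$ and hence some $q_i$. Both are fine; the paper's version has the advantage that the byproduct $\meas{\CBqr}\ge 2r$ is reused later (Remark~\ref{rmk:upper-bound}), which your covering argument does not provide. For b) in the infinite case, the paper exhibits, below any $r_0$, an explicit discontinuity of $\nccr$ at $r=s/2$, where $(p_1,p_2)$ is a gap of $\Lambda$ in the arc $[q_1,q_2]$ inside the dendrite $\ol\CB(q_1;r_0)$; you instead prove $\nccr\to\infty$ by splitting the compact, infinite, totally disconnected set $\Lambda$ into arbitrarily many nonempty clopen pieces at positive mutual distance. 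Your route avoids the dendrite/injectivity-radius structure entirely (it only needs total disconnectedness of $\Lambda$, which the paper has already recorded), at the cost of invoking the quasicomponent fact for compact Hausdorff spaces, and it leaves implicit (though trivially so) the final step that unboundedness of $\nccr$ near $0$ plus finiteness of $\NCL$ on each $[\veps,\br)$ forces $\NCL$ to be a sequence decreasing to $0$. Parts c) and d) are essentially the paper's arguments (c verbatim; d organized as two inclusions instead of the paper's trichotomy on $d_G(x,\CLqr)$ versus $r$, but with the same path and disjoint-open-sets reasoning). One small repair: your justification that $\CLqr$ is closed (``since $\Lambda$ is'') is not right as stated, since $\CLqr=\Lambda\cap\CBqr$ is the intersection of a closed set with an open set; the correct reason is that no point of $\Lambda$ lies on $\partial_G\CBqr$ (by your own open-component principle), so $\CLqr=\Lambda\cap\ol\CBqr$, or simply that the sets $\CLqr$ form a clopen partition of $\Lambda$ as in Remark~\ref{rmk:CLambda} -- alternatively, exact realization of the distance can be avoided altogether by taking nearly minimizing points and paths.
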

\begin{proof}\mbox{}
\begin{enumerate}[a)]
\item Let~$K$ be a component of $B_G\lr$. It will be shown that
  $\meas{K}\ge 2r$, which establishes that $\nccr$ is finite, since
  $\meas{G}$ is finite.

First observe that $K$ must contain some point~$q$ of~$\Lambda$. For
let~$x$ be any point of~$K$, so that $d_G(x,q)<r$ for
some~$q\in\Lambda$. Since~$d_G$ is strictly intrinsic
(Lemma~\ref{lem:intrinsic-quotient is intrinsic}), there is an arc
in~$G$ from~$q$ to~$x$ of length $d_G(x,q)$, and this arc is
contained in $B_G\lr$: hence $q\in K$ as required.

Since $r<\br$, there is some point~$y\in G\setminus B_G\qr$. As~$d_G$
is strictly intrinsic, there is an arc in~$G$ from~$q$ to~$y$ of
length at least~$r$. The initial length~$r$ subarc of this arc has
interior contained in~$K$, so that $\meas{K}\ge 2r$ by
Lemma~\ref{lem:metricstructureofG}~d) as required.

If $r<s$ then $B_G\lr\sbs B_G(\Lambda;s)$, so that each connected
component of~$B_G\lr$ is contained in a connected component
of~$B_G(\Lambda;s)$. Every component of~$B_G(\Lambda;s)$
contains a point of~$\Lambda$, and hence contains a component of
$B_G\lr$. Therefore~$\nccr$ is decreasing.

\item On any interval bounded away from~$0$, $r\mapsto\nccr$ is therefore a
bounded decreasing positive integer-valued function, and so has only
finitely many discontinuities.

If~$\Lambda$ is finite then $\nccr=\#\Lambda$ for all sufficiently
small~$r$, so that $\NCL$ is finite.

If~$\Lambda$ is infinite then for every $r_0\in(0,\br)$ pick distinct
points~$q_1,q_2\in\Lambda$ with
$d_G(q_1,q_2)<r_0$. Let~$(p_1,p_2)_{\ol\CB(q_1;r_0)}$ be a
complementary component of $\Lambda$ in $[q_1,q_2]_{\ol\CB(q_1;r_0)}$
(cf. Remark~\ref{rmk:dendrites}~b)), and let~$s=d_G(p_1,p_2)$. Then
$\nccr$ has a discontinuity at $r=s/2<r_0$. Hence $\NCL$ is infinite,
and therefore consists of a sequence converging to~$0$.

\item Let~$r_1<r<s<r_2$. As in the proof of~a), every connected
  component of $B_G(\Lambda;s)$ contains a connected component of
  $B_G\lr$. Since the two sets have the same number of connected
  components, every connected component of~$B_G(\Lambda;s)$ contains
  exactly one connected component of~$B_G\lr$: these two components
  must therefore contain the same points of~$\Lambda$.

\item Write~$C=\CLqr$. Let~$x\in G$. If $d_G(x,C)<r$ then all points
  in a neighborhood of $x$ in~$G$ belong to $\CBqr$, and hence~$x$ is
  not in its boundary $\CCqr$; similarly if $d_G(x,C)>r$.

Suppose, then, that $d_G(x,C)=r$, and let~$\gamma$ be an arc of
length~$r$ in~$G$ from~$x$ to a point~$q$
of~$C$. Then~$x\not\in\CBqr$, but points of~$\gamma$ arbitrarily close
to~$x$ do belong to~$\CBqr$, so that~$x$ lies in the boundary $\CCqr$
of $\CBqr$ as required.
\end{enumerate}
\end{proof}

\begin{defn}[Planar radius, $\PRq$]
A number~$r\in(0,\br)$ is said to be a {\em $\ql$-planar radius} if
$r\not\in\NCL$ and all of the points of~$\CCqr$ are planar
points. The set of $\ql$-planar radii is denoted
$\PRq\sbs(0,\br)$. 
\end{defn}

\begin{rmk}
\label{rmk:planar-radius}
By Lemma~\ref{lem:ncc}~c) and~d), if~$(r_1,r_2)$ is a complementary
component of~$\NCL$ and~$q\in\Lambda$, then $\CLqr$ is constant
for~$r\in(r_1,r_2)$, and if~$q'\in \CLqr$
then~$\PR(q')\cap(r_1,r_2)=\PRq\cap(r_1,r_2)$. 
\end{rmk}

\begin{lem}
\label{lem:generalplanarradius}
Let~$q\in\Lambda$. Then $\PRq$ is a full measure open subset
of~$(0,\br)$, and $\cnqr$ is finite and constant on each component of
$\PRq$.
\end{lem}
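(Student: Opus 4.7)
The plan is to work, for each complementary component $(r_1,r_2)$ of $\NCL\cap(0,\br)$, with the fixed $1$-Lipschitz function $\phi(x):=d_G(x,C)$, where $C=\CLqr$ is constant on $(r_1,r_2)$ by Lemma~\ref{lem:ncc}~c). A short connectedness argument from the definitions (every $y\in\CBqr$ can be joined to some $q''\in\Lambda$ by a path in $B_G(\Lambda;r)$, forcing $q''\in C$ and $\phi(y)<r$; conversely every $y$ with $\phi(y)<r$ is path-connected in $B_G(\Lambda;r)$ to a point of $C\subset\CBqr$) yields $\CBqr=\{\phi<r\}$, whence $\ol{\CBqr}=\{\phi\le r\}$ and $\CCqr=\phi^{-1}(r)$, consistent with Lemma~\ref{lem:ncc}~d). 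The crucial claim is that $\phi$ has no local extremum at any planar $x\in\CCqr$. A local minimum would put an arc-neighbourhood of $x$ outside $\ol{\CBqr}$, contradicting $x\in\ol{\CBqr}$. A local maximum at planar $x$ would give an arc-neighbourhood $J\ni x$ contained in $\ol{\CBqr}$; since $\ol{\CBqr}$ is a dendrite (by definition of $\br$) and $x$ is then a non-end point of it, $\ol{\CBqr}\setminus\{x\}$ would separate the two sides of $J$ into different components. But the two sides either both meet the connected open set $\CBqr\subset\ol{\CBqr}\setminus\{x\}$ (furnishing a path around $x$, which contradicts the dendrite separation) or one side has $\phi\equiv r$ on a sub-arc, placing $x$ outside $\ol{\CBqr}$ from that side and again contradicting $x\in\ol{\CBqr}$. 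Hence $\phi$ is strictly monotonic on a short arc through each planar point of $\CCqr$.

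From this local monotonicity, finiteness, openness and local constancy of $\cnqr$ all follow quickly. When $r\in\PRq$, every point of $\CCqr$ is planar, hence isolated in $\CCqr$; being a closed subset of the compact $G$, $\CCqr$ is therefore finite, so $\cnqr<\infty$. For openness, fix $r_0\in\PRq$ with $\CC(q;r_0)=\{x_1,\ldots,x_k\}$, and choose pairwise disjoint arc-neighbourhoods $U_i\ni x_i$ on which $\phi$ is strictly monotonic and which consist of planar points; then $\phi$ restricted to the compact complement $K=G\setminus\bigcup_iU_i$ omits $r_0$, so for $r$ in some neighbourhood of $r_0$ in $(r_1,r_2)$ we have $\phi^{-1}(r)\subset\bigcup_iU_i$, and monotonicity gives exactly one planar point per $U_i$. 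Thus $r\in\PRq$ with $\cnqr=k$, so $\PRq$ is open and $\cnqr$ is locally constant, hence constant on each component.

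Full measure will come from the fact that $\NCL\cap(0,\br)$ is countable (Lemma~\ref{lem:ncc}~b)) and that, on each complementary component $(r_1^j,r_2^j)$ of $(0,\br)\setminus\NCL$ with associated $\phi_j(x)=d_G(x,C_j)$, the non-planar radii are exactly $\phi_j(\bcV)\cap(r_1^j,r_2^j)$. Since $\phi_j$ is $1$-Lipschitz and $\mu^1_G(\bcV)=0$ by Lemma~\ref{lem:metricstructureofG}~a), c), each $\phi_j(\bcV)$ has $1$-dimensional Hausdorff (hence Lebesgue) measure zero, and the countable union together with $\NCL$ is Lebesgue-null. The main obstacle is the dendrite pinch-point argument ruling out local maxima of $\phi$ at planar points of $\CCqr$; once that local monotonicity picture is secured, the remaining steps are routine compactness and measure-theoretic observations.
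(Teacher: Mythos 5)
Your reduction to the complementary components of $\NCL$, your identification $\CBqr=\{\phi<r\}$ and $\CCqr=\phi^{-1}(r)$, and your full-measure argument (non-planar radii in a component are $\phi(\ol\cV)$, $\phi$ is $1$-Lipschitz, $\mu^1_G(\ol\cV)=0$) are all sound, and the last of these is exactly the paper's argument. The gap is in the step you yourself call crucial: from ``$\phi$ has no local extremum at any planar $x\in\CCqr$'' you conclude ``$\phi$ is strictly monotonic on a short arc through each planar point of $\CCqr$''. That inference is invalid for a general continuous, even $1$-Lipschitz, function: you have only excluded extrema at points of the level set $\{\phi=r\}$, and nothing you prove prevents $\phi$ from oscillating, nor $\phi^{-1}(r)$ from accumulating at $x$ (compare $t\mapsto r+\tfrac13 t^{2}\sin(1/t)$, which has no local extremum at $0$, is $1$-Lipschitz, hits the value $r$ arbitrarily close to $0$, and is monotone on no neighbourhood of $0$). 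Since both your finiteness argument (``planar, hence isolated in $\CCqr$'') and your $U_i$/compactness argument for openness and local constancy of $\cnqr$ rest on this monotonicity, the main conclusions are not yet established. The missing ingredient is a structural fact about $\phi$ on a planar arc: if $J$ is a small arc of planar points around $x$ (so $J\cap\ol\cV=\emptyset$, in particular $J\cap C=\emptyset$ where $C=\CLqr$), then uniqueness of arcs in the dendrite forces every arc from $y\in J$ to a point of $C$ to leave $J$ through one of its endpoints, so in an arclength coordinate $\phi|_J$ is the minimum of two affine functions of slopes $+1$ and $-1$, i.e.\ concave and piecewise linear. Granting this, there are no interior local minima, the only possible interior local maximum is a single peak, your cut-point argument correctly shows the peak cannot lie in $\CCqr$, and the strict monotonicity near each planar point of $\CCqr$ --- and with it the rest of your proof --- follows.

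Two smaller points. In the second alternative of your local-maximum argument, ``$\phi\equiv r$ on a sub-arc on one side'' does not place $x$ outside $\ol\CBqr$ (that side lies in $\{\phi\le r\}=\ol\CBqr$, and $x$ may still be approached by $\CBqr$ from the other side); the correct contradiction is at an interior point $y$ of that sub-arc, which lies in $\CCqr\sbs\ol\CBqr$ yet has a whole $G$-neighbourhood on which $\phi\ge r$, hence missing $\CBqr$ --- or one simply notes that the slopes-$\pm1$ structure rules this case out. Finally, the paper avoids the monotonicity discussion altogether: it gets openness of $\PRq$ from compactness of $\ol\cV$ (so the non-planar radii form a closed set), and for $r\in\PRq$ it shrinks $\delta$ so that $(r-\delta,r+\delta)\sbs\PRq$, observes that each ball $B_G(x;\delta)$ with $x\in\CCqr$ consists of planar points, is an interval of measure $4\delta$, and contains no other point of $\CCqr$, then deduces finiteness from $\meas{G}<\infty$ and local constancy of $\cnqr$ by a direct two-sided count; you may find that route quicker to make rigorous than repairing the monotonicity step.
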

\begin{proof}
Since $\NCL$ is either finite or a sequence converging to zero, it
suffices to prove the statement in each complementary
component~$(r_1,r_2)$ of~$\NCL$ in~$(0,\br)$. 

For~$r\in(r_1,r_2)$, $C=\CLqr$ is a fixed compact subset of~$G$ with
$\meas{C}=0$, and $\CCqr$ is the set of points of~$G$ which are
distance~$r$ from~$C$ (Lemma~\ref{lem:ncc}). Define $f\co G\to\R$ by
$f(x)=d_G(x,C)$: then the set of $\ql$-non-planar radii in~$(r_1,r_2)$
is given by $(r_1,r_2)\cap f(\ol\cV)$. Since~$f$ is continuous
and~$\ol\cV$ is compact, the set of $\ql$-non-planar radii is a closed
subset of~$(r_1,r_2)$. Moreover, $f$ is distance non-increasing and
$\mu^1_G(\ol\cV)=0$ by Lemma~\ref{lem:metricstructureofG}, so that
$f(\ol\cV)$ has zero 1-dimensional Hausdorff measure (i.e. Lebesgue
measure). Therefore $\PRq\cap(r_1,r_2)$ is a full measure open subset
of $(r_1,r_2)$ as required.

Now let~$r\in\PRq\cap(r_1,r_2)$, and let~$\delta>0$ be small enough
that $(r-\delta, r+\delta)\sbs\PRq$. Then for each~$x\in\CCqr$, the
ball~$I_x=B_G(x;\delta)$ consists entirely of planar points of~$G$,
and is therefore isometric to an interval of length~$2\delta$. Now
$\ol\CBqr$ is a dendrite and~$d_G$ is strictly intrinsic, so~$I_x$
cannot contain any other point of~$\CCqr$. Since $\meas{I_x}=
4\delta$, $\cnqr=\#\CCqr$ is finite as required.

If~$r'\in(r-\delta,r+\delta)$ then each~$I_x$ contains a point of
$\CC(q;r')$ (at distance $|r'-r|$ from~$x$), and hence
$\cn(q;r')\ge\cnqr$. On the other hand, if~$r'\in(r-\delta/4,
r+\delta/4)$, then $(r'-\delta/2, r'+\delta/2)\sbs\PRq$, and by the
same argument $\cnqr\ge\cn(q;r')$. Therefore $\cnqr$ is
locally constant, and hence constant, on each component of $\PRq$ as
required. 
\end{proof}

\subsubsection{Foliated collaring of~$P$}
\label{subsec:cxstruct-foliatedcollar}
Theorems~\ref{thm:removability1} and~\ref{thm:absareazero} will be
used to show that the conformal structure on $S\setminus\cV^s$
extends across~$\Lambda$. The system of annuli required by the
conditions of Theorem~\ref{thm:absareazero} will be constructed in the
projection~$Q=\pi(\tQ)\sbs S$ of a foliated collar~$\tQ$ of~$P$, which
is described in this section. For simplicity of notation it will be
assumed that~$P$ has a single component: if~$P$ has several
components, the collar can be constructed in each of them
independently. It will also be assumed that the paper-folding scheme
is plain, so that the paper surface is a topological sphere: since all
of the constructions of annuli are carried out locally on a scale
smaller than the injectivity radius~$\br$, only minor modifications
are needed in the non-plain case. The construction here is identical
to that of~\cite{O1}, but is included for completeness.

$\tQ$ is constructed as a union of trapezoids whose bases are the
sides of~$P$; whose vertical sides bisect the angles at the vertices
of~$P$; and which have fixed height~$\bh$, chosen small enough that
the trapezoids are far from degenenerate and intersect only along
their vertical sides. It has a {\em horizontal foliation} by leaves
parallel to~$\partial P$, and a {\em vertical foliation} by leaves
joining the base and the top of each trapezoid: see
Figure~\ref{fig:collar}.

\begin{figure}[htbp]
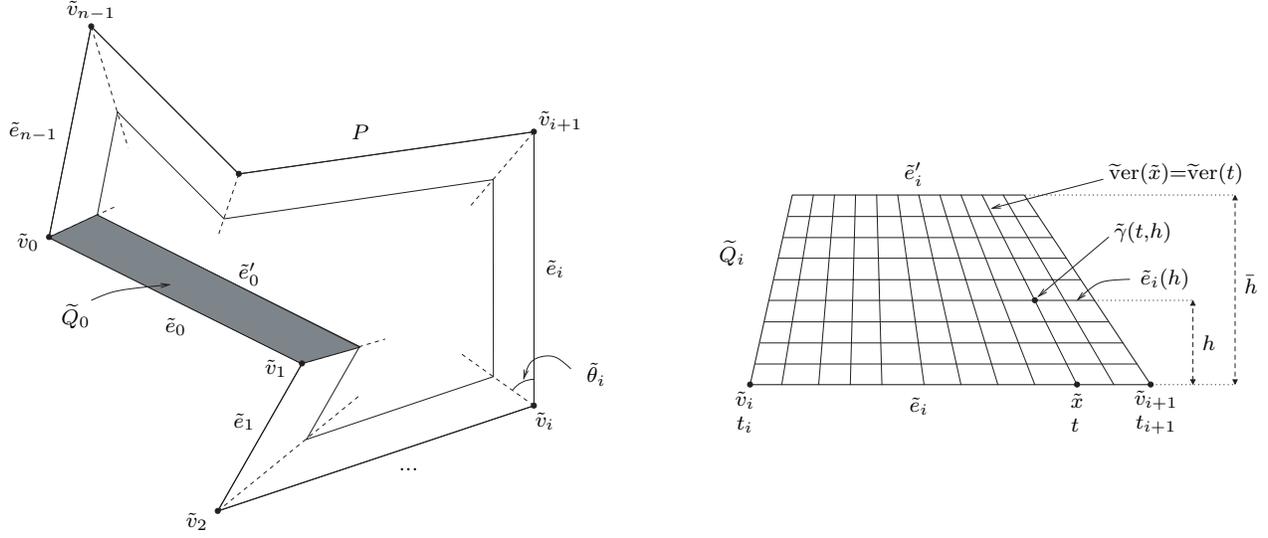

\lab{e0}{\te_0}{}\lab{X}{\te'_0}{}\lab{e1}{\te_1}{}\lab{ei}{\te_{i}}{}
\lab{en1}{\te_{n-1}}{}
\lab{v0}{\tv_0}{}\lab{v1}{\tv_1}{}\lab{v2}{\tv_2}{}\lab{vn1}{\tv_{n-1}}{}
\lab{vi1}{\tv_{i}}{}\lab{vi}{\tv_{i+1}}{}
\lab{P}{P}{}
\lab{Q}{\tQ_0}{}
\lab{ti1}{\ttheta_{i}}{l}
\lab{Qi}{\tQ_i}{r}
\lab{x}{\tx}{}\lab{verx}{\tver(\tx) = \tver(t)}{l}\lab{gxh}{\tgamma(t,h)}{l}
\lab{h}{h}{}\lab{bh}{\bh}{}\lab{Y}{\te'_i}{}\lab{eh}{\te_i(h)}{l}
\lab{t1}{t_i}{}
\lab{tx}{t}{}
\lab{t2}{t_{i+1}}{}
\lab{...}{\ldots}{}
\begin{center}
\pichere{1.0}{collar}
\end{center}
\caption{The collar~$\tQ$ and its foliations}
\label{fig:collar}
\end{figure}

Choose a labeling~$\tv_i$ ($i=0,\ldots,n-1$) of the vertices of~$P$
listed counterclockwise around~$\partial P$, and let~$\te_i$ be the
side of~$P$ with endpoints~$\tv_i$ and~$\tv_{i+1}$ (here and
throughout, subscripts on cyclically ordered objects are taken~$\mod
n$). Write~$L=|\partial P|$, and
let~$\tgamma_0\co[0,L)\to\partial P$ be the order-preserving
parameterization of~$\partial P$ by arc-length with
$\tgamma_0(0)=\tv_0$. Denote by~$t_i\in[0,L)$ the parameter with
$\tgamma_0(t_i)=\tv_i$. 

A {\em trapezoid} is a quadrilateral in~$\C$ with two parallel
sides, which are called its {\em base} and its {\em top}: the other
sides are called {\em vertical sides}. The {\em height} of the
trapezoid is the distance between the parallel lines containing its
base and its top.

Pick a height~$\bh$ small enough that the trapezoids~$\tQ_i$ which
have bases~$\te_i$, heights~$\bh$, and vertical sides along the rays
bisecting the internal angles of~$P$ satisfy:
\begin{enumerate}[a)]
\item The lengths of the tops of the trapezoids are between half and
  twice the lengths of their bases; and
\item The trapezoids intersect only along their vertical sides.
\end{enumerate}
 This height~$\bh$ is an important quantity in the construction, and
 will remain fixed throughout the remainder of the section.  Denote
 the top of~$\tQ_i$ by $\te_i'$, and let~$\ttheta_i$ be half of the
 internal angle of~$\partial P$ at~$\tv_i$: thus the internal angles
 of~$\tQ_i$ at the endpoints of its base are $\ttheta_i$ and
 $\ttheta_{i+1}$.

Let \[\tQ = \bigcup_{i=0}^{n-1} \tQ_i,\]
a closed collar neighborhood of~$\partial P$ in~$P$.

For each $h\in[0,\bh]$, let $\te_i(h)\subset\tQ_i$ be the segment
parallel to the base of~$\tQ_i$ at height~$h$, so that
$\te_i(0)=\te_i$ and $\te_i(\bh)=\te_i'$. Then, for each~$h$, the
union $\thor(h)$ of these segments is a polygonal simple closed curve:
these simple closed curves are the leaves of the {\em horizontal
  foliation}
\[\tHor = \left\{\thor(h)\,:\, h\in[0,\bh]\right\}\]
of~$\tQ$. The parameter~$h$ is called the {\em height} of the leaf
$\thor(h)$. 

Write
\[\tQ(h) = \bigcup_{h'\in[0,h]}\thor(h'),\] 
the subset of~$\tQ$ consisting of leaves with heights not
exceeding~$h$: therefore $\tQ(h)\subset \tQ = \tQ(\bh)$ is also a
closed collar neighborhood of~$\partial P$ for each~$h\in(0,\bh]$.

To construct the vertical foliation, let~$\varphi_i\co\te_i\to\te_i'$
be the orientation-preserving scaling from~$\te_i$ to
$\te_i'$. For each $\tx = \tgamma_0(t) \in \te_i$, denote by
$\tver(\tx)$ or $\tver(t)$ the straight line segment which joins $\tx$
to $\varphi_i(\tx)$. These segments are the leaves of the {\em
  vertical foliation}
\[\tVer = \left\{\tver(t)\,:\, t\in[0,L)\right\}\]
of $\tQ$. 

Define $\ttheta\co[0,L)\setminus\{t_0,\ldots,t_{n-1}\} \to(0,\pi)$ by
  setting $\ttheta(t)$ to be the angle between $\partial P$ and
  $\tver(t)$ at $\tgamma_0(t)$: that is, the angle between the
  oriented side of $\partial P$ containing $\tgamma_0(t)$ and the leaf
  $\tver(t)$ pointing into~$P$. This function has well-defined limits
  as~$t$ approaches each~$t_i$ from the left or the right:
  $\ttheta(t_i^-)=\pi-\ttheta_i$, and $\ttheta(t_i^+) =
  \ttheta_i$. The notation $\ttheta(\tx)=\ttheta(t)$ will also be used
  when $\tx = \tgamma_0(t)$.

The foliations $\tHor$ and $\tVer$ yield a parameterization
\[\tgamma\co [0,L) \times [0,\bh] \to \tQ\]
of $\tQ$, where $\tgamma(t,h)$ is the unique point of $\tver(t) \cap
\thor(h)$.

For each~$h\in(0,\bh]$, denote by $\tpsi_h\co \tQ(h)\to\partial P$ the
retraction of $\tQ(h)$ onto $\partial P$ which slides each point along
its vertical leaf:
\[
\tpsi_h(\gamma(t,h')) = \gamma(t,0) \qquad \text{(all $t\in[0,L)$ and
    $h'\in[0,h]$)}.
\]
In particular, $\tpsi_{\bh}$ is a retraction which squashes all
of~$\tQ$ onto~$\partial P$.

The projections to the paper surface~$S$ of the structures defined
above are denoted by removing tildes. Thus $Q = \pi(\tQ)$ is a closed
disk neighborhood of the scar~$G$, which is a dendrite by the
plainness hypothesis and Theorem~\ref{thm:plain-top-structure}; and
similarly $Q(h) = \pi(\tQ(h))$ is a closed subdisk neighborhood for
each $h\in(0,\bh]$.  $Q$~has horizontal and vertical foliations $\Hor
= \pi(\tHor)$ and $\Ver = \pi(\tVer)$. The leaves of~$\Hor$ are
projections of leaves of~$\tHor$, $\hor(h) = \pi(\thor(h))$: these are
topological circles except for~$\hor(0)=G$. The leaves of $\Ver$,
however, are unions of projections of leaves of~$\tVer$: for
each~$x\in G$, the leaf of~$\Ver$ containing~$x$ is defined to be
\[\ver(x) := \bigcup \left\{ \pi(\tver(\tx))\,:\,\tx\in\pi^{-1}(x)
\right\}. \]
Thus $\ver(x)$ is an arc if and only if $\#\pi^{-1}(x)\le 2$. If~$x$ is
a $k$-vertex for $k>2$ then $\ver(x)$ is a star with~$k$
branches.

The disks~$Q(h)$ for $0<h\le \bh$ are similarly foliated by horizontal
leaves $\hor(h')$ with $0< h'\le h$, and vertical leaves~$\ver_h(x)$,
which are the leaves $\ver(x)$ trimmed at their intersection with
$\hor(h)$.

The composition $\gamma := \pi\circ\tgamma\co [0,L)\times[0,\bh]\to Q$
  parameterizes~$Q$, although it is not injective on the preimage
  of~$G$. 

Because the retractions $\tpsi_h\co\tQ(h)\to\partial P$ fix $\partial
P$ pointwise, the compositions
\[\psi_h := \pi \circ \tpsi_h \circ \pi^{-1} \co Q(h)\to G\]
are well-defined retractions of $Q(h)$ onto $G$.

\subsubsection{The system of annuli $\AnnL\qrs$}
\label{subsec:cxstruct-annuli}
 In this section annuli $\AnnL\qrs$ about~$q$ will be constructed for
 each pair of $\ql$-planar radii $r<s$. The annuli will be defined as
 differences of two topological closed disks: $\AnnL\qrs =
 \Int(\DL\qs) \setminus \DL\qr$.

The parameter~$r$ in the disk $\DL\qr$ is related to the trimmed
vertical leaves which it contains: $\psi_{\bh}(\DL\qr)=\ol\CBqr$. A
second parameter~$h$ could be used to specify the horizontal leaves
which intersect~$\DL\qr$, but it is convenient to make this parameter
dependent on~$r$, via the function $h\co[0,\br]\to[0,\bh/2]$ defined
by
\[h(r) := \left(\frac{\bh}{2\br}\right)\,r.\]

\begin{defn}[$\DL\qr$]
\label{defn:Dqr}
Let~$q\in\Lambda$ and $r\in (0,\br)$. The subset $\DL\qr$ of~$Q$ is
defined by
\[\DL\qr := \psi_{h(r)}^{-1}\left(\ol\CBqr\right).\]
\end{defn}

Alternatively, $\DL\qr$ is the intersection of~$Q(h(r))$ with the
union of the vertical leaves $\ver(x)$ with $x\in\ol\CBqr$. 

\begin{rmk}
\label{rmk:D-intersection}
For all~$q\in\Lambda$, $\bigcap_{r\in(0,\br)}\DL\qr =
\bigcap_{r\in(0,\br)}\ol\CBqr=\{q\}$. For let~$\veps>0$, and
let~$(r_1,r_2)\sbs\PRq\cap(0,\veps)$, where $0<r_1<r_2<\veps$. Then
$\CB(q;(r_2-r_1)/2)$ does not contain any point~$x$
with~$d_G(q,x)>\veps$. ($\frac{r_2-r_1}{2}$-balls about points
of~$\Lambda$ further than~$r_2$ from~$q$ cannot intersect those about
points closer than~$r_1$ from~$q$.)
\end{rmk}

\begin{lem}
\label{lem:dqr-is-disk}
Let~$q\in \Lambda$ and $r\in(0,\br)$ be a $\ql$-planar radius. Write
  $n=\cnqr$ and $h=h(r)$. Then $\DL\qr$ is a topological closed
  disk, whose boundary is composed of~$n$ disjoint subarcs of the
  horizontal leaf $\hor(h)$, and the~$n$ trimmed vertical
  leaves~$\ver_h(x)$ with $x\in \CCqr$.
\end{lem}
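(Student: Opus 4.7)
The plan is to analyze $\DL\qr$ via its lift $\widetilde{D}_\Lambda(q;r) := \pi^{-1}(\DL\qr) \cap \tQ(h)$ in the polygon $P$, and then to deduce the disk structure from the dendritic structure of $\ol\CBqr$ together with the plainness of the paper-folding scheme.

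First, I would show that $\pi^{-1}(\ol\CBqr) \cap \partial P$ is a disjoint union of exactly $n$ closed arcs $\alpha_1, \ldots, \alpha_n \subset \partial P$. Since $r$ is a $\ql$-planar radius, each of the $n$ points of $\CCqr$ is planar, hence has exactly two preimages in $\partial P$, giving $2n$ preimages in total. These $2n$ points divide $\partial P$ into $2n$ open arcs, each of which embeds under $\pi$ into $G \setminus \CCqr$ and so maps into either $\Int \ol\CBqr$ or $G \setminus \ol\CBqr$. A local analysis at each preimage $\tx$ of a point $x \in \CCqr$ shows that the two sides of $\tx$ in $\partial P$ map to the two local branches of $G$ at the planar point $x$, one in $\ol\CBqr$ and the other in its complement; hence consecutive arcs alternate between the two cases, yielding exactly $n$ arcs of each type.

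Next, I would identify $\widetilde{D}_\Lambda(q;r)$ as the disjoint union of $n$ vertical slabs $\tT_i$, one over each $\alpha_i$, consisting of the trimmed vertical leaves $\tver_h(\tx)$ for $\tx \in \alpha_i$. Each $\tT_i$ is a concatenation of sub-trapezoids of the collar trapezoids $\tQ_j$, glued along vertical leaves above the polygon vertices lying in $\alpha_i$, and is therefore a closed topological disk in $\tQ(h)$ bounded by the base $\alpha_i$, two vertical sides $\tver_h(\tx_i^\pm)$ at the endpoints of $\alpha_i$, and a top subarc of $\thor(h)$. Since $\pi$ identifies points only on $\partial P$, the quotient $\DL\qr$ is obtained from $\bigsqcup_i \tT_i$ by gluing bases according to $\cP$ (which produces $\ol\CBqr$) and gluing pairs of vertical sides whose bottom endpoints in $\partial P$ are paired; the top arcs and the interiors of the slabs are not identified.

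To conclude that $\DL\qr$ is a closed topological disk I would combine two observations. On the one hand, the retraction $\psi_h\colon Q(h) \to G$ restricts to a deformation retract of $\DL\qr$ onto $\ol\CBqr$, so $\DL\qr$ is contractible. On the other hand, $\DL\qr$ is a closed subset of the topological $2$-sphere $S$ (Theorem~\ref{thm:plain-top-structure}), and a direct check shows that it is a compact $2$-manifold with boundary: half-disk neighborhoods exist at interior points of each claimed boundary arc, and quarter-disk neighborhoods still yield manifold boundary points at the $2n$ corners on $\hor(h)$. Since the only compact contractible $2$-manifold with boundary is the closed disk, $\DL\qr$ is a closed topological disk, whose boundary --- a single simple closed curve --- decomposes as the asserted $n$ horizontal subarcs of $\hor(h)$ and $n$ trimmed vertical leaves $\ver_h(x)$ for $x \in \CCqr$. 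The main obstacle, I expect, is verifying that the combinatorics of the gluings really produce a $2$-manifold rather than a surface with pinch points or handles; this relies essentially on the plainness of $\cP$ and on $\ol\CBqr$ being a dendrite with exactly $n$ boundary points, and a secondary subtlety is that at points of $\cV^s$ lying in the interior of $\DL\qr$ the manifold property must be inherited from the ambient manifold $S$ rather than established directly from the local combinatorics of the pairings.
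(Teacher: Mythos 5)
Your route is genuinely different from the paper's. The paper never leaves the quotient: it observes that $Q(h)$ is a closed disk, proves that $\partial_{Q(h)}\DL\qr=\psi_h^{-1}(\CCqr)=\bigcup_{x\in\CCqr}\ver_h(x)$, notes that each $\ver_h(x)$ is a cross cut of $Q(h)$ (because $x$ is planar), and that for each such cross cut the complementary component not meeting $G\setminus\ol\CBqr$ contains $q$ and all the other cross cuts; elementary topology of cross cuts in a disk then delivers the disk structure and the decomposition of the boundary into the $n$ vertical leaves and $n$ subarcs of $\hor(h)$ in one stroke. You instead lift to $P$, decompose $\pi^{-1}(\DL\qr)$ into $n$ foliated slabs over the arcs of $\pi^{-1}(\ol\CBqr)\cap\partial P$, and recover disk-ness from contractibility (retraction onto the dendrite $\ol\CBqr$) together with the classification of compact surfaces with boundary. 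This is workable, and the slab decomposition does exhibit the boundary decomposition explicitly; the price is two extra inputs the paper avoids (contractibility of dendrites, and the characterization of the disk among compact surfaces with boundary) plus a chart-by-chart manifold check. Note also that both arguments hinge on the same local fact, which you assert at roughly the same level of detail as the paper does: at each $x\in\CCqr$ one local branch of $G$ lies in $\CBqr$ and the other outside $\ol\CBqr$ (this is where planarity of $r$ and uniqueness of arcs in the dendrite $\ol\CBqr$ enter; if both branches could lie in $\CBqr$, the leaf $\ver_h(x)$ would be interior and your count of $n$ arcs in $\partial P$ would fail). Incidentally, the intermediate open arcs of $\partial P$ need not \emph{embed} in $G\setminus\CCqr$ --- they generally contain paired segments --- but you only use that their images avoid $\CCqr$, so this is harmless.

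Two points need repair or completion. First, your gluing description is wrong as written: $\pi$ is injective off $\partial P$, so the two lifted vertical sides $\tver_h(\tx)$ and $\tver_h(\tx')$ over the preimages of a point $x\in\CCqr$ are identified only at their feet (at $x$), not along their length; ``gluing pairs of vertical sides'' taken literally would yield a different space and would contradict both the lemma and your own closing sentence, in which $\ver_h(x)$ --- the two sides joined at the single point $x$ --- is a boundary arc. Second, the step you yourself flag as the main obstacle, namely that the quotient is a $2$-manifold with boundary exactly along the claimed arcs, is asserted rather than carried out. It is in fact routine once one notes (as you do) that every point of $\CBqr$, singular or not, is interior to $\DL\qr$ in $S$ and so inherits a chart from the ambient surface, while every claimed boundary point lies either off $G$ or over a planar point of $G$, where $Q$ is a trivially foliated product and $\DL\qr$ is locally a half- or quarter-disk cut out by one vertical and/or one horizontal leaf; but this verification, not the surface classification, is where the content lies, and the paper's cross-cut argument reaches the same conclusion with considerably less machinery.
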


\begin{figure}[htbp]
\lab{0}{x_0}{}
\lab{1}{x_1}{}
\lab{2}{x_2}{}
\lab{3}{x_3}{}
\lab{4}{x_4}{}
\lab{5}{q}{}
\lab{6}{\ver_h(x_1)}{}
\lab{7}{\CBqr}{}
\lab{8}{\DL\qr}{}
\begin{center}
\pichere{0.8}{disk-proof2}
\end{center}
\caption{The disk $\DL\qr$}
\label{fig:disk-proof}
\end{figure}

\begin{proof} 
(See Figure~\ref{fig:disk-proof}.) $n$ is finite by
  Lemma~\ref{lem:generalplanarradius}.
  Write $\CCqr=\{x_0,\ldots,x_{n-1}\}$. The first step is to show that
\[\partial_{Q(h)} \DL\qr = \psi_h^{-1}\left(\CCqr\right) = 
\bigcup_{i=0}^{n-1} \psi_h^{-1}(x_i) = \bigcup_{i=0}^{n-1}\ver_h(x_i).\]
  
 That $\partial_{Q(h)}\DL\qr \sbs \psi_h\I(\CCqr)$ is immediate from
 the definition of~$\DL\qr$ (recall that
 $\DL\qr=\psi_h^{-1}\left(\ol\CBqr)\right)$ and $\CCqr =
 \partial_G\CBqr$). The opposite inclusion follows from the fact that
 $r$ is $(q,\Lambda)$-planar, so that every neighborhood of each
 point of $\CCqr$ contains both points wich are closer to~$q$ and
 points which are further away (cf. the proof of
 Lemma~\ref{lem:generalplanarradius}).

Since the points~$x_i$ are planar, each $\psi_h^{-1}(x_i) =
\ver_h(x_i)$ is an arc which intersects~$\hor(h) = \partial_S Q(h)$
exactly at its endpoints: that is, a cross cut in~$Q(h)$. For
each~$i$, $Q(h)\setminus\ver_h(x_i)$ has exactly two components, one
of which intersects~$G$ in the complement of $\ol\CBqr$. Therefore
every $\ver_h(x_j)$ with $j\not=i$ is contained in the same component
as~$q$. It follows that $\partial_S\DL\qr$ is the simple closed curve
composed of the arcs $\ver_h(x_i)$ and the~$n$ subarcs of~$\hor(h)$
joining the endpoints of consecutive cross cuts in the cyclic order
around~$\hor(h)$.
\end{proof}

The annuli which will be used in the proof of Theorem~\ref{thm:generalmain}
can now be defined.

\begin{defn}[$\AnnL\qrs$]
Let $q\in \Lambda$, and $r,s\in\PRq$ with $r<s$. The
  subset $\AnnL\qrs$ of~$Q$ is defined by
\[\AnnL\qrs = \Int(\DL\qs) \setminus \DL\qr.\] 
\end{defn}

By Lemma~\ref{lem:dqr-is-disk}, and since $\DL\qr\subset\Int(\DL\qs)$,
$\AnnL\qrs$ is an open annular region with~$q$ in its bounded
complementary component (the complementary component not containing
$\partial Q$).

The following definition and theorem provide a lower bound on the
modules of these annuli.

\begin{defn}[Paper-folding goodness function]
\label{defn:general-pfgf}
Let~$(P,\cP)$ be a surface paper-folding scheme and~$\Lambda$ be a
clopen subset of~$\cV^s$. Define
$\iota_\Lambda\co\Lambda\times(0,\br)\to[0,\infty)$ by
\[
\iota_\Lambda\qr = 
\begin{cases}
\dfrac{M}{\cmqr + r\cdot\cnqr} & \text{ if }\cnqr < \infty, \\ \\
0 & \text{ otherwise,}
\end{cases}
\]
where $M=\frac15\min(\br/\bh, \bh/\br)$. $\iota_\Lambda$ is called a {\em
  paper-folding goodness function} for~$(P,\cP)$ on~$\Lambda$. 
\end{defn}

\begin{rmk}
By Lemma~\ref{lem:generalplanarradius}, the set of radii~$r\in(0,\br)$
at which $\iota_\Lambda\qr=0$ or $\iota_\Lambda\qr$ is discontinuous
is disjoint from~$\PRq$, and therefore has measure zero.
\end{rmk}

\begin{rmk}
\label{rmk:upper-bound}
Since~$\cmqr=\meas{\CBqr}\ge\meas{B_G\qr}\ge 2r$ for $r\in(0,\br)$,
$\iota\SL\qr$ is bounded above by $M/2r$.
\end{rmk}

\begin{thm}
\label{thm:module-bound}
Let~$q\in\Lambda$ and~$r,s\in\PRq$ with $r<s$. Then
\[\mod\AnnL\qrs \ge \int_r^s \iota_\Lambda(q;t)\rmd t.\]
\end{thm}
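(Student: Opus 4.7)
The plan is to combine additivity of modulus (Theorem~\ref{thm:additivity-module}) with the area-length inequality~(\ref{eq:modest}), reducing the bound to a Riemann sum for $\int_r^s\iota_\Lambda\,\rmd t$. I first pick a partition $r=t_0<t_1<\cdots<t_N=s$ with every $t_i\in\PRq$ and with consecutive pairs lying in a common connected component of $\PRq$ whenever possible; this is possible since $\PRq$ is a full-measure open subset of $(0,\br)$ (Lemma~\ref{lem:generalplanarradius}). By Lemma~\ref{lem:dqr-is-disk}, each $\DL(q;t_i)$ is a closed topological disk, so the sub-annuli $A_i:=\AnnL(q;t_i,t_{i+1})$ are pairwise nested and concentric inside $\AnnL\qrs$, and Theorem~\ref{thm:additivity-module} gives
\[\mod\AnnL\qrs \ge \sum_{i=0}^{N-1}\mod A_i.\]

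\textbf{Local estimate.} For each~$A_i$ I apply inequality~(\ref{eq:modest}) with the conformal metric $\nu$ on $A_i\sbs Q$ obtained by pushing forward the Euclidean metric on $\tQ\sbs P$ via~$\pi$ (well-defined off the scar, a set of two-dimensional measure zero). Two geometric bounds are needed. First, any curve in~$A_i$ from $\partial\DL(q;t_i)$ to $\partial\DL(q;t_{i+1})$ lifts to $\tQ$ and must either increase its height by at least $h(t_{i+1})-h(t_i)=(\bh/2\br)(t_{i+1}-t_i)$, or travel horizontally between vertical fibres over $\CC(q;t_i)$ and $\CC(q;t_{i+1})$ whose base projections in $G$ are separated by at least $t_{i+1}-t_i$ (Lemma~\ref{lem:ncc}~d)); using the angle bounds $\ttheta\in(0,\pi)$ built into the collar construction, its Euclidean length is bounded below by a constant times $\min(\bh/\br,1)(t_{i+1}-t_i)$. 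Second, decompose $A_i$ as a ``top slab'' at heights $[h(t_i),h(t_{i+1})]$ over $\ol\CB(q;t_i)$ plus ``side slabs'' at heights $[0,h(t_{i+1})]$ over $\CB(q;t_{i+1})\setminus\CB(q;t_i)$. Using the identity $\cm(q;t_{i+1})-\cm(q;t_i) = 2\cn(q;t_i)(t_{i+1}-t_i)$ (valid on a component of $\PRq$ by Lemma~\ref{lem:ncc}~d)), together with the trapezoidal scaling (base and top lengths differ by at most a factor of two), the total $\nu$-area of~$A_i$ is at most a constant times $\max(\bh/\br,1)[\cm(q;t_i)+t_i\cn(q;t_i)](t_{i+1}-t_i)$.

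\textbf{Conclusion.} Combining the two bounds via~(\ref{eq:modest}) yields
\[\mod A_i \ge M\cdot\frac{t_{i+1}-t_i}{\cm(q;t_i)+t_i\cn(q;t_i)}\]
with $M=\tfrac15\min(\bh/\br,\br/\bh)$ after careful tracking of the geometric constants (note that exactly one of $\bh/\br$ and $\br/\bh$ is at most~$1$, which is precisely the factor that appears in both the distance lower bound and the area upper bound). Summing over~$i$ and letting the mesh of the partition tend to zero, the right-hand side becomes the Riemann integral $\int_r^s\iota_\Lambda(q;t)\,\rmd t$ by continuity of $\cm(q;\cdot)$ on each complementary component of $\NCL$, local constancy of $\cn(q;\cdot)$ on $\PRq$ (Lemma~\ref{lem:generalplanarradius}), and the fact that $(0,\br)\setminus\PRq$ has measure zero. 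The chief technical obstacle lies in the local estimate: tracking the geometric constants through the trapezoidal foliation of the collar to obtain bounds on $\nu$-distance and $\nu$-area expressed purely in terms of~$\cm$ and~$\cn$, with constants uniform in $q$ and~$t$ so as to yield exactly the factor~$M$.
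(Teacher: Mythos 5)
Your proposal follows essentially the same route as the paper's own (sketch) proof: partition $[r,s]$ by $\ql$-planar radii, apply additivity of the module (Theorem~\ref{thm:additivity-module}) to the nested sub-annuli $\AnnL(q;t_i,t_{i+1})$, bound each one from below via the length--area inequality~(\ref{eq:modest}) using a lower bound $\tfrac12\min(\bh/\br,1)(t_{i+1}-t_i)$ on the separation of $\partial\DL(q;t_i)$ and $\partial\DL(q;t_{i+1})$ and an upper bound on the area increment coming from $\cm(q;t_{i+1})-\cm(q;t_i)=2\cn(q;t_i)(t_{i+1}-t_i)$, and pass to the limit using that $\PRq$ has full measure in $(0,\br)$. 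One correction to your constant-tracking: the area increment carries the factor $\bh/\br$ (the paper's estimate is $\frac{\rmd}{\rmd r}\Area_S(\DL\qr)\le\frac{5\bh}{4\br}\left(\cmqr+r\cdot\cnqr\right)$), not $\max(\bh/\br,1)$ as you wrote; with your stated factors the two bounds combine to $\min(\bh/\br,1)^2/\max(\bh/\br,1)$, which when $\bh<\br$ is smaller than $\min(\bh/\br,\br/\bh)$ by a factor $\bh/\br$ and so would not deliver the constant $M$ of Definition~\ref{defn:general-pfgf}, whereas the correct $\bh/\br$ factor yields exactly $M$.
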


\begin{proof}[Sketch proof]
The proof is analogous to that in~\cite{O1} for the case
where~$\Lambda$ consists of a single isolated singularity: a sketch of
the main ideas is given here.

Let~$(r_1,r_2)$ be contained in the full measure open subset~$\PRq$
of~$(0,\br)$. As~$r$ increases through~$(r_1,r_2)$, the
set~$\CBqr$ changes regularly: if $r_1<r<s<r_2$, then
$\CB\qs\setminus\CBqr$ is a union of~$n=\cnqr$ disjoint intervals of
length~$s-r$ which are disjoint from~$\ol\cV$.

It follows that the disk~$\DL\qr$ also changes regularly
for~$r\in(r_1,r_2)$, and using the foliations of~$Q$ it is possible to
obtain an upper bound for the rate of change of the Euclidean area of
$\DL\qr$, 
\[\frac{\rmd}{\rmd r} \Area_S(\DL\qr) \le \frac{5\bh}{4\br}\left(
\cmqr+r\cdot\cnqr\right) .\] 

Similarly, the distance between the boundary components of $\AnnL\qrs$
can be estimated for $r_1<r<s<r_2$:
\[d_S\left(\partial\DL\qr, \partial\DL\qs\right) \ge
C(s-r),\]
where $C=\frac{1}{2}\min(\bh/\br,1)$.

Now if $r_1=s_0<s_1<\cdots<s_k=r_2$ is a partition of~$[r_1,r_2]$, it
follows from Theorem~\ref{thm:additivity-module} and~(\ref{eq:modest}) that
\begin{eqnarray*}
\mod\AnnL(q;r_1,r_2) &\ge&
\sum_{j=1}^k \mod\AnnL(q;s_{j-1},s_j) \\
&\ge& \sum_{j=1}^k \frac{C^2\,(s_j-s_{j-1})^2}
{\Area_S(\DL(q;s_j)) - \Area_S(\DL(q;s_{j-1}))}.
\end{eqnarray*}
Taking a limit over increasingly fine partitions gives
\[\mod\AnnL(q;r_1,r_2) \ge \int_{r_1}^{r_2} 
\frac{C^2\rmd t}{\frac{\rmd}{\rmd t}\Area_S(\DL(q;t))} \ge
\int_{r_1}^{r_2}\iota_\Lambda(q;t)\rmd t.\] 

That the same bound holds for arbitrary~$r_1,r_2\in\PRq$ follows from
additivity of the module and the fact that~$\PRq$ is a full measure
open subset of~$(0,\br)$.
\end{proof}

\subsubsection{Proof of Theorem~\ref{thm:generalmain}}
 By Lemma~\ref{lem:ncc}~b), $\NCL=\{r_k\,:\, k\ge 1\}$ where~$(r_k)$
 is a decreasing sequence converging to~$0$ if~$\Lambda$ is infinite,
 and is finite if $\Lambda$ is finite: in the finite case, extend
 $\NCL$ arbitrarily to a set of the form $\{r_k\,:\, k\ge 1\}$,
 where~$(r_k)$ is a decreasing sequence in~$(0,\br)$ converging
 to~$0$.

Let~$k\ge1$. By Remark~\ref{rmk:CLambda} and Lemma~\ref{lem:ncc}~c),
there is an equivalence relation~$\sim_k$ on~$\Lambda$ whose
classes~$\erk{q}$ are equal to $\CLqr$ for all~$r\in(r_{k+1}, r_k)$:
there are~$n_k=\ncc(r_k)$ such classes. The
set~$\PRq\cap(r_{k+1},r_k)$ of $\ql$-planar radii in~$(r_{k+1},r_k)$
is a full measure open subset of~$(r_{k+1},r_k)$ which is well defined
on equivalence classes, by Remark~\ref{rmk:planar-radius} and
Lemma~\ref{lem:generalplanarradius}.

The idea of the proof is to obtain lower bounds on the modules of the
annuli $\AnnL(q;r_{k+1}, r_k)$, but since $r_{k+1}$ and $r_k$ are not
$(q,\Lambda)$-planar radii, it is necessary first to increase
$r_{k+1}$ and decrease $r_k$ by a controlled
amount~$\veps_k(\erk{q})>0$. This number is chosen so that both
$r_{k+1}+\veps_k(\erk{q})$ and $r_k-\veps_k(\erk{q})$ belong
to~$\PRq$, and small enough that
\[\veps_k(\erk{q}) \le \min
\left(
\frac{r_{k+1}}{2^{k-1} M},\, \frac{r_k-r_{k+1}}{3}
\right).
\]

 Define an annulus $\Ann_k(\erk{q})$ for each
 $\erk{q}\in\Lambda/\!\sim_k$ by
\[\Ann_k(\erk{q}) = \AnnL(q; r_{k+1}+\veps_k(\erk{q}),
r_k-\veps_k(\erk{q})) \]
(see Figure~\ref{fig:nested}).

\begin{figure}[htbp]
\lab{1}{q_1}{}
\lab{2}{q_2}{}
\lab{3}{q_3}{}
\lab{4}{q_4}{}
\lab{5}{q_5}{}
\lab{6}{q_6}{}
\lab{7}{\Ann_1([q_i]_1), i=1,2,3}{}
\lab{8}{\Ann_2([q_i]_2), i=4,5}{}
\lab{9}{\Ann_2([q_6]_2)}{}
\begin{center}
\pichere{0.65}{nested2}
\end{center}
\caption{The annuli~$\Ann_k(\erk{q})$}
\label{fig:nested}
\end{figure}

It follows from Theorem~\ref{thm:module-bound} and
Remark~\ref{rmk:upper-bound} that
\begin{eqnarray*}
\mod\Ann_k(\erk{q}) &\ge&
\int_{r_{k+1}+\veps_k(\erk{q})}^{r_k-\veps_k(\erk{q})}
\iota_\Lambda(q;t)\,\rmd t\\
&\ge& \int_{r_{k+1}}^{r_k} \iota_\Lambda(q;t)\,\rmd t -
\frac{2\veps_k(\erk{q})M}{2r_{k+1}} \\
&\ge& \int_{r_{k+1}}^{r_k} \iota_\Lambda(q;t)\,\rmd t - \frac{1}{2^{k-1}}.
\end{eqnarray*}
In particular, if~(\ref{eq:generaldivint1}) holds then $\sum_{k\ge 1}
\mod\Ann_k(\erk{q})$ diverges for all~$q\in\Lambda$. 

Observe the following properties of these annuli:
\begin{enumerate}[a)]
\item The~$n_k$ annuli~$\Ann_k(\erk{q})$ are mutually disjoint, since
  \[\Ann_k(\erk{q})\sbs \DL(q; r_k-\veps(\erk{q})) \sbs
  \psi\I_{h(r_k)}\left(\CB(q;r_k) \right),\]
and $\CB(q_1;r_k)\cap\CB(q_2;r_k)=\emptyset$ unless $q_1\sim_k q_2$.

\item All of the points of~$\erk{q}$ are contained in the bounded
  complementary component $\DL(q;r_{k+1}+\veps(\erk{q}))$
  of~$\Ann_k(\erk{q})$, and all of the points of
  $\Lambda\setminus\erk{q}$ are contained in the unbounded
  complementary component.
\item $\Ann_{k+1}(\er{q}{k+1})$ is nested into $\Ann_k(\erk{q})$ for
  each~$q\in\Lambda$ and $k\ge 1$. 
\item Let~$(q_k)$ be any sequence in~$\Lambda$ with the property that
  $\Ann_k(\erk{q_k})$ is a nested sequence of annuli. Then there is a
  unique~$q\in\Lambda$ such that $\Ann_k(\erk{q_k})=\Ann_k(\erk{q})$
  for all~$k$. For the sequence of bounded complementary components
  contains at least one point~$q\in S$ in its intersection. This point
  must lie in~$G$ since the sequence of inner heights of the annuli
  converges to zero; and it must lie in~$\Lambda$ since~$\Lambda$ is a
  closed subset of~$G$ and the sequence of inner radii of the annuli
  converges to zero. Then since~$q\in\Lambda$ lies in the bounded
  complementary component of every $\Ann_k(\erk{q_k})$, it follows
  from~b) above that $q\sim_k q_k$ for all~$k$.
\end{enumerate}

The proof of Theorem~\ref{thm:generalmain} can now be completed.
Let~$q\in\Lambda$. Since~$\Lambda$ is clopen in~$\cV^s$, there is
some~$r\in\PR(q)$ with $\DL\qr\cap\cV^s\sbs\Lambda$, and it suffices
to show that the conformal structure on~$S\setminus\cV^s$ extends
uniquely over $\DL\qr$.

Let~$W=\DL\qr\setminus\Lambda$. Then~$W$ is a planar Riemann surface,
and so by Koebe's General Uniformization Theorem there is a
uniformizing isomorphism $\phi\co W\to\phi(W)$ of~$W$ onto a
domain~$\phi(W)\sbs\C$.

Let~$K$ be such that~$r_K<r$. Pick any~$q'\in \DL\qr\cap\Lambda$, and
write $A_k(q')=\phi(\Ann_k(\er{q'}{k}))$ for each~$k\ge K$. Then
$\sum_{k\ge K} \mod A_k(q')$ diverges by~(\ref{eq:generaldivint1}) and
the argument above, so that the intersection of the bounded
complementary components of the~$A_k(q')$ is a single
point~$z(q')$. Defining $\phi(q')=z(q')$ for each~$q'\in
\DL\qr\cap\Lambda$ extends~$\phi$ to a homeomorphism from~$\DL\qr$
into~$\C$. 

For~$k\ge K$, define $U_k=\bigcup_{q'\in
  \DL\qr\cap\Lambda}A_k(q')$. Then the sets~$U_k$ satisfy the
hypotheses of Theorem~\ref{thm:absareazero} by properties~a) to~d)
above, so that~$\phi(\Lambda)$ is a totally disconnected set of
absolute area zero. Hence if~$\psi\co W\to\psi(W)$ is a second choice
of uniformizing isomorphism on~$W$, then $\psi\circ\phi^{-1}$ extends
uniquely across~$\phi(\Lambda)$ to a conformal map on $\phi(\DL\qr)$ by
Theorem~\ref{thm:removability1}. There is therefore a uniquely defined
conformal structure in $\DL\qr$ which agrees with the given conformal
structure on $\DL\qr\setminus\Lambda$. 
\qed

\section{Modulus of continuity}
\label{sec:modulus-continuity}

Suppose that $(P,\cP)$ is a plain paper-folding scheme with associated
paper surface~$S$, which is a topological sphere by
Theorem~\ref{thm:plain-top-structure}; and that
condition~(\ref{eq:generaldivint1}) holds at every point~$q$
of~$\Lambda=\cV^s$, so that~$S$ has a unique conformal structure which
agrees with the natural conformal structure on $S\setminus\cV^s$. It
follows that~$S$ is conformally isomorphic to the Riemann
sphere~$\csph$. The aim in this section is to construct an explicit
modulus of continuity for a suitably normalized uniformizing map $u\co
S\to\csph$. This modulus of continuity depends only on the geometry
of~$P$ (specifically, on~$|\partial P|$ and on the collaring
height~$\bh$), and on the metric on~$G$, as expressed by
the injectivity radius~$\br$ and the paper-folding goodness function
$\iota\SL$ (and an alternative version of this function for
non-singular points which is described in
Section~\ref{sec:preliminaries}).

The motivation for doing this is that when there is a
uniform modulus of continuity across a family of such paper-folding
schemes, it is possible to use Arzel\`a-Ascoli type arguments to
construct a limiting complex sphere.

After some preliminaries in Section~\ref{sec:preliminaries}, a local
modulus of continuity for~$u$ is obtained at each point of~$S$ (with respect
to the metric~$d_S$ on~$S$ and the spherical metric $d_{\csph}$
on~$\csph$) in Section~\ref{sec:modul-cont-at-point}. In
Section~\ref{sec:glob-modul-cont} it is shown that these local moduli
of continuity patch together to give a global modulus of continuity
for $\phi:=u\circ\pi\co P\to\csph$.

\begin{rmk}
Because the local modulus of continuity derived in
Section~\ref{sec:modul-cont-at-point} arises from a {\em local}
calculation, it applies equally to a uniformizing map from a disk
neighborhood of a point of~$S$ to the unit disk in~$\C$ for an
arbitrary Riemann paper surface. 
\end{rmk}

\subsection{Preliminaries}
\label{sec:preliminaries}

The following assumptions are made throughout Section~\ref{sec:modulus-continuity}:
\begin{itemize}
\item $(P,\cP)$ is a plain paper-folding scheme with associated paper
  sphere~$S$ and scar~$G$.  
\item  $\bh>0$ denotes a fixed choice of collaring height for~$P$
  (Section~\ref{subsec:cxstruct-foliatedcollar}). 
\item $\Lambda=\cV^s$, the singular set of~$G$, and
  condition~(\ref{eq:generaldivint1}) holds at every
  point~$q\in\Lambda$. 
\item $\br>0$ denotes a fixed choice of
  injectivity radius for~$\Lambda$ (Definition~\ref{defn:inj-radius}).
\end{itemize}

In particular, $S$ is conformally isomorphic to the Riemann
sphere~$\csph$. A fixed choice of uniformizing map $u\co S\to\csph$ is
made as follows. Pick points $\tp_0\in\partial P$ and $\tp_\infty\in
P_{\bh}$ (recall that~$P_{\bh}$ is the complement in~$P$ of the
height~$\bh$ collaring~$\tQ(\bh)$ of~$P$). Let~$p_0:=\pi(\tp_0)\in G$
and $p_\infty:=\pi(\tp_\infty)\in S\setminus Q(\bh)$, and define $u\co
S\to\csph$ to be the isomorphism with the following normalization:
\begin{itemize}
\item $u(p_0)=0$;
\item $u(p_\infty)=\infty$; and
\item the reciprocal~$\Phi:=1/\phi$ of the composition
  $\phi:=u\circ\pi\co P\to\csph$ satisfies $\Phi'(\tp_\infty)=1$.
\end{itemize}

\begin{defn}[Modulus of continuity]
\label{defn:mod-cont}
Let~$\rho\co[0,\veps)\to[0,\infty)$, for some~$\veps>0$, be a
    continuous and strictly increasing function with~$\rho(0)=0$. A
    function $f\co(X,d_X)\to(Y,d_Y)$ between metric spaces has {\em
      modulus of continuity}~$\rho$ at $x_0\in X$ if, for every~$x\in
    X$ with $d_X(x_0,x)<\veps$,
\[d_Y(f(x_0),f(x)) \le \rho(d_X(x_0,x)).\]
$f$ is said to have {\em global modulus of continuity~$\rho$} if this
inequality holds for every pair of points $x_0,x\in X$ with
$d_X(x_0,x)<\veps$. 
\end{defn}

\begin{rmk}
\label{rmk:mod-cont-transfer}
Since the projection~$\pi\co P\to S$ is distance non-increasing, a
modulus of continuity $\rho_q$ for $u\co S\to\csph$ at~$q\in S$ is
also a modulus of continuity for the composition $\phi:=u\circ\pi \co
P\to\csph$ at the points of~$\pi^{-1}(q)$.
\end{rmk}

\medskip\medskip

In Section~\ref{sec:conf-struct-paper}, disks~$\DL\qr$,
annuli~$\AnnL\qrs$, and paper-folding goodness functions~$\iota\SL\qr$
providing a lower bound on the modules of the annuli were constructed
depending on the set~$\Lambda$. The only properties of the
set~$\Lambda$ which were used in the constructions were that it is a
closed subset of~$G$ with zero measure. In particular, the
constructions can be repeated with~$\Lambda=\{q\}$, where~$q$ is an
arbitrary {\em non-singular} point of~$G$. The objects thus
constructed are denoted by dropping the subscript~$\Lambda$:
thus~$D\qr$ is a disk containing~$q$ for every $q$-planar
radius~$r\in(0,\br)$; $\Ann\qrs$ is an annular region containing $q$ in its
bounded complementary component for every pair of $q$-planar radii
$0<r<s<\br$; and
\begin{equation}
\label{eq:standard-mod-lower-bound}
\mod \Ann\qrs \ge \int_r^s\iota\qt\,\rmd t,
\end{equation}
where 
\[
\iota\qr = 
\begin{cases}
\dfrac{M}{m\qr + r\cdot n\qr} & \text{ if }n\qr < \infty, \\ \\
0 & \text{ otherwise.}
\end{cases}
\]
Here $m(q;r) = \meas{B_G(q;r)}=\meas{\operatorname{CB}_{\{q\}}\qr}$, and
$n(q;r)=\operatorname{cn}_{\{q\}}(q;r)$ is the number of points of~$G$ at
distance exactly~$r$ from~$q$.

Notice that
\begin{equation}
\label{eq:regular-integral-diverges}
\int_0 \iota(q;s)\,\rmd s = \infty
\end{equation}
at any non-singular point~$q$: for if~$q$ is an isolated valence~$k$
vertex or a planar point ($k=2$), then $m(q;s)=2ks$ and $n(q;s)=k$ for
all sufficiently small~$s$.

\medskip\medskip

The modulus of continuity for~$u$ at points~$q$ in a particular
neighbourhood~$Q(\veps)$ of~$G$ will be obtained by constucting
annular regions~$A$ in~$Q(\bh)$ for which both~$q$ and a nearby
point~$x$ at prescribed distance from~$q$ lie in the same
complementary component. An upper bound on $d_\csph(u(q),u(x))$ can
then be found by combining a lower bound for $\mod A$ (arising, for
example, from Theorem~\ref{thm:module-bound}) with the upper bound
for $\mod u(A)$ provided by Theorem~\ref{thm:GAT}. There are two
issues which need to be addressed in doing this: 
\begin{enumerate}[a)]
\item To find a radius~$R$ with the property that~$u(A)$ is contained
  in the disk~$|z|\le R$, so that Theorem~\ref{thm:GAT} can be
  applied; and
\item To relate the distance~$d_S$ in~$S$ with the parameter~$r$ used
  in the construction of the disks $\DL\qr$ and $D\qr$.
\end{enumerate}

The first of these issues is dealt with by the following lemma, which
is a simple consequence of Koebe's one-quarter theorem and the
normalization of~$u$.
\begin{lem}
\label{lem:Koebe}
There is a constant~$R>0$, depending only on~$\bh$, such that
$u(Q(\bh/2))$ is contained in the disk $|z|<R$ in~$\C$.
\end{lem}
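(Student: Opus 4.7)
The plan is to apply Koebe's one-quarter theorem to the univalent map $\Phi\co\Int(P)\to\C$, exploiting the normalization $\Phi(\tp_\infty)=0$, $\Phi'(\tp_\infty)=1$.

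The preliminary geometric step is to verify that $\tp_\infty\in P_{\bh}$ implies $d_\C(\tp_\infty,\partial P)\ge\bh$. This reduces to showing that every $z\in P$ with $d_\C(z,\partial P)\le\bh$ lies in the trapezoidal collar~$\tQ(\bh)$ of Section~\ref{subsec:cxstruct-foliatedcollar}: if~$\te_j$ is the edge of~$P$ nearest to~$z$, then~$z$ must lie on the $\te_j$-side of the angle bisectors at both endpoints of~$\te_j$ (otherwise an adjacent edge would be strictly closer), so $z$ sits inside~$\tQ_j$ at height equal to its perpendicular distance to~$\te_j$. Writing $D$ for the open Euclidean disk of radius~$\bh/2$ centred at~$\tp_\infty$, this shows that $D\sbs\Int(P)$ and $D\cap\tQ(\bh/2)=\emptyset$ (since every point of~$\tQ(\bh/2)$ sits at Euclidean distance at most~$\bh/2$ from~$\partial P$, by the same argument).

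I would then apply Koebe's one-quarter theorem to the rescaled univalent map $w\mapsto 2\Phi(\tp_\infty+(\bh/2)w)/\bh$ on the unit disk~$\D$, which has the standard Koebe normalisation at the origin. Its image contains $\{|z|<1/4\}$, so $\Phi(D)\sps\{|z|<\bh/8\}$. Since $\Phi$ is univalent on the larger domain~$\Int(P)$, the preimage of $\{|z|<\bh/8\}$ under~$\Phi$ must lie inside~$D$; consequently $|\Phi(\tq)|\ge\bh/8$ for every $\tq\in\tQ(\bh/2)\cap\Int(P)$, and by continuity of~$\Phi$ on~$P$ for every $\tq\in\tQ(\bh/2)$. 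This is equivalent to $|\phi(\tq)|\le 8/\bh$ on~$\tQ(\bh/2)$, and since $Q(\bh/2)=\pi(\tQ(\bh/2))$ and $\phi=u\circ\pi$, one obtains $|u|\le 8/\bh$ on~$Q(\bh/2)$, so the lemma holds with $R=8/\bh+1$. The main obstacle is the initial geometric identification of the trapezoidal collar with the Euclidean $\bh$-neighbourhood of~$\partial P$, for which the conditions imposed on~$\bh$ in Section~\ref{subsec:cxstruct-foliatedcollar} play a crucial role; once this is in hand, the rest is a direct application of univalence combined with Koebe's theorem.
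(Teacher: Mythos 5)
Your analytic core --- rescale $\Phi$ on the Euclidean disk of radius $\bh/2$ about $\tp_\infty$, apply Koebe's one-quarter theorem, and then use univalence of $\Phi$ on all of $\Int(P)$ to get $|\Phi|\ge\bh/8$, hence $|u|\le 8/\bh$, on $Q(\bh/2)$ --- is exactly the ``simple consequence of Koebe's one-quarter theorem and the normalization'' that the paper intends, and that part is fine. The gap is in the geometric input. First, your justification that every $z\in P$ with $d_\C(z,\partial P)\le\bh$ lies in $\tQ(\bh)$ (``if $\te_j$ is the nearest edge then $z$ lies on the $\te_j$-side of the bisectors at both endpoints, otherwise an adjacent edge would be strictly closer'') is not valid for general polygons: $P$ need not be convex, so the nearest point of $\partial P$ may be a reflex vertex, where the two adjacent edges are equidistant from $z$ and $z$ can lie on the wrong side of the bisector for the edge you happened to choose; and when an adjacent edge is short, the perpendicular foot on its line can fall beyond the segment, so lying beyond the bisector does not make that edge strictly closer. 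Second, and more seriously, the parenthetical claim that every point of $\tQ(\bh/2)$ is at Euclidean distance at most $\bh/2$ from $\partial P$ is false: at a reflex vertex $\tv_i$ the common vertical side of $\tQ_{i-1}$ and $\tQ_i$ lies along the bisector and has length $\bh/\sin\ttheta_i$, and a point of it at height $\bh/2$ has $d_\C$-distance $(\bh/2)/\sin\ttheta_i$ to $\partial P$ (its nearest boundary point being the vertex), which exceeds $\bh/2$, and exceeds $\bh$ once the interior angle is large enough. Repairing this with the obvious angle-dependent bound would make your $R$ depend on the angles of $P$ and not only on $\bh$, which is precisely what the lemma forbids.

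Both facts you actually need --- $d_\C(\tp_\infty,\partial P)\ge\bh$ and the disjointness of the radius-$\bh/2$ disk about $\tp_\infty$ from $\tQ(\bh/2)$ --- are true, but they are more robustly obtained from the collar structure itself than from a Voronoi-type analysis of nearest edges. The height coordinate of the horizontal foliation is well defined on $\tQ(\bh)$ (on a shared vertical side the distances to the two edge-lines agree, because that side lies on the angle bisector) and is $1$-Lipschitz along any path in $\tQ(\bh)$, since in each trapezoid it is the distance to a fixed line. Now take the straight segment from $\tp_\infty$ to the nearest point of $\partial P$: it stays in $P$, it must cross the inner boundary $\bigcup_i\te_i'$ of the collar, and from its last such crossing onwards it stays in $\tQ(\bh)$, so its length is at least the drop in height, namely $\bh$; this gives $d_\C(\tp_\infty,\partial P)\ge\bh$. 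Similarly, a segment from $\tp_\infty$ to a point of $\tQ(\bh/2)$ either leaves $P$ (length at least $\bh$) or crosses some $\te_i'$ and then stays in the collar, so its length is at least $\bh-\bh/2=\bh/2$. With these two estimates in place of your claims, the rest of your argument goes through verbatim and yields $R$ depending only on $\bh$.
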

\qed

For the second issue, the following technical lemma, which is proved
in~\cite{O1}, is crucial. The distance of a point~$q\in Q(\bh)$
from~$G$ and from~$\Lambda$ is described by two numbers~$h_q$ and
$d_q$:

\begin{defns}[$\psi$, $h_q$, $d_q$]
\label{defn:h_q and d_q, and psi}
Let~$\psi:=\psi_{\bh}$ denote the retraction of~$Q(\bh)$
onto~$G$. Given~$q\in Q(\bh)$, let~$h_q\ge 0$ denote the {\em height}
of~$q$ (that is, $q$ lies in the horizontal leaf $\hor(h_q)$); and
let~$d_q:=d_G(\psi(q),\Lambda)\ge 0$.
\end{defns}

Notice that both~$h_q$ and~$d_q$ depend continuously on~$q\in
Q(\bh)$. 

\begin{lem}
\label{lem:technical}
Let
\begin{equation}
\label{eq:delta}
\delta = \delta(\br,\bh,|\partial P|) := \frac{1}{4}\min\left\{
\br,\,\bh,\,\frac{2\br\bh}{|\partial P|}
\right\}>0,
\end{equation}
and define $\mu\co Q(\delta)\times[0,\delta)\to[0,\br)$ by
\begin{equation}
\label{eq:mu}
\mu(q;t):=\frac{\br}{2\delta}(t+h_q).
\end{equation}
Then, for every $q\in Q(\delta)$ and every $r\in[0,\delta)$,
\begin{eqnarray*}
\ol{B}_S\qr &\sbs& \DL(\psi(q); \mu(q;r)) \qquad \text{if
  $\psi(q)\in\Lambda$,} \\
\ol{B}_S\qr &\sbs& D(\psi(q); \mu(q;r)) \qquad \text{ if
  $\psi(q)\not\in\Lambda$.} 
\end{eqnarray*}
\end{lem}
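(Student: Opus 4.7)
The plan is to take a near-geodesic path $\gamma$ in $S$ from $q$ to an arbitrary $x\in\ol{B}_S(q;r)$ and to control both the height function and the retraction $\psi$ along $\gamma$ using the Euclidean geometry of the foliated collar $\tQ(\bh)$. I would verify two bounds separately: the height change $h_x\le h(\mu(q;r))=\frac{\bh}{4\delta}(r+h_q)$, and the $d_G$-displacement $d_G(\psi(q),\psi(x))\le\frac{\br}{2\delta}(r+h_q)$, together with the fact that $\psi(x)$ lies in the component of $\ol{B}_G(\Lambda;\mu(q;r))$ containing $\psi(q)$ (in the non-singular case, inside the ball $\ol{B}_G(\psi(q);\mu(q;r))$ itself).

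First, I would argue that $\gamma$ stays inside $Q(\bh/2)$. Since $q\in Q(\delta)$ forces $h_q\le\delta$ and $r<\delta$, we have $r+h_q\le 2\delta\le\bh/2$ by~(\ref{eq:delta}); combined with the observation that moving a unit distance in $S$ while remaining in $Q(\bh)$ changes the height by a bounded factor (comparable to $1$ away from vertices and controlled by the choice of $\bh$ near them), this confines $\gamma$ to $Q(\bh/2)$, so that $\Hor$, $\Ver$, and the retraction $\psi$ are defined everywhere along $\gamma$.

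Second, I would lift $\gamma$ piecewise into $\tQ(\bh)$, cutting whenever it meets $\partial P$, and analyse the Euclidean trace of each piece inside the trapezoids $\tQ_i$. Inside a given $\tQ_i$, moving by Euclidean distance $d$ changes the height by at most $d$ and changes the arc-length parameter $t$ on $\partial P$ by at most a constant multiple of $d$, where the constant is determined by the trapezoid angles $\ttheta_i$ and the scaling ratios $|\te_i|/|\te_i'|\in[\tfrac12,2]$ given by condition~a) of Section~\ref{subsec:cxstruct-foliatedcollar}. Summing contributions along the path and applying Lemma~\ref{lem:metricstructureofG} to convert arc-length on $\partial P$ into $d_G$-distance yields the required estimates on $h_x$ and on $d_G(\psi(q),\psi(x))$. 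The claim that $\psi(x)$ lies in the correct component then follows because $\psi\circ\gamma$ is a continuous path in $\ol{B}_G(\Lambda;\mu(q;r))$ joining $\psi(q)$ to $\psi(x)$.

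The hard part will be the behavior near the vertices of~$P$, where a trapezoid can be close to degenerate and the angle $\ttheta(t)$ small, threatening to blow up the conversion factor between Euclidean step length in $\tQ_i$ and change in the $t$-parameter on $\partial P$. The precise form of $\delta$ in~(\ref{eq:delta})---in particular the $\frac{2\br\bh}{|\partial P|}$ term---is what absorbs this distortion: it forces $r+h_q$ to be small compared to the minimal trapezoid base, so that $\gamma$ can cross only boundedly many vertical sides of trapezoids, and its cumulative effect on the $t$-parameter remains linear in $r+h_q$ with the constant $\frac{\br}{2\delta}$ appearing in $\mu(q;r)$.
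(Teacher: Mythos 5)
The paper itself does not prove this lemma: it is quoted from~\cite{O1} (with the remark that ``some result of this form must hold'' and that its whole content is the specific constants), so the comparison is with the argument such a proof must contain rather than with a written one. Your skeleton is the right one and surely matches it: take a geodesic in $S$ (which exists since $d_S$ is strictly intrinsic), note that the height is $1$-Lipschitz along paths so the path stays in $Q(\bh/2)$ and $h_x\le h_q+r\le\frac{\bh}{4\delta}(r+h_q)$, bound the drift of the base point under $\psi$ through the trapezoid geometry, convert to $d_G$ (the quotient only shrinks arc length), and deduce the component statement. Two small corrections there: the height is \emph{exactly} $1$-Lipschitz, also at vertices, since it is the distance to the line through the base of the ambient trapezoid and these agree along the bisecting vertical sides, so no special care with $\bh$ is needed for that part; and the component claim is better deduced from the inequality $d_G(\psi(q),\psi(x))\le\mu(q;r)$ via the geodesic arc in $G$ (whose interior then lies in the open ball $B_G(\Lambda;\mu(q;r))$), since a path that merely stays in the closed ball could touch its boundary and re-enter a different component of the open ball.

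The genuine gap is in the step you yourself flag as the hard part: the mechanism you offer for why $\frac{\br}{2\delta}$ suffices is not the right one. The number of vertical trapezoid sides crossed is irrelevant (crossing them is harmless, as $\psi$ and the height are continuous there), and $\delta\le\frac{\br\bh}{2|\partial P|}$ involves the \emph{total} perimeter, not the shortest side, so it does not make $r+h_q$ small compared with the minimal trapezoid base. The distortion that must be controlled is the slant of the vertical leaves near a sharp vertex: inside $\tQ_i$ a Euclidean displacement with vertical component $\rmd h$ moves the base parameter by roughly $|\cot\ttheta_i|\,\rmd h$ (up to the bounded factor coming from $|\te_i'|/|\te_i|\in[\tfrac12,2]$), and $|\cot\ttheta_i|$ is not a priori bounded in terms of $\br$, $\bh$, $|\partial P|$. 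What rescues the stated constant is that the collaring conditions of Section~\ref{subsec:cxstruct-foliatedcollar} (tops between half and twice the bases, trapezoids meeting only along their vertical sides) force $\bh\,|\cot\ttheta_i|$ to be at most of the order of the adjacent side lengths, hence $|\cot\ttheta_i|\lesssim|\partial P|/\bh\le\frac{\br}{2\delta}$ --- precisely the coefficient in $\mu(q;r)$. Until this pointwise bound is identified and integrated along the lifted pieces of the path (tracking the factors of $2$), the assertion that the cumulative drift ``remains linear in $r+h_q$ with the constant $\frac{\br}{2\delta}$'' is exactly what has to be proved; and since the content of the lemma is nothing but these constants, that is where the work lies.
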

\qed

See Figure~\ref{fig:ballindisk}: note that it is clear that some
result of this form must hold, and the content of
Lemma~\ref{lem:technical} is the specific constants obtained.

\begin{figure}[htbp]
\lab{q}{q}{}\lab{pq}{\psi(q)}{}
\lab{B}{\ol{B}_S\qr}{b}
\lab{D}{D\left(\psi(q);\,\mu(q;r)\right)}{t}
\begin{center}
\pichere{0.5}{ballindisk}
\end{center}
\caption{The disk $D\left(\psi(q);\mu(q;r)\right)$ and the ball
$\ol{B}_S\qr$.}
\label{fig:ballindisk}
\end{figure}

In the next section, an explicit modulus of continuity $\rho_q\co
[0,\delta/2)\to[0,\infty)$ will be given for each~$q\in
    Q(\delta/2)$. For points~$q$ far from~$G$, a modulus of continuity
    is easily obtained by applying Theorem~\ref{thm:Koebedistn} to the
    reciprocal of~$\phi\co\Int(P)\to\csph$:

\begin{lem}
\label{lem:interior-mod-cont}
Let~$\tq,\tx\in P_{\delta/3} = P\setminus \tQ(\delta/3)$. Then
\[d_\csph(\phi(\tq),\phi(\tx))\le \kappa d_P(\tq,\tx),\]
where
\[\kappa = 2\exp\left(
\frac{24\diam_{P_{\delta/3}} P_{\delta/3}}
{\delta}
\right)
\le 2\exp\left(
\frac{48|\partial P|}{\delta}
\right).
\]
That is, $\phi$ is Lipschitz in~$P_{\delta/3}$ with a constant which
depends only on~$\br$, $\bh$, and $|\partial P|$.
\end{lem}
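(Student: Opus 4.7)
The plan is to apply Theorem~\ref{thm:Koebedistn} to the reciprocal $\Phi:=1/\phi\co\Int(P)\to\C$, exploiting the normalization that has already been imposed on~$u$. First, I would verify that $\Phi$ is holomorphic and univalent on $\Int(P)$: since $\phi^{-1}(0)=\pi^{-1}(p_0)\sbs\partial P$, the map $\phi$ never vanishes on $\Int(P)$, and $\phi|_{\Int(P)}=u\circ(\pi|_{\Int(P)})$ is injective because $\pi|_{\Int(P)}$ is a homeomorphism onto $S\setminus G$. The normalization $u(p_\infty)=\infty$ gives $\Phi(\tp_\infty)=0$, and $\Phi'(\tp_\infty)=1$ is built into the choice of $u$.

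Next, I would apply Theorem~\ref{thm:Koebedistn} with $p=\tp_\infty$ and $h=\delta/3$. The hypotheses require $P_{\delta/3}$ to be path connected and to contain $\tp_\infty$; the latter is immediate since $\tp_\infty\in P_{\bh}\sbs P_{\delta/3}$ (note $\delta/3<\bh$), while the former is a direct geometric consequence of the trapezoidal structure of~$\tQ$ from Section~\ref{subsec:cxstruct-foliatedcollar} together with the smallness of $\delta$ enforced by~(\ref{eq:delta}). The theorem then gives that $\Phi$ is $\kappa_0$-biLipschitz (in Euclidean metric) on $P_{\delta/3}$ with
\[\kappa_0=\exp\!\left(\frac{8\diam_{P_{\delta/3}}P_{\delta/3}}{\delta/3}\right)=\exp\!\left(\frac{24\diam_{P_{\delta/3}}P_{\delta/3}}{\delta}\right).\]

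To transfer this Euclidean estimate on $\Phi$ to a spherical one on $\phi$, I would use the elementary chordal inequality $d_\csph(a,b)\le 2|1/a-1/b|$ for $a,b\in\csph$, which follows from $d_\csph(a,b)=2|a-b|/\sqrt{(1+|a|^2)(1+|b|^2)}$ together with the invariance of $d_\csph$ under $z\mapsto 1/z$. Combined with the biLipschitz bound on $\Phi$ and the trivial inequality $|\tq-\tx|\le d_P(\tq,\tx)$, this yields
\[d_\csph(\phi(\tq),\phi(\tx))\le 2|\Phi(\tq)-\Phi(\tx)|\le 2\kappa_0\,d_P(\tq,\tx),\]
so $\phi$ is Lipschitz on $P_{\delta/3}$ with constant $\kappa=2\kappa_0$. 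The remaining ingredient is the crude estimate $\diam_{P_{\delta/3}}P_{\delta/3}\le 2|\partial P|$, obtained by connecting any two points of $P_{\delta/3}$ via a path that first moves vertically in the collaring onto the horizontal leaf $\hor(\delta/3)$ and then runs along that leaf, whose total length is at most $2|\partial P|$ by condition~a) of the construction of~$\tQ$. This produces the bound $\kappa\le 2\exp(48|\partial P|/\delta)$ stated in the lemma. The main obstacle is simply the bookkeeping for the two geometric claims about $P_{\delta/3}$ (its path connectedness and the $2|\partial P|$ diameter bound); each is a routine verification from the trapezoidal collar construction of Section~\ref{subsec:cxstruct-foliatedcollar}.
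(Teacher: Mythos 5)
Your proposal is exactly the route the paper intends: it states the lemma with no written proof beyond the preceding remark that it follows by applying Theorem~\ref{thm:Koebedistn} to the reciprocal $\Phi=1/\phi$, normalized at $\tp_\infty$, with $h=\delta/3$, and your bookkeeping (the factor $24=3\cdot 8$, the factor $2$ from passing from the Euclidean to the spherical metric, and the bound $\diam_{P_{\delta/3}}P_{\delta/3}\le 2|\partial P|$ giving $48|\partial P|/\delta$) matches the constants in the statement. The geometric verifications you defer (path connectedness of $P_{\delta/3}$ and the diameter estimate via the leaf $\thor(\delta/3)$, whose length is at most $2|\partial P|$ by condition~a) of the collar construction) are likewise left implicit by the paper, so your argument is correct and essentially identical to the intended one.
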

\qed

\subsection{Modulus of continuity at a point}
\label{sec:modul-cont-at-point}

\begin{defns}[$\xi$, $\lambda$, $\eta$, $\alpha$, $\beta$]
Define functions $\xi, \lambda, \eta\co Q(\delta/2)\times
[0,\delta/2)\to[0,\infty)$, and $\alpha,\beta\co Q(\delta/2)\to[0,\infty)$ by
\begin{eqnarray*}
\xi(q;t) &=& \max(h_q, t),\\
\lambda(q;t) &=& \min(\mu(q; \xi(q;t)),\,d_q),\\
\eta(q;t) &=& \max(\mu(q; \xi(q;t)),\,d_q),\\
\alpha(q) &=& \min(2d_q, \br), \text{ and }\\
\beta(q) &=& \max(2d_q, \br).
\end{eqnarray*}
\end{defns}

The aim of this section is to prove the following theorem, which gives
a modulus of continuity for~$u$ at each point~$q\in Q(\delta/2)$. The
expression~(\ref{eq:local-mod-cont}) giving this modulus of continuity
is difficult to parse: nevertheless it is amenable to explicit
calculation in examples such as Example~\ref{ex:q-singular} below.

\begin{thm}
\label{thm:local-modulus-continuity}

Let~$\rho\co Q(\delta/2)\times[0,\delta/2)\to[0,\infty)$ be defined by
    $\rho(q;t):=\rho_q(t)=0$ if $t=0$ and, if $t>0$,
\begin{equation}
\label{eq:local-mod-cont}
\rho_q(t) :=
\dfrac{8Rt}
{\displaystyle{\xi(q;t)\cdot\exp \left(
2\pi\int_{\lambda(q;t)}^{\alpha(q)/2}\iota(\psi(q);s)\,\rmd s +
2\pi\int_{d_q+\eta(q;t)}^{\beta(q)} \iota\SL(p;s)\,\rmd s
\right)}}
\end{equation}
where~$p$ is a point of~$\Lambda$ with $d_G(\psi(q),p)=d_q$, and~$R$
is the constant provided by Lemma~\ref{lem:Koebe}. Then, for
every~$q\in Q(\delta/2)$, $\rho_q$ is a modulus of continuity for
$u\co S\setminus\{p_\infty\}\to\C$ at~$q$ with respect to the
metric~$d_S$ on~$S$ and the Euclidean metric on~$\C$.
\end{thm}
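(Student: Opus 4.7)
The overall plan is a standard modulus-of-continuity argument via the Gr\"otzsch Annulus Theorem. For fixed $q\in Q(\delta/2)$ and $x\in S\setminus\{p_\infty\}$ with $t:=d_S(q,x)\in(0,\delta/2)$, I will construct an annular region $A$ in $S$ whose bounded complementary component contains both $q$ and $x$, obtain a lower bound
\[
\mod A \;\ge\; \frac{1}{2\pi}\ln\frac{\xi(q;t)}{t} \;+\; I_1 \;+\; I_2,
\]
where $I_1$, $I_2$ are the two integrals in~(\ref{eq:local-mod-cont}), and push the annulus forward by~$u$. By Lemma~\ref{lem:Koebe}, $u(A)\subset\{|z|<R\}$, so Theorem~\ref{thm:GAT} yields $\tfrac{1}{2\pi}\ln(8R/|u(q)-u(x)|)\ge\mod u(A)=\mod A$. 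Rearranging and substituting the lower bound reproduces exactly $|u(q)-u(x)|\le\rho_q(t)$; the case $t=0$ is trivial.

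The annulus $A$ is assembled, via additivity of the module (Theorem~\ref{thm:additivity-module}), from three nested concentric sub-annuli. The innermost piece is a round Euclidean annulus $A_0=\{z\in S\setminus G:t<d_S(q,z)<h_q\}$ of module $\tfrac{1}{2\pi}\ln(h_q/t)=\tfrac{1}{2\pi}\ln(\xi(q;t)/t)$, present only when $h_q>t$ and obtained by pulling back a round annulus in $\Int(P)$ centred at $\tq=\pi^{-1}(q)$; when $h_q\le t$ this piece is absent and the logarithmic term vanishes. The middle annulus $A_1=\Ann(\psi(q);\lambda(q;t),\alpha(q)/2)$ is built by applying the construction of Section~\ref{subsec:cxstruct-annuli} to the non-singular single-point ``set'' $\{\psi(q)\}$, and~(\ref{eq:standard-mod-lower-bound}) gives $\mod A_1\ge I_1$. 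The outermost annulus $A_2=\AnnL(p;d_q+\eta(q;t),\beta(q))$ is built around a singular point $p\in\Lambda$ realising $d_G(\psi(q),p)=d_q$, and Theorem~\ref{thm:module-bound} itself gives $\mod A_2\ge I_2$. When a nominal endpoint radius fails to be planar, Lemma~\ref{lem:generalplanarradius} supplies arbitrarily small perturbations whose effect is absorbed in the limit.

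The main obstacle is verifying that $A_0$, $A_1$, $A_2$ are genuinely nested and concentric in every regime. The inclusions needed are $A_0\subset\overline{B}_S(q;\xi(q;t))\subset D(\psi(q);\mu(q;\xi(q;t)))$ (or its $\DL$-analogue when $\psi(q)\in\Lambda$), supplied by Lemma~\ref{lem:technical}; that this last disk sits inside the bounded complementary component $D(\psi(q);\lambda(q;t))$ of $A_1$; and that the bounded complementary component of $A_1$ sits inside the bounded complementary component $\DL(p;d_q+\eta(q;t))$ of $A_2$. The asymmetric definitions $\lambda=\min(\mu(q;\xi(q;t)),d_q)$, $\eta=\max(\mu(q;\xi(q;t)),d_q)$, $\alpha=\min(2d_q,\br)$, $\beta=\max(2d_q,\br)$ are precisely those which ensure, in a four-way case split on whether $\mu(q;\xi(q;t))\le d_q$ and whether $2d_q\le\br$, that each of $A_1$, $A_2$ is either a nondegenerate annulus with the correct nesting or a degenerate one (inner radius at least outer) whose associated integral is nonpositive and may be replaced by zero. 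The base-point switch from $\psi(q)$ to $p$ needed when $A_1$ degenerates follows from the triangle-inequality inclusion $\operatorname{CB}(\psi(q);r)\subset\CB(p;r+d_q)$ in $G$, together with monotonicity of the collar heights $h(r)=\tfrac{\bh}{2\br}r$ used in the disk constructions. Carrying out this regime-by-regime bookkeeping and checking the three containments in each regime is where the bulk of the work lies; once it is complete, additivity of the module and the Gr\"otzsch inequality combine immediately to produce $|u(q)-u(x)|\le\rho_q(t)$.
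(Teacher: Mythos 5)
Your construction is essentially the paper's own argument: the same chain of (up to three) nested concentric annuli --- the round annulus $B_S(q;h_q)\setminus\ol{B}_S(q;t)$, the annulus $\Ann(\psi(q);\cdot,\cdot)$ about the non-singular point $\psi(q)$, and $\AnnL(p;\cdot,\cdot)$ about a nearest singular point $p$ --- with the same ingredients (Lemma~\ref{lem:technical} for the containments, (\ref{eq:standard-mod-lower-bound}) and Theorem~\ref{thm:module-bound}) for the lower bounds, additivity of the module, Lemma~\ref{lem:Koebe} plus Theorem~\ref{thm:GAT} for the upper bound, and perturbation of non-planar radii. The paper simply organizes the regime bookkeeping you describe as an explicit case split (A, B, C with subcases on $t$ versus $h_q$ and $\tfrac{2\delta}{\br}d_q-h_q$), in which the min/max definitions of $\lambda,\eta,\alpha,\beta$ make the degenerate integrals have empty range (they are exactly zero, not merely ``nonpositive''); your uniform three-annulus formulation with the base-point switch $\operatorname{CB}(\psi(q);r)\subset\CB(p;r+d_q)$ is the same computation in a different order.

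The one genuine omission is that you only prove the inequality $|u(q)-u(x)|\le\rho_q(d_S(q,x))$, and never verify that $\rho_q$ is a modulus of continuity in the sense of Definition~\ref{defn:mod-cont}: that $\rho_q$ is continuous on $(0,\delta/2)$, strictly increasing, and --- the only non-routine point --- continuous at $0$. The last property is where the standing divergence hypotheses enter: one needs $\lambda(q;t)\to 0$ (resp.\ $d_q+\eta(q;t)\to 0$) as $t\to 0$ together with (\ref{eq:regular-integral-diverges}) at non-singular $\psi(q)$ (resp.\ (\ref{eq:generaldivint1}) at $p\in\Lambda$) to force the relevant integral in the denominator of (\ref{eq:local-mod-cont}) to blow up, and hence $\rho_q(t)\to 0$; when $h_q>0$ it follows instead from $\rho_q(t)$ being proportional to $t$ for small $t$. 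Without this verification the function on the right of (\ref{eq:local-mod-cont}) has not been shown to qualify as a modulus of continuity at all, so you should add this (short) check, as the paper does at the start of its proof.
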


\begin{rmk}
\label{rmk:mod-cont-makes-sense}
This expression makes sense. First, it does not depend on the
particular point~$p\in\Lambda$ with~$d_G(\psi(q),p)=d_q$. For
if~$p'\in\Lambda$ is a second such point, then $d_G(p,p')\le 2d_q$.
Therefore if $s>d_q$, as is the case throughout the range of the
second integral of~(\ref{eq:local-mod-cont}), then
$\CB(p;s)=\CB(p';s)$ by Remark~\ref{rmk:chains-in-Lambda}, and hence
$\CC(p;s)=\CC(p';s)$, $\cm(p;s)=\cm(p';s)$, $\cn(p;s)=\cn(p';s)$, and
finally $\iota\SL(p;s)=\iota\SL(p';s)$.

Second, if $\psi(q)\in\Lambda$ then $\iota(\psi(q);s)$ is not defined:
but in this case~$d_q=0$, so that the range of the first integral
of~(\ref{eq:local-mod-cont}) degenerates to a point.

Notice also that if~$q\in Q(\delta/2)$ and $d_S(q,x)<\delta/2$ then
$x\in Q(\delta)\sbs Q(\bh)$, and hence~$x\not=p_\infty\in S\setminus
Q(\bh)$. There is therefore no loss of generality in restricting the
domain of~$u$ to $S\setminus\{p_\infty\}$.
\end{rmk}

\begin{example}
\label{ex:q-singular}
If $q\in\Lambda$ then $h_q=d_q=0$ and $p=q$, so that $\xi(q;t)=t$,
$\lambda(q;t)=0$, $\eta(q;t)=\mu(q;t)=\br t/2\delta$, $\alpha(q)=0$, and
$\beta(q)=\br$. In this case~(\ref{eq:local-mod-cont}) therefore reduces
to
\[\rho_q(t) = \dfrac{8R}
{\displaystyle
{\exp\left(
2\pi\int_{\br t/2\delta}^{\br} \iota\SL(q;s)\,\rmd s
\right)}
}.
\]
Suppose, for example, that~$q$ is one of the points of the Cantor
singular set of Example~\ref{ex:cantor}. Taking~$\br=1/6$
and~$\bh=1/4$ and using~$|\partial P|=4$ gives~$\delta=1/192$, so that
the modulus of continuity is given by
\[\rho_q(t) = \dfrac{8R}
{\displaystyle
{\exp\left(
2\pi \int_{16t}^{1/6} \iota\SL(q;s)\,\rmd s
\right)}
}
\]
for $0\le t < 1/384$. Now, by the calculations of
Example~\ref{ex:cantor}, 
\[\iota\SL(q;s) \ge \frac{M}{\frac{1}{3^k}+7s} \qquad \text{when
}\quad\frac{1}{2\cdot 3^{k+2}} < s \le \frac{1}{2\cdot 3^{k+1}},\]
where~$M=\frac{1}{5}\min(\br/\bh,\bh/\br)=2/15$. This gives
\[\rho_q(t) \le C t^{\left(
\frac{4\pi\ln(39/25)}{105\ln 3}
\right)} \le C t^{1/21}
\]
as a modulus of continuity at any point~$q\in\cV^s$, for $0\le
t<1/384$. 
\end{example}

\begin{proof}[Proof of Theorem~\ref{thm:local-modulus-continuity}]

Let~$q\in Q(\delta/2)$. Observe first
that~$\rho_q\co[0,\delta/2)\to[0,\infty)$ is a modulus of continuity
    in the sense of Definition~\ref{defn:mod-cont}. For
\begin{itemize}
\item It is continuous in~$(0,\delta/2)$ since the functions~$t\mapsto
  \mu(q;t)$, $t\mapsto \xi(q;t)$, $t\mapsto \lambda(q;t)$, and
  $t\mapsto \eta(q;t)$ are all continuous, and $\xi(q;t)$ is non-zero
  in $t>0$.
\item It is strictly increasing since:
\begin{enumerate}[a)]
\item if $t<h_q$ then~$\xi(q;t)=h_q$, so that $\lambda(q;t)$ and
  $\eta(q;t)$ are independent of~$t$, and~$\rho_q(t)$ is
  proportional to~$t$; and
\item if $t\ge h_q$ then~$\xi(q;t)=t$ (cancelling the~$t$ in the
  numerator), so that $\mu(q; \xi(q;t))=\mu(q;t)$ is strictly
  increasing in~$t$. Hence $\lambda(q;t)$ and $\eta(q;t)$ are
  increasing, and one of them is strictly increasing. Since the
  integrands are positive except on a set of measure zero, the two
  integrals are decreasing and one of them is strictly decreasing.
\end{enumerate}
\item It is continuous at~$0$ since:
\begin{enumerate}[a)]
\item If~$h_q=0$ (i.e. $q\in G$) then $\xi(q;t)=t\to 0$ as $t\to 0$,
  so that $\mu(q; \xi(q;t))\to 0$ as $t\to 0$. If $d_q>0$ (so $q\in
  G\setminus\Lambda$ and $\alpha(q)>0$) then~$\lambda(q;t)\to 0$ as
  $t\to 0$, and the first integral in the denominator diverges
  as~$t\to 0$ by~(\ref{eq:regular-integral-diverges}). On the other
  hand, if~$d_q=0$ (so that $q\in\Lambda$) then $\eta(q;t)\to 0$ as
  $t\to 0$, and the second integral in the denominator diverges as
  $t\to 0$ by~(\ref{eq:generaldivint1}).
\item If~$h_q>0$ then $\rho_q(t)$ is proportional to~$t$
  for~$0<t<h_q$, so $\rho_q(t)\to 0$ as $t\to 0$.
\end{enumerate}
\end{itemize}

Now let~$x\in S$ with $d_S(q,x)=t<\delta/2$. It is required to show
that $|u(q)-u(x)|\le\rho_q(t)$.

The proof splits into three cases depending on whether $\psi(q)$ is
far from~$\Lambda$ (case A, $d_q\ge\br/2$); $\psi(q)$ is close
to~$\Lambda$ and $h_q$ is large compared to~$d_q$ (case B,
$d_q\le\br/2$ and $h_q\ge\frac{\delta}{\br}d_q$); or $\psi(q)$ is
close to~$\Lambda$ and $h_q$ is small compared to~$d_q$ (case C,
$d_q\le\br/2$ and $h_q\le\frac{\delta}{\br}d_q$). See
Figure~\ref{fig:cases}.

\begin{figure}[htbp]
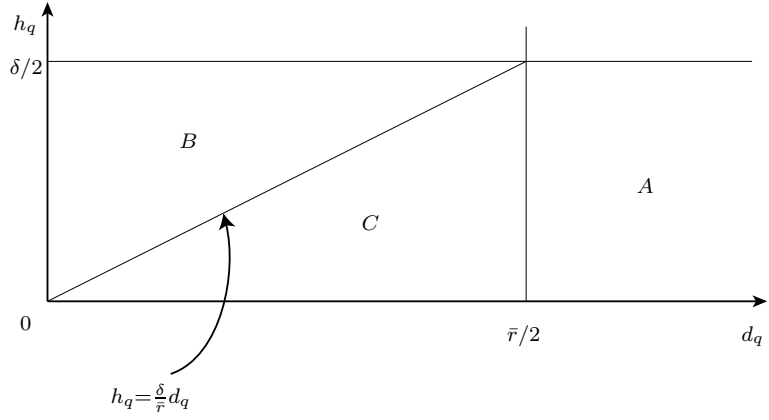

\lab{0}{0}{}
\lab{1}{\br/2}{}
\lab{2}{d_q}{}
\lab{3}{\delta/2}{}
\lab{4}{h_q}{}
\lab{5}{h_q=\frac{\delta}{\br}d_q}{}
\lab{A}{A}{}
\lab{B}{B}{}
\lab{C}{C}{}
\begin{center}
\pichere{0.6}{cases}
\end{center}
\caption{The three cases in the proof of Theorem~\ref{thm:local-modulus-continuity}}
\label{fig:cases}
\end{figure}

Case~C is more complicated than the other two, and is treated in
detail. The proofs in cases~A and~B are similar, and are only
sketched. 

\medskip\medskip

\noindent{\bf Case C: }$d_q\le\br/2$ and $h_q\le
\frac{\delta}{\br}d_q$.

In this case~$\alpha(q)=2d_q$ and $\beta(q)=\br$. Notice that $0\le h_q \le
\frac{2\delta}{\br}d_q-h_q$. There are three subcases to consider:
$t\ge \frac{2\delta}{\br}d_q-h_q$, $h_q\le
t\le\frac{2\delta}{\br}d_q-h_q$, and $t\le h_q$.
\begin{enumerate}[(C1)]
\item Suppose first that~$\frac{2\delta}{\br}d_q-h_q \le
  t$. Then~$\xi(q;t)=t$, and
  $\mu(q;\xi(q;t))=\frac{\br}{2\delta}(t+h_q) \ge d_q$. Therefore
  $\lambda(q;t)=d_q$ (so that the first integral in the denominator
  of~(\ref{eq:local-mod-cont}) vanishes)
   and $\eta(q;t)=\mu(q;t)$. Thus~(\ref{eq:local-mod-cont}) reduces to
\[\rho_q(t) = \dfrac{8R}{\displaystyle
{\exp\left(
2\pi\int_{d_q+\mu(q;t)}^{\br}\iota\SL(p;s)\,\rmd s
\right)}
}.
\]

Assume to start with that both $d_q+\mu(q;t)$ and $\br$ are
$(p,\Lambda)$ planar radii. Now
\[x\in\ol{B}_S(q;t) \sbs \DL(p; d_q+\mu(q;t))\]
by Lemma~\ref{lem:technical}: for if $\psi(q) \not\in\Lambda$ then
$\ol{B}_S(q;t) \sbs D(\psi(q);\mu(q;t)) \sbs \DL(p; d_q+\mu(q;t))$;
while if $\psi(q)\in\Lambda$ then $\ol{B}_S(q;t) \sbs \DL(\psi(q);
\mu(q;t)) = \DL(p; d_q+\mu(q;t)$, since $d_q=0$ and
$\psi(q)=p$. Therefore both~$q$ and~$x$ lie in the bounded
complementary component of $\AnnL(p;d_q+\mu(q;t), \br)$ (i.e. the
complementary component which is contained in~$Q(\delta)$). See
Figure~\ref{fig:C1}.

\begin{figure}[htbp]
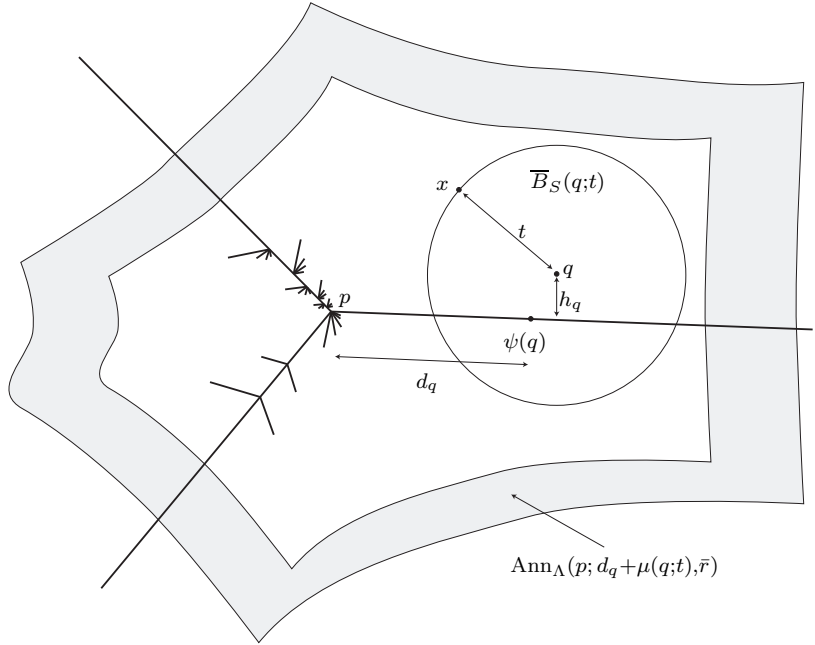

\lab{x}{x}{}
\lab{t}{t}{}
\lab{h}{h_q}{}
\lab{k}{\psi(q)}{}
\lab{q}{q}{}
\lab{p}{p}{}
\lab{d}{d_q}{}
\lab{A}{\AnnL(p;\, d_q+\mu(q;t), \br)}{}
\lab{B}{\ol{B}_S(q;t)}{}
\begin{center}
\pichere{0.65}{C1-2}
\end{center}
\caption{The annulus of case~(C1)}
\label{fig:C1}
\end{figure}

By Lemma~\ref{lem:Koebe}, the image $u(\AnnL(p;d_q+\mu(q;t), \br))$
separates~$u(q)$ and~$u(x)$ from the circle $|z|=R$. Applying
Theorem~\ref{thm:module-bound} and Theorem~\ref{thm:GAT} gives
\begin{eqnarray*}
\int_{d_q+\mu(q;t)}^{\br}\iota\SL(p;s)\,\rmd s &\le&
\mod\AnnL(p;d_q+\mu(q;t),\br) \\
&=& \mod u(\AnnL(p;d_q+\mu(q;t),\br)) \\
&\le& \frac{1}{2\pi} \ln \frac{8R}{|u(q)-u(x)|},
\end{eqnarray*}
from which the required inequality $|u(q)-u(x)|\le\rho_q(t)$ follows.

If either or both of $d_q+\mu(q;t)$ and $\br$ are not
$(p,\Lambda)$-planar, then increase $d_q+\mu(q;t)$ and decrease $\br$
by arbitrarily small amounts to $(p,\Lambda)$-planar radii. This
increases the upper bound on~$|u(q)-u(x)|$, but since it does so by an
arbitrarily small amount, the upper bound as stated remains valid. In
the remaining cases of the proof, this finessing of the possibility
that relevant radii are not planar will be carried out without
comment. 

\item If $h_q\le t\le \frac{2\delta}{\br}d_q-h_q$ then $\xi(q;t)=t$
  and $\mu(q;\xi(q;t))=\frac{\br}{2\delta}(t+h_q) \le d_q$. Therefore
  $\lambda(q;t)=\mu(q;t)$ and $\eta(q;t)=d_q$, so
  that~(\ref{eq:local-mod-cont}) becomes
\[\rho_q(t) = \dfrac{8R}{\displaystyle
{
\exp\left(
2\pi\int_{\mu(q;t)}^{d_q}\iota(\psi(q);s)\,\rmd s + 
2\pi\int_{2d_q}^{\br}\iota\SL(p;s)\,\rmd s
\right)
}
}.
\]
By Lemma~\ref{lem:technical}, $x\in \ol{B}_S(q;t) \sbs
D(\psi(q);\mu(q;t))$ so that $\Ann(\psi(q);\mu(q;t), d_q)$
separates~$q$ and~$x$ from $\partial Q(\delta)$. This annulus is
itself nested in the annulus $\AnnL(p; 2d_q, \br)$, since $D(\psi(q);
d_q) \sbs \DL(p; 2d_q)$: see Figure~\ref{fig:C2}

\begin{figure}[htbp]
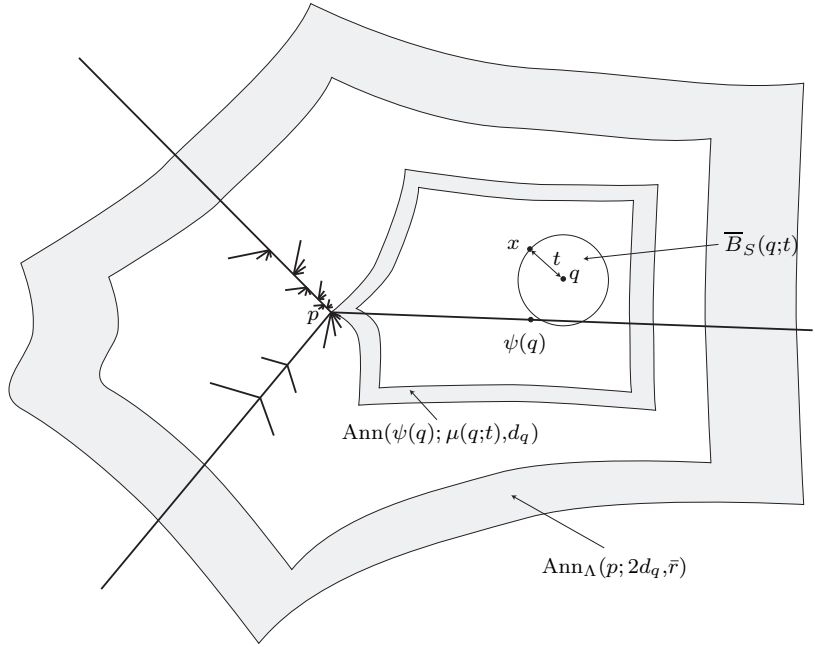

\lab{x}{x}{}
\lab{t}{t}{}
\lab{k}{\psi(q)}{}
\lab{q}{q}{}
\lab{p}{p}{}
\lab{A}{\AnnL(p;\, 2d_q, \br)}{}
\lab{B}{\ol{B}_S(q;t)}{}
\lab{C}{\Ann(\psi(q);\,\mu(q;t), d_q)}{}
\begin{center}
\pichere{0.65}{C2-2}
\end{center}
\caption{The two annuli of case~(C2)}
\label{fig:C2}
\end{figure}

Therefore the annulus~$A$ with inner
boundary~$\partial D(\psi(q);\mu(q;t))$ and outer boundary
$\partial\DL(p;\br)$ satisfies (using
(\ref{eq:standard-mod-lower-bound}) and
Theorems~\ref{thm:module-bound} and~\ref{thm:GAT})
\begin{eqnarray*}
\int_{\mu(q;t)}^{d_q}\iota(\psi(q);s)\,\rmd s + 
\int_{2d_q}^{\br}\iota\SL(p;s)\,\rmd s &\le& \mod A \\
&=& \mod u(A) \\
&\le&
\frac{1}{2\pi}\ln\frac{8R}{|u(q)-u(x)|},
\end{eqnarray*}
and the required inequality follows.
\item Finally, if $0<t\le h_q$ then $\xi(q;t)=h_q$,
  $\lambda(q;t)=\mu(q;h_q)$ and
  $\eta(q;r)=d_q$. Therefore~(\ref{eq:local-mod-cont}) becomes
\[\rho_q(t) = \dfrac{8Rt}{\displaystyle
{
h_q\cdot \exp\left(
2\pi\int_{\mu(q;h_q)}^{d_q}\iota(\psi(q);s)\,\rmd s + 
2\pi\int_{2d_q}^{\br}\iota\SL(p;s)\,\rmd s
\right)
}
}.
\]
By Lemma~\ref{lem:technical}, $\ol{B}_S(q;h_q) \sbs D(\psi(q);
\mu(q;h_q))$, so the ``round'' annulus
$A=B_S(q;h_q)\setminus \ol{B}_S(q;t)$ is nested inside
$\Ann(\psi(q);\mu(q;h_q), d_q)$, which in turn is nested inside
$\AnnL(p; 2d_q, \br)$: see Figure~\ref{fig:C3}.

\begin{figure}[htbp]
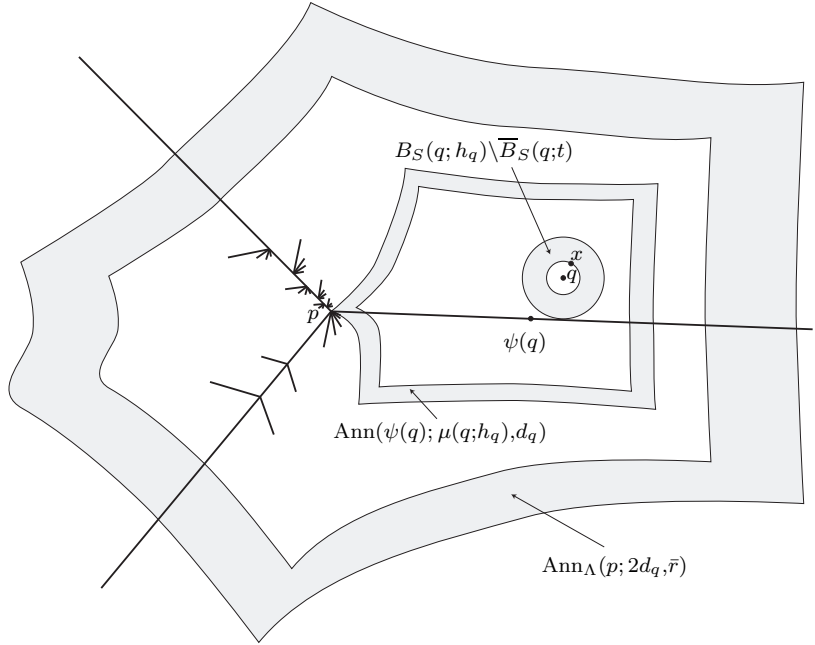

\lab{x}{x}{}
\lab{k}{\psi(q)}{}
\lab{q}{q}{}
\lab{p}{p}{}
\lab{A}{\AnnL(p;\, 2d_q, \br)}{}
\lab{B}{B_S(q;\,h_q)\setminus \ol{B}_S(q;t)}{}
\lab{C}{\Ann(\psi(q);\,\mu(q;h_q), d_q)}{}
\begin{center}
\pichere{0.65}{C3-2}
\end{center}
\caption{The three annuli of case~(C3)}
\label{fig:C3}
\end{figure}

Since~$A$ is contained entirely in~$S\setminus
G$ and is therefore conformally equivalent to the standard annulus
$A(t,h_q)$, it has modulus
$\frac{1}{2\pi}\ln\frac{h_q}{t}$. Applying~(\ref{eq:standard-mod-lower-bound})
and Theorems~\ref{thm:module-bound} and~\ref{thm:GAT} therefore gives
\[\frac{1}{2\pi}\ln\frac{h_q}{t} + \int_{\mu(q;h_q)}^{d_q}\iota(\psi(q);s)\,\rmd s + 
\int_{2d_q}^{\br}\iota\SL(p;s)\,\rmd s \le
\frac{1}{2\pi}\ln\frac{8R}{|u(q)-u(x)|},\]
and the required inequality follows.

\medskip\medskip

\noindent{\bf Case A: }$d_q\ge\br/2$.

In this case $\mu(q;t)=\frac{\br}{2\delta}(t+h_q) \le
\frac{\br}{2}\le d_q$ for all $(q;t)\in
Q(\delta/2)\times[0,\delta/2)$, so that $\lambda(q;t)=\mu(q; \xi(q;t))$
  and $\eta(q;t)=d_q$. Moreover~$\alpha(q)=\br$ and~$\beta(q)=2d_q$: in
  particular, the second integral in the denominator
  of~(\ref{eq:local-mod-cont}) vanishes.

There are two subcases to consider. 

\begin{enumerate}[({A}1)]
\item If $h_q\le t$ then the result comes from
  considering the annulus $\Ann(\psi(q);\mu(q,t), \br/2)$.
\item If $0<t\le h_q$ then the result comes from considering the
  annulus $\Ann(\psi(q); \mu(q;h_q), \br/2)$ together with the
  ``round'' annulus $B_S(q;h_q)\setminus\ol{B}(q;t)$, which is nested
  inside it.
\end{enumerate}

\medskip\medskip

\noindent{\bf Case B: }$d_q\le\br/2$ and $h_q\ge
\frac{\delta}{\br}d_q$. 

In this case $\mu(q;\xi(q;t))\ge \mu(q;h_q)=\frac{\br}{\delta}h_q\ge
d_q$, so that $\lambda(q;t)=d_q$ and
$\eta(q;t)=\mu(q;\xi(q;t))$. Moreover~$\alpha(q)=2d_q$ and $\beta(q)=\br$: in
particular, the first integral in the denominator
of~(\ref{eq:local-mod-cont}) vanishes.

There are two subcases to consider.

\begin{enumerate}[(B1)]
\item If $h_q\le t$ then the result comes from considering the annulus
  $\AnnL(p;d_q+\mu(q;t), \br)$ (the expression for~$\rho_q(t)$ in this
  case is identical to that of case~(C1)).
\item If $0<t\le h_q$ then the result comes from considering the
  annulus $\AnnL(p; d_q+\mu(q;h_q), \br)$ together with the ``round''
  annulus $B_S(q;h_q)\setminus \ol{B}(q;t)$ which is nested inside it.
\end{enumerate}

\end{enumerate}

\end{proof}

\subsection{Global modulus of continuity}
\label{sec:glob-modul-cont}

In order to construct a global modulus of continuity from the local
moduli of continuity of the previous section, it is first necessary to
prove that the function $\rho$ is (jointly) continuous on its domain
$Q(\delta/2)\times[0,\delta/2)$, so that $\max_{q\in
    Q(\delta/2)}\rho(q,t)$ is a modulus of continuity
  throughout~$Q(\delta/2)$. Lemma~\ref{lem:cty-basis} provides the
  basis for this argument, which is completed in
  Lemma~\ref{lem:rho-continuous}. This is then combined with the
  modulus of continuity in~$P_{\delta/3}$ provided by
  Lemma~\ref{lem:interior-mod-cont} to give the required global
  modulus (Theorem~\ref{thm:mod-cont}).

\begin{lem}
\label{lem:cty-basis}
Let~$I_1\co G\setminus\Lambda\times(0,\br)^2 \to\R$ and
$I_2\co\Lambda\times(0,\br)\to(0,\infty)$ be defined by
\begin{eqnarray*}
I_1\qrt &=&\int_r^t\iota\qs\,\rmd s, \quad\text{ and} \\
I_2\qr &=& \int_r^{\br} \iota\SL\qs\,\rmd s.
\end{eqnarray*}
Then
\begin{enumerate}[a)]
\item $I_1$ is continuous.
\item For all~$K>0$, $q_0\in G\setminus\Lambda$, and $t_0\in(0,\br)$,
  there is some~$\eta>0$ such that if~$q\in B_G(q_0;\eta)$ and
  $r\in(0,\eta)$, then $I_1(q;r,t_0)>K$.
\item For all~$K>0$ and $q_0\in\Lambda$, there is some~$\eta>0$ such
  that if~$q\in\Lambda\cap B_G(q_0;\eta)$ and $r\in(0,\eta)$, then
  $I_2\qr>K$. 
\end{enumerate}

\end{lem}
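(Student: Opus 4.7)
The plan is to prove~(c) directly from the divergence hypothesis~(\ref{eq:generaldivint1}) at $q_0$, to deduce~(b) from~(a) combined with the divergence of $\int_0\iota(q_0;s)\,\rmd s$ at every non-singular point guaranteed by~(\ref{eq:regular-integral-diverges}), and to prove~(a) via dominated convergence. The chief technical obstacle is establishing pointwise almost-everywhere convergence $\iota(q_n;s)\to\iota(q_0;s)$ as $q_n\to q_0$ in $G\setminus\Lambda$, which is needed to apply dominated convergence in~(a).

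For~(c), I would use~(\ref{eq:generaldivint1}) to choose $\eta>0$ small enough that $\int_\eta^{\br}\iota\SL(q_0;s)\,\rmd s>K$. For any $q\in\Lambda\cap B_G(q_0;\eta)$ and $s\ge\eta$ one has $d_G(q,q_0)<\eta\le 2s$, so Remark~\ref{rmk:chains-in-Lambda} gives $\CB(q;s)=\CB(q_0;s)$ and hence $\iota\SL\qs=\iota\SL(q_0;s)$; the claimed inequality then follows from non-negativity of $\iota\SL$. For~(b), I note that $\cV^s$ is closed in $G$ (any accumulation of singular points is itself an accumulation of vertices, hence singular), so there exists $\eta_0>0$ with $B_G(q_0;\eta_0)\sbs G\setminus\Lambda$. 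Using~(\ref{eq:regular-integral-diverges}) I would pick $r_0\in(0,\eta_0)$ with $I_1(q_0;r_0,t_0)>K+1$, then invoke~(a) to shrink to $\eta\in(0,r_0)$ with $|I_1(q;r_0,t_0)-I_1(q_0;r_0,t_0)|<1$ for all $q\in B_G(q_0;\eta)$. Non-negativity of $\iota$ finishes the argument, giving $I_1(q;r,t_0)\ge I_1(q;r_0,t_0)>K$ for any $r\in(0,\eta)$.

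The heart of the proof is~(a). The singleton version of Remark~\ref{rmk:upper-bound} yields the bound $\iota(q;s)\le M/(2s)$, which is integrable on compact subintervals of $(0,\br)$ and dominates $\iota(q_n;\cdot)$ uniformly for $q_n$ in a neighborhood of $q_0$. Splitting
\[
|I_1\qrt-I_1(q_0;r_0,t_0)|\le|I_1\qrt-I_1(q;r_0,t_0)|+|I_1(q;r_0,t_0)-I_1(q_0;r_0,t_0)|,
\]
the first summand is $O(|r-r_0|+|t-t_0|)$ uniformly in $q$ via this bound, and the second vanishes as $q\to q_0$ by dominated convergence, once pointwise almost-everywhere convergence $\iota(q_n;s)\to\iota(q_0;s)$ is established. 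For this, I would apply Lemma~\ref{lem:generalplanarradius} with the singleton $\{q_0\}$ in place of $\Lambda$: the set of $q_0$-planar radii is a full measure open subset of $(0,\br)$. For such an $s$, there are only finitely many points of $G$ at distance exactly $s$ from $q_0$, and each is a planar point sitting in an arc-neighborhood on which $d_G(\cdot,q_0)$ varies affinely through the value $s$. Uniform convergence $d_G(\cdot,q_n)\to d_G(\cdot,q_0)$ from the triangle inequality then yields both $m(q_n;s)\to m(q_0;s)$ (the symmetric difference of balls shrinks into arbitrarily small neighborhoods of the finitely many boundary points, which have $m_G$-measure zero) and $n(q_n;s)=n(q_0;s)$ for large $n$ (each boundary point perturbs along its arc to a unique nearby boundary point of $B_G(q_n;s)$, and no new boundary points can arise in the complement of these arcs). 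Hence $\iota(q_n;s)\to\iota(q_0;s)$, and dominated convergence completes~(a).
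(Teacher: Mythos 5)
Your proposal is correct, and its geometric core is the same as the paper's: both proofs rest on the bound $\iota(q;s)\le M/2s$, on the fact that the ``bad'' radii $\{d_G(q_0,q^*):q^*\in\ol\cV\}$ form a compact set of measure zero, and on the local picture at a good radius (finitely many planar boundary points lying on arcs along which the distance function is affine, so that $n(q;s)$ is locally constant and $m(q;s)$ varies continuously in $(q,s)$); part~c) is essentially identical, both resting on Remark~\ref{rmk:chains-in-Lambda} and the divergence hypothesis~(\ref{eq:generaldivint1}). The packaging differs in two places. For a), the paper never invokes dominated convergence: it excises a finite union $U$ of intervals of total length less than $\veps r_0/2M$ around the bad radii, shows the integrand converges on the complementary set $L$ using the local constancy of $n$ and continuity of $m$, and absorbs the contribution of $U$ via the $M/2r_0$ bound; you instead establish pointwise convergence $\iota(q_n;s)\to\iota(q_0;s)$ at every $q_0$-planar radius (the same local argument, plus the compactness observation that $\{x:|d_G(x,q_0)-s|\le\veps\}$ shrinks into the arc-neighborhoods of the boundary points, which you should state explicitly since it is what rules out ``new'' boundary points) and then apply dominated convergence with dominating function $M/2s$ -- a cleaner analytic wrapper around the same estimate. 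For b), the paper reruns the part-a) construction to get $\int_L\iota(q;s)\,\rmd s>K$ uniformly for $q$ near $q_0$, whereas you simply fix $r_0$ with $I_1(q_0;r_0,t_0)>K+1$, apply the already-proved continuity of a) at $(q_0;r_0,t_0)$, and use $\iota\ge 0$ to pass from $r_0$ to any $r\in(0,\eta)$; this is a genuine shortening, and it is valid because $I_1(q;r,t_0)$ is monotone decreasing in $r$.
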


\begin{proof}
Recall that 
\[\iota\qs = \frac{M}{m(q;s)+s\cdot n(q;s)} \quad\text{ and } \quad
\iota\SL\qs = \frac{M}{\cm\qs+ s\cdot\cn\qs}.\]
By Remark~\ref{rmk:upper-bound}, $\iota\SL\qs$ is bounded above by
$M/2s$, and similarly $\iota\qs$ is bounded above by~$M/2s$.
\begin{enumerate}[a)]
\item Let~$(q_0;r_0,t_0)\in G\setminus\Lambda\times(0,\br)^2$: it will be shown
  that~$I_1$ is continuous at~$(q_0;r_0,t_0)$. Now for $\qrt\in
  G\setminus\Lambda\times(0,\br)^2$, 
\[I\qrt - I(q_0;r_0,t_0) = \int_{r_0}^{t_0}\iota(q;s)\,\rmd s -
\int_{r_0}^{t_0}\iota(q_0;s)\,\rmd s  + \int_{r}^{r_0}\iota(q;s)\,\rmd
s + \int_{t_0}^{t}\iota(q;s)\,\rmd s.\]
The final two integrals converge to zero as $(q;r,t)\to(q_0;r_0,t_0)$,
since~$\iota(q;s)$ is bounded above by $M/r_0$ for $s\ge r_0/2$. Hence
it suffices to prove that, for any~$\veps>0$,
\[
\left|\int_{r_0}^{t_0}\left(
\iota\qs - \iota(q_0;s)
\right)\,\rmd s \right|<\veps
\]
provided that $d_G(q,q_0)$ is sufficiently small.

Let~$B:=[r_0,t_0]\cap \{d_G(q_0,q^*)\,:\,q^*\in\ol\cV\}$. Then (cf. the
proof of Lemma~\ref{lem:generalplanarradius}), $B$ is a compact set
with zero Lebesgue measure, and hence can be covered by a finite
union~$U$ of open intervals with total length less than $\veps
r_0/2M$. Let~$L:=[r_0,t_0]\setminus U$ and let $\delta>0$ be the
minimum distance from a point of~$L$ to a point of~$B$. Then
\[\int_L\iota(q_0;s)\,\rmd s \ge \int_{r_0}^{t_0}\iota(q_0;s)\,\rmd
s - \frac{\veps r_0}{2M} \cdot \frac{M}{2r_0} =
\int_{r_0}^{t_0}\iota(q_0;s)\,\rmd s - \frac{\veps}{4},\]
\[\int_L\iota(q;s)\,\rmd s \ge \int_{r_0}^{t_0}\iota(q;s)\,\rmd s -
\frac{\veps}{4},\] 
and $(d_G(q_0,q^*)-\delta, d_G(q_0,q^*)+\delta)$ is disjoint from~$L$
for all $q^*\in\ol\cV$.

It follows that $\ol{B}_G(q;b)\setminus B_G(q;a)$ is a disjoint union
of $n(q_0;a)$ intervals of length~$b-a$ for each component~$[a,b]$
of~$L$ and each~$q\in B_G(q_0;\delta)$, so that $n(q;s)=n(q_0;a)$ is
constant for $q\in B_G(q_0;\delta)$ and $s\in[a,b]$, and $m(q;s)$ is
continuous on the domain $B_G(q_0;\delta)\times[a,b]$. Therefore if
$d_G(q,q_0)$ is sufficiently small then
\[
\left|
\int_L\iota(q_0;s)\,\rmd s - \int_L\iota(q;s)\,\rmd s
\right| \le \frac{\veps}{2},
\]
and the result follows.

\item Let~$q_0\in G\setminus\Lambda$, $K>0$, and
  $t_0\in(0,\br)$. $\eta\in(0,t_0)$ will be chosen small enough that
  $B_G(q_0;\eta)$ is disjoint
  from~$\Lambda$. Since~(\ref{eq:regular-integral-diverges}) holds
  at~$q_0$, there is some~$\veps>0$ such that
\[\int_\veps^{t_0}\iota(q_0;s)\,\rmd s > 3K.\]
As in part~a), there is a subset~$L$ of~$[\veps,t_0]$ consisting of a
finite union of closed intervals, and a number~$\delta>0$ such that
\[\int_L\iota(q_0;s)\,\rmd s > 2K\]
and $(d_G(q_0,q^*)-\delta, d_G(q_0,q^*)+\delta)$ is disjoint from~$L$
for all $q^*\in\ol\cV$.

Again as in part~a), there is some~$\delta'\in(0,\delta)$ such that
\[\int_L\iota(q;s)\,\rmd s > K\]
provided that~$d_G(q;q_0)<\delta'$. Hence
\[I_1(q;r,t_0) > \int_{\veps}^{t_0}\iota\qs\,\rmd s \ge
\int_L\iota\qs\,\rmd s > K\]
for all~$q$ with $d_G(q,q_0)<\delta'$ and all $r\in(0,\veps)$, as
required. 

\item Let~$q_0\in\Lambda$ and $K>0$. Since~(\ref{eq:generaldivint1})
  holds at~$q_0$, there is some~$\veps>0$ such that
\[\int_\veps^{\br} \iota\SL(q_0;s)\,\rmd s > K.\]
Now if $q\in\Lambda$ with $d_G(q_0,q)<2\veps$ then
$\iota\SL(q;s)=\iota\SL(q_0;s)$ for all~$s\ge\veps$ by
Remark~\ref{rmk:chains-in-Lambda}. Hence if $d_G(q_0,q)<2\veps$ and
$r\in(0,\veps)$ then
\[I_2\qr > \int_\veps^\br \iota\SL\qs\,\rmd s  = \int_\veps^\br
\iota\SL(q_0;s) \,\rmd s > K\]
as required.

\end{enumerate}

\end{proof}

\begin{lem}
\label{lem:rho-continuous}
The function $\rho\co Q(\delta/2)\times[0,\delta/2) \to
   [0,\infty)$ of~(\ref{eq:local-mod-cont}) is continuous.
\end{lem}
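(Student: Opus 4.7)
The plan is to verify joint continuity of $\rho$ on $Q(\delta/2)\times[0,\delta/2)$ by combining joint continuity of the ``geometric'' ingredients of~(\ref{eq:local-mod-cont}) with the estimates provided by Lemma~\ref{lem:cty-basis}. A case analysis separates the interior ($t_0>0$) from the boundary ($t_0=0$), and each is further split according to whether the base point $\psi(q_0)$ lies in~$\Lambda$.

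First I would observe that the ancillary functions $h_q$, $d_q$, $\mu(q;t)$, $\xi(q;t)$, $\lambda(q;t)$, $\eta(q;t)$, $\alpha(q)$, $\beta(q)$ are all jointly continuous. This is immediate from continuity of the retraction $\psi$, of $q\mapsto d_G(\psi(q),\Lambda)$ (which holds because $\Lambda$ is closed), and of $\min$, $\max$, and arithmetic. Moreover $\xi(q;t)\ge t$, so the rational prefactor $8Rt/\xi(q;t)$ is jointly continuous on $Q(\delta/2)\times(0,\delta/2)$ and bounded above by $8R$. Let $J_i(q;t)$ ($i=1,2$) denote the two integrals in the exponent of~(\ref{eq:local-mod-cont}).

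The bulk of the work is continuity at an interior point $(q_0,t_0)$ with $t_0>0$. If $\psi(q_0)\notin\Lambda$, then $d_{q_0}>0$ and the endpoints of $J_1$ are continuous and bounded away from~$0$ on a neighbourhood; adding and subtracting integrals with common integrand and applying Lemma~\ref{lem:cty-basis}(a), together with the crude bound $\iota(q;s)\le M/(2s)$ (Remark~\ref{rmk:upper-bound}) to control small corrections at the moving endpoints, gives continuity of $J_1$. If $\psi(q_0)\in\Lambda$, then $d_{q_0}=0$, so $\lambda(q_0;t_0)=\alpha(q_0)/2=0$ and $J_1(q_0;t_0)=0$; since $\mu(q_0;\xi(q_0;t_0))>0$ while $d_q\to 0$, for $(q,t)$ close enough to $(q_0,t_0)$ one has $\mu(q;\xi(q;t))\ge d_q$, forcing $\lambda(q;t)=d_q=\alpha(q)/2$ and hence $J_1(q;t)=0$ identically on a neighbourhood. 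Continuity of $J_2$ follows along the same lines: its lower endpoint $d_q+\eta(q;t)$ is bounded below by a positive constant near $(q_0,t_0)$, and by Remark~\ref{rmk:chains-in-Lambda} the integrand $\iota\SL(p;s)$ can be replaced on the range of integration by $\iota\SL(p_0;s)$ for a fixed $p_0\in\Lambda$, after which a $\Lambda$-analogue of Lemma~\ref{lem:cty-basis}(a) applies.

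For continuity at a boundary point $(q_0,0)$ I need to show $\rho(q,t)\to 0$. If $h_{q_0}>0$ then $\xi(q;t)\ge h_{q_0}/2$ eventually, so $\rho(q,t)\le 16Rt/h_{q_0}\to 0$ (using $\exp(\cdot)\ge 1$, since both integrals are nonnegative). If $h_{q_0}=0$ the rational factor is only bounded above by $8R$, so divergence of the exponent must do the work. When $d_{q_0}>0$, the upper limit $\alpha(q)/2$ of $J_1$ converges to a positive value while $\lambda(q;t)\to 0$, and Lemma~\ref{lem:cty-basis}(b) forces $J_1(q;t)\to\infty$. When $d_{q_0}=0$, the lower limit $d_q+\eta(q;t)$ of $J_2$ tends to~$0$ and the upper limit tends to $\bar r$; any $p\in\Lambda$ realising $d_q$ satisfies $d_G(q_0,p)\to 0$, so Lemma~\ref{lem:cty-basis}(c) forces $J_2(q;t)\to\infty$. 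In both sub-cases the denominator of $\rho$ blows up while the numerator stays bounded, giving $\rho(q,t)\to 0=\rho(q_0,0)$.

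The hardest step will be the transitional configuration where $\psi(q_0)\in\Lambda$ but $\psi(q)\notin\Lambda$ for nearby~$q$: at $q_0$ the integral $J_1$ is zero because its domain collapses to a point, while for nearby $q$ the domain opens up and the integrand $\iota(\psi(q);s)$ can be large near~$0$. The key observation, already outlined above, is that for $t_0>0$ the lower endpoint $\mu(q;\xi(q;t))$ stays bounded below by a positive constant while $d_q\to 0$, which forces the collapse $\lambda(q;t)=\alpha(q)/2$ to persist on a whole neighbourhood. Once that delicate point is settled, the remainder is routine $\veps$--$\delta$ manipulation with Lemma~\ref{lem:cty-basis} and Remark~\ref{rmk:chains-in-Lambda}.
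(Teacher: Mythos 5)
Your plan is correct and follows essentially the same route as the paper: the same case split ($t_0>0$ versus $t_0=0$, further divided by whether $\psi(q_0)\in\Lambda$, respectively by $h_{q_0}$ and $d_{q_0}$), the same collapse of the first integral near $\Lambda$ via $\lambda(q;t)=d_q=\alpha(q)/2$, the same use of Remark~\ref{rmk:chains-in-Lambda} to freeze the base point of $\iota\SL$, and the same appeals to Lemma~\ref{lem:cty-basis}~a), b), c). The only cosmetic difference is your invocation of a ``$\Lambda$-analogue'' of Lemma~\ref{lem:cty-basis}~a): once the base point is frozen at $p_0$, continuity in the endpoints is immediate from boundedness of $\iota\SL(p_0;\cdot)$ on the integration range, which is how the paper argues.
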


\begin{proof}
Let~$(q_0;t_0)\in Q(\delta/2)\times[0,\delta/2)$: it will be shown
  that~$\rho$ is continuous at~$(q_0;t_0)$.

\medskip\medskip

\noindent\textbf{Case 1: }$t_0>0$.

Notice that in this case $\xi(q;t)>t_0/2$, and hence $\mu(q;\xi(q;t))>
\br t_0/4\delta$ for all~$(q;t)$ sufficiently close to~$(q_0;t_0)$. 
\begin{enumerate}[a)]
\item
Suppose first that $\psi(q_0)\in\Lambda$, so that $d_{q_0}=0$. Then
$\lambda(q;t)=d_q$, $\eta(q;t)=\mu(q;\xi(q;t))$, $\alpha(q)=2d_q$, and
$\beta(q)=\br$ for all~$(q;t)$ sufficiently close to~$(q_0;t_0)$, so that
\[\rho(q;t)=
\dfrac{8Rt}
{\displaystyle
{\xi(q;t)\cdot \exp\left(
2\pi\int_{d_q+\mu(q;\xi(q;t))}^\br \iota\SL(p;s)\,\rmd s
\right)}
},
\]
where~$p\in\Lambda$ satisfies $d_G(p, \psi(q))=d_q$. However if~$q$ is
sufficiently close to~$q_0$ then $\iota\SL(p;s)=\iota\SL(\psi(q_0);s)$
throughout the range of integration, and the continuity of~$\rho$
follows from the continuity of $\mu$, $\xi$, and $d_q$.
\item
Suppose that~$\psi(q_0)\not\in\Lambda$, so
that~$d_{q_0}>0$. Let~$p_0\in\Lambda$ with
$d_G(p_0,q_0)=d_{q_0}$. Pick~$\veps<d_{q_0}$, and suppose that $q\in
Q(\delta/2)$ with $d_G(\psi(q),\psi(q_0))<\veps$: thus
$|d_{q_0}-d_q|<\veps$. Let~$p\in\Lambda$ with $d_G(p,q)=d_q$. Then
$d_G(p,p_0) \le 2(d_{q_0}+\veps)$. Since $s\ge 2(d_{q_0}-\veps)$
throughout the range of integration of the second integral
of~(\ref{eq:local-mod-cont}), $\iota\SL(p_0;s)=\iota\SL(p;s)$
throughout the range of integration for $\veps$ sufficiently small by
Remark~\ref{rmk:chains-in-Lambda}, and this second integral varies
continuously with~$(q;t)$.  The first integral is~$I_1(\psi(q);
\lambda(q;t), \alpha(q)/2)$, which is continuous at~$(q_0;t_0)$ by
Lemma~\ref{lem:cty-basis}~a), since $\lambda(q_0;t_0)>0$.
\end{enumerate}

\medskip\medskip

\noindent\textbf{Case 2: }$t_0=0$.

In this case it is necessary to show that $\rho(q;t)\to 0$ as
$(q;t)\to(q_0;0)$ with~$t>0$. 
\begin{enumerate}[a)]
\item If~$h(q_0)>0$ then $\xi(q;t)=h_q$, and hence $\rho(q;t)\le
  8Rt/h_q$, for all~$(q;t)$ sufficiently close to $(q_0;0)$, and the
  result follows.
\item If~$h(q_0)=0$ and $d_{q_0}>0$ (i.e. $q_0\in G\setminus\Lambda$),
  then $\alpha(q_0)>0$ and, ignoring the second integral in the denominator
  of~(\ref{eq:local-mod-cont}), 
\[\rho(q;t)\le\dfrac
{8R}
{\displaystyle
{\exp\left(2\pi I_1(\psi(q);\lambda(q;t),\alpha(q_0)/4)\right)}
}
\]
for~$(q;t)$ sufficiently close to~$(q_0;0)$, and the result follows by
Lemma~\ref{lem:cty-basis}~b) since $\lambda(q;t)\to 0$ as
$(q;t)\to(q_0;0)$. 
\item If~$h(q_0)=0$ and $d_{q_0}=0$ (i.e. $q_0\in\Lambda$),
  then
\[\rho(q;t)\le\dfrac
{8R}
{\displaystyle
{\exp\left(2\pi I_2(p; d_q+\eta(q;t))\right)}
}
\]
for~$(q;t)$ sufficiently close to~$(q_0;0)$, and the result follows by
Lemma~\ref{lem:cty-basis}~c) since $d_q+\eta(q;t)\to 0$ and $p\to q_0$
as $(q;t)\to(q_0;0)$.
\end{enumerate}
\end{proof}

\begin{thm}
\label{thm:mod-cont}
Let~$(P,\cP)$ be a plain paper-folding scheme and $\Lambda=\cV^s$ be
the singular set. Let~$\br$ be an injectivity radius for~$\Lambda$ and
$\bh$ be a collaring height for~$P$. Suppose that
condition~(\ref{eq:generaldivint1}) holds at every
point~$q\in\Lambda$. Then the uniformizing map $\phi=u\circ\pi\co
P\to\csph$ has a modulus of continuity~$\brho$, with respect to the
Euclidean metric on~$P$ and the spherical metric on~$\csph$, which
depends only on $\br$, $\bh$, $|\partial P|$, and the functions
$\iota\SL\co\Lambda \times(0,\br)\to[0,\infty)$ and $\iota\co
  G\setminus\Lambda\times(0,\br) \to[0,\infty)$.
\end{thm}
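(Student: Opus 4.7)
The plan is to patch together the pointwise moduli of continuity provided by Theorem~\ref{thm:local-modulus-continuity} with the interior Lipschitz estimate of Lemma~\ref{lem:interior-mod-cont} to produce a single global modulus for~$\phi$. The crucial input is the joint continuity of $\rho\co Q(\delta/2)\times[0,\delta/2)\to[0,\infty)$ established in Lemma~\ref{lem:rho-continuous}.

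First I would exploit this continuity together with compactness. The set $\tQ(\delta/2)\sbs P$ is compact, so its image $Q(\delta/2)=\pi(\tQ(\delta/2))$ is a compact subset of~$S$. Hence the function
\[\tilde\rho(t):=\max_{q\in Q(\delta/2)}\rho(q;t)\]
is finite and continuous in~$t$, non-decreasing (as the supremum of increasing functions $\rho_q$), and vanishes at~$0$. Replacing $\tilde\rho(t)$ by $\tilde\rho(t)+t$ if necessary gives a strictly increasing function, and hence a modulus of continuity in the sense of Definition~\ref{defn:mod-cont}. By Remark~\ref{rmk:mod-cont-transfer}, $\tilde\rho$ serves simultaneously as a modulus of continuity for $\phi$ at every point $\tq\in\tQ(\delta/2)$ with respect to the intrinsic metric~$d_P$ on~$P$.

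Next I would cover $P$ by the two regions $\tQ(\delta/2)$ and $P_{\delta/2}$, the latter being contained in $P_{\delta/3}$. On $P_{\delta/3}$, Lemma~\ref{lem:interior-mod-cont} gives a Lipschitz bound $d_\csph(\phi(\tq),\phi(\tx))\le \kappa\, d_P(\tq,\tx)$, with $\kappa$ depending only on $\br$, $\bh$, and $|\partial P|$. For any $\tq,\tx\in P$ with $d_P(\tq,\tx)<\delta/6$, either $\tq\in P_{\delta/2}$ (in which case $\tx\in P_{\delta/3}$ as well and the Lipschitz bound applies to both) or $\tq\in\tQ(\delta/2)$ (in which case $\tilde\rho$ applies since $d_S(\pi(\tq),\pi(\tx))\le d_P(\tq,\tx)<\delta/2$). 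Setting
\[\brho_0(t):=\max\bigl(\tilde\rho(t),\kappa\,t\bigr),\qquad t\in[0,\delta/6),\]
therefore gives a common modulus of continuity for $\phi$ with respect to~$d_P$.

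Finally I would pass from $d_P$ to the Euclidean metric on~$P$. For a polygon~$P$ there is a bi-Lipschitz constant~$C>0$, depending only on the angles and side lengths of~$P$, such that $d_P(\tq,\tx)\le C\, d_\C(\tq,\tx)$ whenever $d_\C(\tq,\tx)$ is sufficiently small. The polygon data are themselves controlled by $|\partial P|$ together with the non-degeneracy conditions used to fix the collaring height~$\bh$, so~$C$ depends only on the data listed in the theorem. Composing, $\brho(t):=\brho_0(Ct)$ is the required global modulus. The main obstacle was verifying joint continuity of $\rho$ at points of the form~$(q;0)$, which was the content of Lemma~\ref{lem:rho-continuous}; once that is available, the remainder of the argument is a case analysis on the cover and careful accounting of constants.
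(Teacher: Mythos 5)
Your core argument is the one the paper uses: joint continuity of $\rho$ (Lemma~\ref{lem:rho-continuous}) plus compactness of $Q(\delta/2)$ gives a single modulus $t\mapsto\max_{q\in Q(\delta/2)}\rho(q;t)$ valid throughout the collar, Remark~\ref{rmk:mod-cont-transfer} transfers it to $\phi$ on $\tQ(\delta/2)$, and taking the maximum with the Lipschitz constant $\kappa$ of Lemma~\ref{lem:interior-mod-cont} on $P_{\delta/3}$ gives the global modulus (your dichotomy, using that the height function is $1$-Lipschitz along paths in the collar, is a slightly more explicit version of what the paper leaves implicit, and your ``$+t$'' fix is unnecessary since a maximum of strictly increasing functions over a compact parameter set is already strictly increasing). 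One detail you drop and the paper keeps: Theorem~\ref{thm:local-modulus-continuity} bounds $|u(q)-u(x)|$ in the Euclidean metric of $\C$, whereas the conclusion is stated for the spherical metric on $\csph$; the paper inserts the factor $2$ in the definition of $\hrho$ to convert, and your $\tilde\rho$ needs the same (trivial, since $d_\csph\le 2|\cdot|$ on $\C$).

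The genuine problem is your last step. You pass from a modulus with respect to the intrinsic metric $d_P$ to one with respect to the chordal (subspace) Euclidean metric via $d_P(\tq,\tx)\le C\,d_\C(\tq,\tx)$, claiming $C$ is controlled by $\br$, $\bh$ and $|\partial P|$. Such a quasiconvexity constant exists for each polygon, but it is \emph{not} controlled by those data: for a non-convex polygon, two points separated by a thin exterior notch are Euclidean-close but $d_P$-far, and the notch width is an exterior quantity -- it is not bounded below by $|\partial P|$, nor by the collaring height $\bh$ (the trapezoids over the two sides flanking the notch lie inside the two arms and stay disjoint however thin the notch is), nor by $\br$, which only sees the metric on the scar. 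So as written this step introduces a dependence on extra geometric data of $P$, contradicting the dependence asserted in the theorem. The paper never takes this step: $P$ carries the intrinsic metric $d_P$ from Definition~\ref{defn:origami}, Lemma~\ref{lem:interior-mod-cont} is itself stated in terms of $d_P$, and the ``Euclidean metric on $P$'' in the statement is to be read as that intrinsic metric induced from $\C$; with that reading your argument, minus the final conversion and with the factor $2$, is exactly the paper's proof.
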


\begin{proof}
Define~$\hrho\co[0,\delta/2)\to [0,\infty)$ by
\[\hrho(t):=2\max_{q\in Q(\delta/2)} \rho(q,t),\]
which is well defined since~$\rho$ is continuous and~$Q(\delta/2)$ is
compact. Then~$\hrho$ is a continuous strictly increasing function
with $\hrho(0)=0$ (since each~$\rho_q$ has these properties and~$\rho$
is continuous), and $\phi$ has modulus of continuity~$\hrho$ on
$\tQ(\delta/2)$ with respect to the Euclidean metric on~$\tQ(\delta)$
and the spherical metric on~$\csph$ (the factor~2 in the definition
of~$\hrho$ arises from the translation from the Euclidean metric
on~$\C$ to the spherical metric on~$\csph\setminus\{\infty\}$). 

On the other hand, $\phi$ is~$\kappa$-Lipschitz in~$P_{\delta/3}$ by
Lemma~\ref{lem:interior-mod-cont}. Hence
$\brho\co[0,\delta)\to[0,\infty)$ defined by
\[\brho(t):=\max\{\hrho(t),\,\kappa t\}\]
is the desired modulus of continuity.
\end{proof}

\bibliography{bib_origami}

\end{document}